\newcommand{\RR}{{\mathbb{R}}}
\newcommand{\p}{\mathrm{p}}
\renewcommand{\gg}{\mathfrak{g}}
\newcommand{\nn}{\mathfrak{n}}
\newcommand{\hh}{\mathfrak{h}}
\newcommand{\kk}{\mathfrak{k}}
\renewcommand{\ll}{\mathfrak{l}}
\newcommand{\mm}{\mathfrak{m}}
\renewcommand{\aa}{\mathfrak{a}}
\newcommand{\uu}{\mathfrak{u}}
\renewcommand{\tt}{\mathfrak{t}}
\newcommand{\pp}{\mathfrak{p}}
\renewcommand{\sl}{\mathfrak{sl}}
\newcommand{\so}{\mathfrak{so}}
\renewcommand{\sp}{\mathfrak{sp}}
\newcommand{\su}{\mathfrak{su}}
\newcommand{\zz}{\mathfrak{z}}
\newcommand{\cC}{\mathcal{C}}
\newcommand{\cJ}{\mathcal{J}}
\newcommand{\cO}{\mathcal{O}}
\newcommand{\cU}{\mathcal{U}}
\newcommand{\R}{\mathbb{R}}
\newcommand{\Z}{\mathbb{Z}}
\newcommand{\C}{\mathbb{C}}
\newcommand{\NN}{\mathrm{N}}
\newcommand{\Sl}{\mathrm{SL}}
\newcommand{\So}{\mathrm{SO}}
\newcommand{\Sp}{\mathrm{Sp}}
\newcommand{\Su}{\mathrm{SU}}
\newcommand{\K}{\mathrm{K}}
\newcommand{\A}{\mathrm{A}}
\renewcommand{\L}{\mathrm{L}}
\newcommand{\TT}{\mathrm{T}}
\newcommand{\U}{\mathrm{U}}
\newcommand{\G}{\mathrm{G}}
\newcommand{\M}{\mathrm{M}}
\newcommand{\W}{\mathrm{W}}
\renewcommand{\H}{\mathrm{H}}
\renewcommand{\Z}{\mathrm{Z}}
\newtheorem{lemma}{{\bf Lemma}}
\newtheorem{theorem}{{\bf Theorem}}
\newtheorem{corollary}{{\bf Corollary}}
\theoremstyle{remark}
\newtheorem{remark}{Remark}
\newtheorem{example}{Example}
\title[Diagonalization of the Toda flow]{Diagonalization of the Toda flow on conjugacy classes of matrices with simple spectrum}
	 \author{David Mart\'inez Torres}
	 \address{Department of Applied Mathematics, ETSAM Section, Universidad
	 	Polit\'ecnica de Madrid,
	 	Avda. Juan de Herrera 4, 28040 Madrid, Spain}
	 \email{df.mtorres@upm.es}
	  \author{Carlos Tomei}
	 \address{Deapartamento de Matemática, PUC-Rio, R. Mq. S. Vicente 225, 22453-900 Rio de Janeiro, Brazil}
	 \email{carlos.tomei@mat.puc-rio.com}
\begin{document}
\maketitle

\begin{abstract} We construct coordinates on conjugacy classes of traceless complex matrices with simple spectrum that diagonalize the non-periodic Toda vector field.  By this we mean that the coordinates, defined on an open and dense neighborhood of any diagonal matrix in the conjugacy class, decouple the Toda vector field into a sum of multiples of the Euler and rotational vector field in $\C$. Using Lie theoretic methods, we extend this construction from $\sl_\C$ to arbitrary complex semisimple Lie algebras and to their real forms.
 \end{abstract}

\section{Introduction}

The (non periodic) Toda vector field was introduced in \cite{T} as the Hamiltonian vector field of the total energy of a physical system: a chain of $n$ particles in the line connected by $n-1$ springs which exert an exponential restoring force. Flascha's change of variables \cite{F} turned it into a vector field in Lax form on Jacobi matrices, thus making methods from matrix theory
available. One can make sense of the Lax pair expression found by Flaschka in larger matrix spaces, and, therefore, define the Toda vector field in more generality.
Our point of departure is the set of traceless complex matrices, in which  the Toda vector field is given by
\begin{equation}\label{eq:Toda}
X'=[X,\pi_\kk X],\quad \mathrm{I}=\pi_\kk+ \pi_{\uu},\quad X\in \mathfrak{sl}_\C,
\end{equation}
where the identity map $\mathrm{I}$ on $\mathfrak{sl}_\C$ is decomposed as the sum of projections
onto traceless skew-Hermitian matrices $\kk$, and traceless upper triangular matrices with real diagonal entries $\uu$.
 Because the expression \eqref{eq:Toda} uses fundamental vector fields of the action
of $\Sl_\C$ on $\sl_\C$ by conjugation, the Toda vector field is tangent to all conjugacy classes of matrices in $\mathfrak{sl}_\C$.  In particular, the flow is  isospectral: $X(t)$ and $X(0)$ have the same spectrum.

Moser's norming constants \cite{Mo} provided a global parametrization of Jacobi matrices that greatly simplified the expression of Toda vector field: Once the spectrum is fixed, it is transformed  in
the radial projection of a diagonal linear vector field onto the positive sector of the $(n-1)$-sphere. Larger matrix domains for the vector field lead naturally  to the search for analogous coordinates. 

Building on ideas developed in \cite{LST1,MT,LMST}, we  construct such coordinates on dense domains of any conjugacy class  with simple spectrum in the matrix algebra $\mathfrak{sl}_\C$ (Jacobi matrices have simple spectrum).
As it  turns out, our results are valid not just for  $\mathfrak{sl}_\C$, but for arbitrary complex semisimple Lie algebras. 

\medskip
We briefly recall how to define the Toda vector field and the counterparts of conjugacy classes of matrices with simple spectrum in this more general Lie algebra setting.  Equation \eqref{eq:Toda} is tied to the Gram-Schmidt factorization in $\Sl_\C$ associated with the standard Hermitian inner product
\[\Sl_\C=\K\A\NN.\]
Here, the factor $\K$ is the special unitary group,
\[\K=\{k\in \Sl_\C\,|\, kk^*=\mathrm{I}\},\]
$\A$ is the group of determinant 1 diagonal matrices with positive real entries and $\NN$ is the group of upper triangular matrices with diagonal entries equal to 1,
\[\A=\left\{\begin{pmatrix} a_1 &  &   & 0 \\
 & & \cdots &   \\
 0 & &    & a_n
  \end{pmatrix}\in \Sl_\R,\,a_i>0\right\}, \, \NN=\left\{\begin{pmatrix} 1 & n_{12} & &   n_{1n} \\ 0 &
  1 &  & n_{2n} \\
  & & \cdots   &  \\
  0 & &   & 1
  \end{pmatrix}\in \Sl_\C\right\}.\]
The product $\U=\A\NN$  is the group of upper triangular
matrices with positive diagonal coefficients. The Lie algebras of the factors yield the direct sum decomposition
\[\mathfrak{sl}_\C=\mathfrak{k}\oplus \aa\oplus \nn=\kk\oplus\uu \]
on which formula \eqref{eq:Toda} relies.
The Gram-Schmidt algorithm provides an example of an Iwasawa factorization of a complex
semisimple
Lie group $\G=\K\A\NN$. In such a factorization  the group $\K$ is a maximally compact subgroup of $\G$, and it
comes as the fixed-point set of an anti-holomorphic group involution. The  Lie group
$\A$ integrates a maximal abelian subalgebra $\aa$ of the -1 eigenspace of the differential of
the involution
at the identity element. The  subgroup $\NN$ integrates the sum of positive root spaces $\nn$, for a chosen root ordering
of the roots of $(\gg,\aa)$; we denote the sum of negative root spaces by $\ll$.

Once an Iwasawa factorization of $\G$ has been fixed,  the ensuing Iwasawa decomposition of $\gg$ allows to define the Toda vector
field on $\gg$ by the very same formula  \eqref{eq:Toda}. All Iwasawa factorizations of $\G$ are conjugate, so the qualitative behavior of all these vector fields is the same. The Toda vector field on $\gg$ is tangent to all the orbits of the adjoint action of $\G$ on $\gg$, in particular to the regular semisimple adjoint orbits. These are the
adjoint orbits of maximal dimension that meet the Cartan subalgebra $\hh=i\aa\oplus \aa$, and they generalize
conjugacy classes of matrices in $\sl_\C$ with simple spectrum.

\medskip

Our main result is that the Toda vector field on a regular semisimple adjoint orbit is obtained by juxtaposing diagonal linear vector fields defined on covering charts.

\begin{theorem}\label{thm:main} Let  $\cO\subset \gg$ be a regular semisimple adjoint orbit. Then around any $\Lambda\in \cO\cap \hh$ there exist local coordinates with image $\ll\times \ll$
that transform the Toda vector field in the  (complete) diagonal linear  vector field.
  \begin{equation}\label{eq:Toda-coordinates}
 (Y,Z)\in \ll\times\ll,\quad (Y',Z')=([Y,\pi_{i\aa}\Lambda],[Z,-\pi_\aa \Lambda]), \quad \mathrm{I}|_\hh=\pi_{i\aa}+\pi_\aa.
  \end{equation}
Moreover, the domain of each  local coordinate is a dense subset of $\cO$, and
the union of these subsets over the finite set $\cO\cap \hh$ covers $\cO$.
\end{theorem}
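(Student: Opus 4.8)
The plan is to integrate the Toda flow explicitly through an Iwasawa-type factorization of $\exp(tX)$ and to read the coordinates off that solution, thereby recasting the matrix (bidiagonal) constructions of \cite{LST1,MT,LMST} in Lie-theoretic form. I would begin from the standard factorization solution of \eqref{eq:Toda}: given $X_0\in\gg$, factor $\exp(tX_0)=k(t)\,u(t)$ with $k(t)\in\K$ and $u(t)\in\U$; then $X(t):=\Ad_{u(t)}X_0=\Ad_{k(t)^{-1}}X_0$ is the Toda trajectory through $X_0$. Indeed, differentiating $\exp(tX_0)=k(t)u(t)$ gives $\Ad_{k^{-1}}X_0=k^{-1}\dot k+\dot u\,u^{-1}$, and since $k^{-1}\dot k\in\kk$, $\dot u\,u^{-1}\in\uu$, these are the $\kk$- and $\uu$-parts of $X$, whence $\dot X=[X,\pi_\kk X]$. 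In particular, if $X_0=\Ad_g\Lambda$ then the Toda orbit is $t\mapsto\Ad_{u(t)g}\Lambda$, where $g\exp(t\Lambda)g^{-1}=k(t)u(t)$.

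The decoupling originates in the following remark. Since $\Lambda\in\hh=i\aa\oplus\aa$, the subgroup $\exp(t\Lambda)$ is a commuting product $\exp(t\,\pi_{i\aa}\Lambda)\,\exp(t\,\pi_{\aa}\Lambda)$ whose first factor lies in the compact torus $\exp(i\aa)\subset\K$ and whose second lies in $\A\subset\U$. Through the factorization above, the rotational factor is therefore routed to the $\K$-side and the dilational factor to the $\U$-side; as $\pi_{i\aa}\Lambda\in i\aa$ acts on the root spaces of $\ll$ by brackets with purely imaginary eigenvalues and $\pi_\aa\Lambda\in\aa$ with real ones, this is exactly the ``sum of Euler and rotational fields'' of \eqref{eq:Toda-coordinates}. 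The task is thus to present each $X\in\cO$ so that these two group factors act, by the adjoint action, on two independent $\ll$-slots.

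To do so I would restrict to the open dense big Gauss cell $\cO^\Lambda\subset\cO$ of those $X$ having a representative $g\in\bN\H\NN$. Using the triangular decomposition $\gg=\ll\oplus\hh\oplus\nn$ and the diffeomorphisms $\exp\colon\ll\to\bN$ and $\exp\colon\nn\to\NN$, each such $X$ equals $\Ad_{\bar n\,n}\Lambda$ for unique $\bar n\in\bN$, $n\in\NN$ (injectivity holds because $\bN$ meets the Borel subgroup $\H\NN$ only in the identity), and since $\dim\bN+\dim\NN=\dim\gg-\dim\hh=\dim\cO$ this identifies $\cO^\Lambda$ with $\ll\times\nn\cong\ll\times\ll$. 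The chart $\Phi_\Lambda\colon\cO^\Lambda\to\ll\times\ll$ is then obtained by \emph{twisting} the naive pair $(\log\bar n,\,\theta\log n)$, where $\theta$ is the Cartan involution; one produces a refined factorization of $g$ modulo $\H$, adapted at once to $\K\U$ and to $\bN\H\NN$, in which the first datum $Y\in\ll$ is moved by the adjoint action of the compact-torus factor of $\exp(t\Lambda)$ and the second datum $Z\in\ll$ by that of the $\A$-factor. Substituting into the factorization solution and using the previous remark, the Toda flow becomes $Y(t)=\Ad_{\exp(-t\,\pi_{i\aa}\Lambda)}Y(0)$, $Z(t)=\Ad_{\exp(t\,\pi_{\aa}\Lambda)}Z(0)$, i.e.\ the complete linear field \eqref{eq:Toda-coordinates}. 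Establishing this refined factorization — that it is well posed on all of $\cO^\Lambda$ and transforms as stated — is the heart of the proof and the step I expect to be hardest: the $\K\U$ and $\bN\H\NN$ decompositions are mutually incompatible and the relevant projections (starting with $\pi_\kk$) are merely $\R$-linear, so there is no soft normal-form shortcut — resonances among the eigenvalues of the two linear fields in \eqref{eq:Toda-coordinates} occur in general.

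Finally, the covering. The set $\cO\cap\hh$ is a single orbit of the Weyl group, hence finite since $\Lambda$ is regular, and running the construction at each $\Lambda'\in\cO\cap\hh$ produces charts with open dense domains $\cO^{\Lambda'}$. Their union is all of $\cO$: a point $X\in\cO$ fails to lie in $\cO^{\Lambda'}$ only when a Gauss-type genericity condition degenerates, and for $X$ regular one can always avoid that degeneration by relabelling, i.e.\ by replacing $\Lambda'$ with a suitable Weyl translate. For $\gg=\sl_\C$ this is the statement — proved in \cite{LST1,MT} — that after a permutation of its eigenvalue labels every matrix in $\cO$ is in generic position for the $\mathrm{LU}$-type factorization; the argument carries over to the general semisimple case.
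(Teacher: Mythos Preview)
Your overall plan is right and matches the paper's: Symes' factorization of the flow, the commuting split $\exp(t\Lambda)=\exp(t\,\pi_{i\aa}\Lambda)\exp(t\,\pi_\aa\Lambda)$ routing the torus factor to $\K$ and the dilation to $\U$, the big-cell domain, and the Bruhat/Weyl covering are precisely the paper's ingredients. Your parametrization of $\cO^\Lambda$ by $\L\times\NN$ via $(\bar n,n)\mapsto\Ad_{\bar n\, n}\Lambda$ is well posed and produces the correct open dense set (the paper's $\cU$).

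The genuine gap is the step you yourself flag as hardest: you have not actually defined the chart, and the Bruhat pair $(\log\bar n,\theta\log n)$ does \emph{not} linearize the flow --- under the Symes evolution each slot sees both factors of $\exp(t\Lambda)$, and no unspecified ``twist'' fixes this. The paper's construction is concrete and different. It introduces two submanifolds $\cC=\kappa(\L)\subset\K$ and $\mathcal{D}=\nu(\L)\subset\U$ (the images of $\L$ under the two Iwasawa projections), shows each is diffeomorphic to $\L$, and writes the \emph{same} $X$ in two ways,
\[
X=(\Lambda+Y)^{k^{-1}},\ k\in\cC,\qquad X=(\Lambda+Z)^{u},\ u\in\mathcal{D},
\]
with $\Lambda+Y$ and $\Lambda+Z$ two (in general distinct) points of the $\L$-orbit $\Lambda+\ll$. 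The point of the slice condition $k\in\cC$ is that then $k$ itself factors as $k=l'u'$ with $l'\in\L$, so the Symes computation reads
\[
k\exp(tX)=\exp\bigl(t(\Lambda+Y)\bigr)\,k=\exp(t\Lambda)\,l_1(t)\,l'\,u'
\]
(using that $\H$ normalizes $\L$), and now the $\A$-part $\exp(t\,\pi_\aa\Lambda)$ can be pushed through $l_1(t)l'$ into $\U$, leaving only the torus part on the $\K$-side; hence $Y(t)=\Ad_{\exp(-t\,\pi_{i\aa}\Lambda)}Y$. The mirror argument with the slice $u\in\mathcal{D}$ (so $u=k''l''$ with $l''\in\L$) isolates the $\A$-factor and gives $Z(t)=\Ad_{\exp(t\,\pi_\aa\Lambda)}Z$. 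In short, the ``refined factorization adapted at once to $\K\U$ and to $\L\H\NN$'' that you are looking for is: parametrize by \emph{two} $\L$-conjugates of $\Lambda$, with the complementary transverse conditions $k\in\kappa(\L)$ and $u\in\nu(\L)$. This is the missing idea; once it is in place the rest of your outline goes through essentially as written.
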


The elements in $\cO\cap \hh$ are parametrized by an appropriate Weyl group (see Section \ref{sec:real forms}).
The two factors in $\ll\times \ll$
relate to  the Toda vector field
\[X'=[X,\pi_\kk X]=-[X,\pi_{\uu} X]\]
being tangent to both adjoint $\K$ orbits and adjoint $\U$ orbits.

\medskip
It is instructive to spell out Theorem \ref{thm:main} for $\G=\Sl_\C$. For each diagonal matrix $\Lambda\in \sl_\C$
with simple spectrum, we have coordinates given by strictly lower triangular matrices $Y,Z$ 
that linearize the Toda vector field in a dense open subset of the conjugacy class of $\Lambda$,
\[Y'=[Y,i\Im \Lambda],\quad Z'=[Z,-\Re \Lambda].\]
Here, $\Im \Lambda$ and $\Re \Lambda$ are the imaginary and real parts of the matrix $\Lambda$.
Thus,  phase and amplitude evolution of $X$ split in the variables $Y$ and $Z$, respectively,
and the evolution occurs separately on each entry of $Y$ and $Z$ -- the coordinates diagonalize the vector field.

\medskip
If the diagonal matrix $\Lambda$ has real entries/eigenvalues,
then the Toda vector field is tangent to the submanifold $\cO_0=\cO\cap \sl_\R$ consisting
of real matrices conjugated to $\Lambda$. Denote the strictly lower triangular real matrices by    $\ll_0= \ll\cap \sl_\R$. Then  the restriction of the linear coordinates $Y,Z$ to
$\ll_0\times \ll_0$
are diagonalizing coordinates around $\Lambda$ for the restriction of the Toda vector field to $\cO_0$,
\[(Y,Z)\in \ll_0\times\ll_0,\quad (Y',Z')=(0,[Z,-\Lambda]).\]
These coordinates on $\cO_0$ coincide with the ones described in \cite{LMST}, where they were constructed without using  complex matrices.

We are led to the  following problem: To look for other conditions
on the simple spectrum of a diagonal matrix $\Lambda$,
so that we can find a subgroup  $\G_0\subset \Sl_\C$ such that the Toda vector field is tangent to the
$\G_0$ conjugacy class of $\Lambda$, and the coordinates from Theorem \ref{thm:main} restrict to linearizing coordinates  on the $\G_0$ conjugacy
class (possibly after a suitable modification).

This problem is best analyzed in the setting of real forms of any complex semisimple Lie group $\G$.
Let $\G_0$ be the real form of
 $\G$ given as the fixed-point set of the anti-holomorphic
group involution $\tau$.
Under an alignment condition for $\tau$
and a given Iwasawa factorization 
of $\G$, the groups $\G$ and $\G_0$ have compatible  factorizations
(see Section \ref{sec:real forms} for details). In particular, the Cartan subalgebra $\hh$ of $\gg$ defined by the Iwasawa decomposition has to be invariant by $\tau$ and with fixed-point set a maximally non-compact Cartan subalgebra $\hh_0$ of the Lie algebra $\gg_0$ of $\G_0$.


\begin{theorem}\label{thm:relative} Let $\G$ and $\G_0$ be a complex semisimple Lie group and a real form with compatible Iwasawa factorizations.
 Let $\cO\subset \gg$ be a regular semisimple adjoint orbit with non-empty intersection with the
maximally non-compact Cartan subalgebra $\hh_0\subset \gg_0$, and let $\cO_0$ be the $\G_0$ orbit of one such intersection point.
Then the Toda vector field on $\gg$ has the following properties.
\begin{enumerate}[(i)]
\item It
is tangent to $\cO_0\subset \gg_0$.
\item Around any point in $\cO_0\cap \hh_0$ there exist coordinates on $\cO_0$ that transform the Toda vector field in a  linear  vector field.
\item The domain of these coordinates is an open dense subset of $\cO_0$, and the union of these domains over the finitely many points in $\cO_0\cap \hh_0$ covers $\cO_0$.
\end{enumerate}
\end{theorem}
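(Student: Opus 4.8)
\medskip
\noindent\emph{Proof idea.} The plan is to derive Theorem \ref{thm:relative} from Theorem \ref{thm:main} by restricting the whole construction along the fixed-point set of $\tau$, exploiting that compatibility of the Iwasawa factorizations makes it $\tau$-equivariant. First I would record the algebraic consequences of compatibility: $\tau$ preserves each of $\K$, $\A$, $\NN$, hence $d\tau$ preserves $\kk$, $\aa$, $\nn$ and therefore also $\ll$, $\uu$, $\hh$ and $i\aa=\hh\cap\kk$; consequently $d\tau$ commutes with each projection $\pi_\kk$, $\pi_\uu$, $\pi_{i\aa}$, $\pi_\aa$, so $\pi_\kk X\in\kk\cap\gg_0$ for $X\in\gg_0$ and $\pi_{i\aa}\Lambda,\pi_\aa\Lambda\in\hh_0$ for $\Lambda\in\hh_0$. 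Moreover, as $\cO$ contains the $\tau$-fixed point $\Lambda\in\hh_0\subset\gg_0$, one has $d\tau(\cO)=\G\cdot d\tau\Lambda=\cO$, so $d\tau$ acts on $\cO$ and permutes the finite set $\cO\cap\hh$ with fixed-point set $\cO\cap\hh\cap\gg_0=\cO\cap\hh_0$.

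For (i): for $X\in\gg_0$, $[X,\pi_\kk X]=-\mathrm{ad}_{\pi_\kk X}X$ is the value at $X$ of a fundamental vector field of the adjoint $\G_0$-action, its direction $\pi_\kk X$ lying in $\kk\cap\gg_0\subset\gg_0$; hence the Toda vector field is tangent to $\G_0\cdot X$, and with $X\in\cO_0$ this gives (i). (The same follows from the explicit solution of the Toda flow by Iwasawa factorization: if $X(0)\in\gg_0$ then $\exp(tX(0))\in\G_0$, and uniqueness of the Iwasawa factorization together with $\tau(\K)=\K$, $\tau(\U)=\U$ keeps both factors in $\G_0$, so the trajectory stays in $\cO_0$.)

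For (ii): fix $\Lambda\in\cO_0\cap\hh_0$ and let $\Phi_\Lambda\colon U_\Lambda\xrightarrow{\ \sim\ }\ll\times\ll$ be the chart of Theorem \ref{thm:main}. Since $\Lambda$ is $\tau$-fixed and $\Phi_\Lambda$ is built from the $\tau$-compatible Iwasawa data, $\tau$ preserves $U_\Lambda$ and $\Phi_\Lambda$ intertwines $d\tau|_{U_\Lambda}$ with the $\R$-linear involution $(Y,Z)\mapsto(d\tau Y,d\tau Z)$ of $\ll\times\ll$ (the two factors are not swapped, $d\tau$ preserving $\K$ and $\U$ separately). Passing to fixed-point sets, $\Phi_\Lambda$ restricts to a diffeomorphism $U_\Lambda\cap\gg_0\xrightarrow{\ \sim\ }\ll_0\times\ll_0$ with $\ll_0:=\ll\cap\gg_0$; as $U_\Lambda\cap\gg_0$ is connected and contains $\Lambda$, it lies in the single $\G_0$-orbit $\cO_0$, so $U_\Lambda\cap\cO_0=U_\Lambda\cap\gg_0$, and the identity $2\dim_\R\ll_0=2\dim_\C\ll=\tfrac12\dim_\R\cO=\dim_\R\cO_0$ confirms this is a chart on $\cO_0$. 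Finally, since $\pi_{i\aa}\Lambda,\pi_\aa\Lambda\in\hh_0$ and $[\ll_0,\hh_0]\subseteq\ll_0$, the linear vector field \eqref{eq:Toda-coordinates} is tangent to $\ll_0\times\ll_0$ and restricts there to a complete linear vector field, which by Theorem \ref{thm:main} is the image of the Toda vector field on $\cO_0$; this is (ii).

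For (iii): density of $U_\Lambda\cap\cO_0$ in $\cO_0$ holds because $\cO\setminus U_\Lambda$ is a proper complex-analytic subset of $\cO$ (as in the proof of Theorem \ref{thm:main}) and hence contains no open piece of the totally real, half-dimensional submanifold $\cO_0$, while $\Lambda\in U_\Lambda\cap\cO_0$ shows the domain is non-empty. The covering statement $\bigcup_{\Lambda\in\cO_0\cap\hh_0}(U_\Lambda\cap\cO_0)=\cO_0$ is the main obstacle, and it is \emph{not} a formal consequence of intersecting the cover $\cO=\bigcup_{\Lambda\in\cO\cap\hh}U_\Lambda$ with $\cO_0$: when $\gg_0$ is non-split, $d\tau$ interchanges some pairs $\{\Lambda,d\tau\Lambda\}$ in $\cO\cap\hh$, and the (non $\tau$-equivariant) charts centered at those points still have domains meeting $\cO_0$, so one must show they are redundant over $\cO_0$. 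I would resolve this by re-examining the proof of Theorem \ref{thm:main}, arranging its covering construction $\tau$-equivariantly (so that $\tau(U_\Lambda)=U_{d\tau\Lambda}$ and the defect loci are permuted accordingly), and then proving $\bigcap_{\Lambda\in\cO_0\cap\hh_0}(\cO\setminus U_\Lambda)\cap\cO_0=\emptyset$. At the level of the generalized Bruhat/Iwasawa cell decomposition underlying Theorem \ref{thm:main} this should reduce to the statement that the real form $\cO_0$ is covered by the cells indexed by the little Weyl group of $(\gg_0,\aa_0)$ --- precisely the group parametrizing $\cO_0\cap\hh_0$; alternatively, since by (i) the Toda flow restricts to $\cO_0$, one can try to deduce it dynamically, the subtle point being the compact directions $i\aa$ along which the flow merely rotates. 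Establishing this covering is where the bulk of the work lies; granted it, (ii) and (iii) give the theorem.
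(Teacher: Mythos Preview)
Your approach has a genuine gap at step (ii), stemming from an incorrect premise in your preliminary analysis: compatibility of Iwasawa factorizations does \emph{not} force $\tau$ to preserve $\NN$ (equivalently $\nn$ or $\ll$). Decompose $\aa=\aa_0\oplus\aa_1$ into $\pm1$ eigenspaces of $\tau$; for a root $\alpha$ of $(\gg,\aa)$ one has $\tau(\gg_\alpha)=\gg_{\tau^*\alpha}$ with $\tau^*\alpha(H)=\alpha(\tau H)$. Condition (e) only controls roots with $\alpha|_{\aa_0}\neq 0$; for an \emph{imaginary} root ($\alpha|_{\aa_0}=0$) one gets $\tau^*\alpha=-\alpha$, so $\tau$ interchanges $\nn_\Im$ and $\ll_\Im$. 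Hence in the non-split case $\tau(\ll)\neq\ll$, the chart $\Phi_\Lambda$ is \emph{not} $\tau$-equivariant, and the asserted identity $2\dim_\R\ll_0=2\dim_\C\ll$ fails: with $\ll_0=\ll\cap\gg_0$ one has $\dim_\R\ll_0=\dim_\C\ll_\Re$ (the non-imaginary part), whence
\[
\dim_\R\cO_0=\dim_\C\gg-\dim_\C\hh=2\dim_\C\ll=2\dim_\R\ll_0+\dim_\R\ll_\Im,
\]
strictly larger than $2\dim_\R\ll_0$ whenever $\mm_0=\zz_{\kk_0}(\aa_0)$ is non-abelian (e.g.\ $\gg_0=\sl_\mathbb{H}$ or $\su(p,q)$). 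So $\ll_0\times\ll_0$ is too small to be a chart on $\cO_0$, and the restriction argument cannot work as stated. (Your argument for (i) reaches the correct conclusion, but via the same false premise; the right justification is that for $X\in\gg_0$ the two Iwasawa decompositions in $\gg$ and $\gg_0$ coincide, so $\pi_\kk X=\pi_{\kk_0}X\in\gg_0$ without ever invoking $\tau(\nn)=\nn$.)

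The paper supplies exactly the missing $\dim_\R\ll_\Im$ directions: it enlarges the chart by the Chevalley big cell $\cC_{\M_0}$ of the complexification $\M$ of $\M_0$, obtaining coordinates with target $(\Lambda+\ll_0)\times(\Lambda+\ll_0+\ll_\Im)$, where the extra $Z_\Im\in\ll_\Im$ records the $\cC_{\M_0}$-factor and evolves trivially (since $\M_0$ acts by automorphisms of the Toda field). The covering in (iii) --- which you correctly flag as the hard point --- is then obtained not by intersecting the complex Bruhat cover with $\cO_0$, but by combining the Bruhat covers of $\G_0$ and of $\M_0$, so that the domains are indexed by $\NN_{\K_0}(\hh_0)/\Z_{\K_0}(\hh_0)$, an extension of $\W_0$ by $\W_{\M_0}$. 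Your restriction strategy is exactly right in the split case ($\ll_\Im=0$, $\mm_0$ abelian), and the paper notes this; beyond that case the $\M_0$-enlargement is essential.
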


The linearizing coordinates appearing in Theorem \ref{thm:relative} are not just the restriction of the coordinates
from Theorem \ref{thm:main} to a suitable (real) subalgebra of $\ll\times \ll$. This happens if  $\G_0$ is a split real form of $\G$. For instance, this is the case
for $\Sl_\R\subset \Sl_\C$. In general, the restriction of the second coordinate set $Z$ in \eqref{eq:Toda-coordinates}
has to be modified. The elements in $\cO_0\cap \hh_0$ are parametrized by a suitable Weyl group.

\medskip

Theorem \ref{thm:relative} for $\G=\Sl_\C$ has indeed a reformulation in terms of constraints on the  spectrum of $\Lambda$. These constraints are refinements of the natural requirement that the spectrum  be preserved or reversed under complex conjugation. More precisely, recall that, as well as $\sl_\R$, the other (conjugacy classes of) non-compact real forms of $\sl_\C$
are the quaternion Lie algebra $\sl_\mathbb{H}$ ---block $2\times 2$ complex matrices with each block corresponding to a quaternion--- and the special unitary Lie algebras $\su(p,q)$ ---the isotropy Lie algebras of the the pseudo-Hermitian forms $-z_1\bar{z}_1-\cdots-z_p\bar{z}_p+z_{p+1}\bar{z}_{p+1}+\cdots+z_{p+q}\bar{z}_{p+q}$, $p \ge q$ \cite[p. 444-445]{He}. 
\begin{theorem}\label{thm:slC} Let $\Lambda \in \sl_\C$ be a diagonal matrix with simple spectrum that satisfies
either of the following constraints:
\begin{enumerate}[(i)]
 \item $\Lambda\in \sl_\R$;
 \item $\Lambda\in \sl_\mathbb{H}$, i.e., $\Lambda=\begin{pmatrix} \lambda_1 & 0 & &  &  \\ 0 &
 \overline {\lambda}_1 &  & & \\
 & & \cdots &  & \\
  & & & \lambda_n & 0 \\
  & &  & 0 & \overline{\lambda}_n
  \end{pmatrix}$;
\item $\Lambda=\begin{pmatrix} \Lambda_{q} & 0 & 0   \\ 0 & \Im_{p-q} & 0\\
 0 & 0 & -\overline{\Lambda}_q
  \end{pmatrix}$,
where $\Lambda_q$ and $\Im_{p-q}$ are diagonal matrices of size $q$ and  $p-q$
of complex and purely imaginary complex numbers, respectively.
\end{enumerate}
Then the Toda vector field on $\sl_\C$ is tangent
to the $\G_0$ conjugacy class of $\Lambda$, and
there exists an atlas of the conjugacy class that transforms the Toda vector field
in a finite collection of  linear vector fields, where
the real form $\G_0$ that corresponds respectively to each constraint is 
\begin{enumerate}[(i)]
  \item $\Sl_\R$;
  \item $\Sl_\mathbb{H}$;
  \item $c\mathrm{SU}(p,q)c^{-1}$,
  where
  \[c=g\begin{pmatrix}I_{p-q} & 0 & 0\\
    0 & \tfrac{1}{\sqrt{2}}\mathrm{I}_q & \tfrac{1}{\sqrt{2}}\mathrm{I}^{\mathrm{op}} \\
    0 & -\tfrac{1}{\sqrt{2}}\mathrm{I}^{\mathrm{op}} &  \tfrac{1}{\sqrt{2}}\mathrm{I}_q
    \end{pmatrix},\quad \mathrm{I}^\mathrm{op}=\begin{pmatrix}
    0 &\cdots & 1\\
    & \cdots &\\
    1 &\cdots & 0
    \end{pmatrix},
\]
and $g$ is the permutation matrix given by
\[ (1, 2, 3, \ldots, p+q) \mapsto (1+q,2+q,3+q,\cdots ,p, 1,2,3,\cdots, q) \ . \] 
\end{enumerate}
\end{theorem}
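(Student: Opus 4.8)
The plan is to deduce all three cases from Theorem~\ref{thm:relative}, applied to $\G=\Sl_\C$ of the appropriate size ($n$, $2n$, or $p+q$) equipped with the Gram--Schmidt Iwasawa factorization $\K\A\NN$ of the Introduction, and to a real form $\G_{0}$ realized as the fixed-point group of an anti-holomorphic involution $\tau$. Recall that this factorization corresponds to the Cartan involution $\theta(X)=-X^{*}$, with $\kk=\su$, with $\aa$ the traceless real diagonal matrices, with $\nn$ the strictly upper triangular matrices, and with diagonal Cartan subalgebra $\hh=i\aa\oplus\aa$. For each constraint I would exhibit $\tau$ and check that it is aligned with $\K\A\NN$ in the sense of Section~\ref{sec:real forms}; the substantive requirements are that $\tau$ is an involution commuting with $\theta$, that $\hh$ is $\tau$-stable, and that $\hh_{0}:=\hh^{\tau}=\hh\cap\gg_{0}$ is a maximally non-compact Cartan subalgebra of $\gg_{0}$ (together with the compatibility of positive systems demanded there). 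Granting this, in every case the diagonal matrix $\Lambda$ of the statement is, by inspection, the general element of $\hh^{\tau}$; hence $\Lambda\in\cO\cap\hh_{0}\neq\varnothing$ and Theorem~\ref{thm:relative} applies directly, giving that the Toda field is tangent to $\cO_{0}=\G_{0}\cdot\Lambda$ and that the charts around the finitely many points of $\cO_{0}\cap\hh_{0}$, whose open dense domains cover $\cO_{0}$, form an atlas in each chart of which the Toda field is linear.

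For (i) take $\tau(X)=\overline{X}$, so $\G_{0}=\Sl_\R$ is the split real form; here $\tau$ fixes $\kk$, $\aa$, $\nn$ and the real diagonal, so every requirement is immediate, $\hh_{0}=\aa$ is split, and $\Lambda\in\sl_\R$ diagonal is the general element of $\hh_{0}$. For (ii) take $\tau(X)=J\overline{X}J^{-1}$ with $J=\diag(R,\dots,R)$ the block-diagonal matrix of $n$ copies of $R=\bigl(\begin{smallmatrix}0&1\\-1&0\end{smallmatrix}\bigr)$, so $\G_{0}=\Sl_\mathbb{H}$. Since $J$ is real orthogonal, $\tau$ commutes with $\theta$; conjugation by $R$ transposes the two entries of a $2\times2$ diagonal block, hence $\hh$ is $\tau$-stable with $\hh^{\tau}=\{\diag(\lambda_{1},\overline{\lambda}_{1},\dots,\lambda_{n},\overline{\lambda}_{n})\}$, exactly the matrices of (ii); its split part $\{\diag(a_{1},a_{1},\dots,a_{n},a_{n}):\sum_{j}a_{j}=0\}$ has dimension $n-1=\operatorname{rank}_{\R}\sl_\mathbb{H}$, so $\hh^{\tau}$ is maximally non-compact, and the positive-system part of the alignment is checked directly.

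Case (iii) is the substantial one. Here $\G_{0}=c\,\Su(p,q)\,c^{-1}$, i.e.\ $\tau=\Ad(c)\circ\tau_{p,q}\circ\Ad(c^{-1})$, where $\tau_{p,q}(X)=-SX^{*}S$ cuts out $\Su(p,q)$ for the pseudo-Hermitian form $S$ placed in the $(q,p-q,q)$ block pattern of the statement. The matrix $c=g\,C_{0}$, with
\[
C_{0}=\begin{pmatrix}\mathrm{I}_{p-q}&0&0\\ 0&\tfrac{1}{\sqrt{2}}\mathrm{I}_{q}&\tfrac{1}{\sqrt{2}}\mathrm{I}^{\mathrm{op}}\\ 0&-\tfrac{1}{\sqrt{2}}\mathrm{I}^{\mathrm{op}}&\tfrac{1}{\sqrt{2}}\mathrm{I}_{q}\end{pmatrix},
\]
has two roles: the permutation $g$ reorders the coordinates so that $S$ takes that block shape and the eventual purely imaginary block sits in the middle, while $C_{0}$ is a Cayley-type unitary transform, mixing a positive and a negative block of $S$, which carries the maximally non-compact Cartan subalgebra $\aa_{0}\oplus\tt_{0}$ of $\su(p,q)$ (whose $q$-dimensional split part $\aa_{0}$ sits in block-antidiagonal position in the standard realization) into the diagonal subalgebra $\hh$. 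First I would verify that $c$ is unitary (a $2\times2$-block computation using $(\mathrm{I}^{\mathrm{op}})^{2}=\mathrm{I}$), so that $\Ad(c)$ preserves $\kk$ and $\tau$ commutes with $\theta$. Next I would compute $\hh^{\tau}$: conjugating $\aa_{0}\oplus\tt_{0}$ by $c$ and writing out the result identifies it with the set of matrices $\diag(\Lambda_{q},\Im_{p-q},-\overline{\Lambda}_{q})$ of (iii), whose split part $\{\diag(\Re\Lambda_{q},0,-\Re\Lambda_{q})\}$ has dimension $q=\operatorname{rank}_{\R}\su(p,q)$, so $\hh^{\tau}=\hh_{0}$ is maximally non-compact. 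Then $\Lambda$ of (iii) lies in $\cO\cap\hh_{0}$ and Theorem~\ref{thm:relative} applies as before, yielding tangency of the Toda field to the $\G_{0}$-conjugacy class of $\Lambda$ and the finite linearizing atlas.

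I expect the whole obstacle to lie in case (iii): showing that the explicit $c$ simultaneously realizes $\hh_{0}$ as a maximally non-compact Cartan subalgebra of $\Ad(c)\,\su(p,q)$ \emph{and} satisfies the positive-system part of the compatibility of Iwasawa factorizations from Section~\ref{sec:real forms}. The Cayley-transform bookkeeping for the former (which dictates the appearance of $\mathrm{I}^{\mathrm{op}}$ in $C_{0}$ and fixes the permutation $g$) and the control of how conjugation by $c$ moves the positive nilradical $\nn$ for the latter are the only genuinely non-formal steps; cases (i) and (ii) are essentially immediate once Theorem~\ref{thm:relative} is available.
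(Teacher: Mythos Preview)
Your proposal is correct and follows essentially the same route as the paper: in each case one exhibits the anti-holomorphic involution $\tau$, verifies the compatibility conditions (c)--(e) of Section~\ref{sec:real forms} with the Gram--Schmidt Iwasawa factorization, identifies $\hh_{0}=\hh^{\tau}$ with the diagonal matrices of the stated shape, and then invokes Theorem~\ref{thm:relative} (via Theorem~\ref{thm:real-form-local-coordinates-LU}). The only minor differences are cosmetic: for $\sl_{\mathbb H}$ you argue maximal non-compactness of $\hh_{0}$ by matching the dimension of its split part against the real rank, whereas the paper simply quotes that $\sl_{\mathbb H}$ has a unique conjugacy class of Cartan subalgebras; and you correctly single out that the permutation factor $g$ in case~(iii) is there precisely to secure the positive-system compatibility~(e), which the paper also flags as the reason for its presence.
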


\medskip

Finally, we collect some of the cases  of Theorems \eqref{thm:main} and \eqref{thm:slC} in a result that reflects the title of the paper.

\begin{corollary}\label{cor:simple}
Let $\cO$ be a conjugacy class of diagonalizable matrices in
\[\sl_\mathbb{K},\,\mathbb{K}=\R,\C,\mathbb{H}, \quad \so_\C,\quad \sp_\C\]
whose spectrum is simple and belongs to $\mathbb{K}$. Then
there exists an atlas of $\cO$ that transforms the Toda vector field
in a finite collection of  diagonal linear  vector fields.
\end{corollary}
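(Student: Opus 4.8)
The plan is to deduce Corollary \ref{cor:simple} from Theorems \ref{thm:main} and \ref{thm:slC} by verifying, for each of the five families listed, that the linearizing coordinates supplied there actually \emph{diagonalize} the Toda field, i.e.\ decouple it into one-dimensional pieces, each a multiple of the Euler or of the rotational vector field. Finiteness of the resulting collection of one-dimensional fields is automatic: they are indexed by the finitely many covering charts, one per point of $\cO\cap\hh$ (respectively $\cO_0\cap\hh_0$).

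For $\sl_\C$, $\so_\C$ and $\sp_\C$ this is immediate from Theorem \ref{thm:main}. A conjugacy class of diagonalizable matrices with simple spectrum is precisely a regular semisimple adjoint orbit, so Theorem \ref{thm:main} furnishes an atlas on which the Toda field is \eqref{eq:Toda-coordinates}. Since $\gg$ is complex semisimple, every root space relative to $\hh=i\aa\oplus\aa$ is one-dimensional over $\C$; fixing a root vector $E_\alpha$ for each negative root $\alpha$ gives a $\C$-basis of $\ll$ in which \eqref{eq:Toda-coordinates} reads, coordinate by coordinate, $y_\alpha'=-\alpha(\pi_{i\aa}\Lambda)\,y_\alpha$ and $z_\alpha'=\alpha(\pi_\aa\Lambda)\,z_\alpha$. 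As $\alpha$ is real-valued on $\aa$, the first coefficient is purely imaginary and the second real, so $y_\alpha$ rotates and $z_\alpha$ dilates: the field is diagonal of the required shape.

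For $\sl_\R$, the split real form of $\sl_\C$, Theorem \ref{thm:slC}(i) (equivalently the remark after Theorem \ref{thm:relative}) says the linearizing coordinates are the restriction of those of Theorem \ref{thm:main} to $\ll_0\times\ll_0$, on which the field is $(0,[Z,-\Lambda])$, i.e.\ $z_\alpha'=\alpha(\Lambda)z_\alpha$ with $\alpha(\Lambda)\in\R$: a finite collection of Euler fields. The genuinely non-trivial family is $\sl_\mathbb{H}$, Theorem \ref{thm:slC}(ii); here the real form is not split and the construction modifies the $Z$-variables. I would trace through the proof of Theorems \ref{thm:relative} and \ref{thm:slC}(ii) to see how the linearizing chart on $\cO_0$ lies over the chart $\ll\times\ll$ of Theorem \ref{thm:main}. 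The aligned quaternionic involution $\tau$ of $\Sl_{2n}(\C)$ fixes $\Lambda\in\cO_0\cap\hh_0$, preserves $\hh=i\aa\oplus\aa$, and on the diagonal acts by the block-transposition $\bar\sigma$ swapping the two slots of each $2\times2$ block. The crux is to check that the linearizing-and-modifying coordinate change is block-diagonal for the root-space grading of $\ll$, so that the linear field it produces on $\cO_0$ still respects a splitting into one-dimensional coordinate lines. Granting this, since $\pi_\aa\Lambda=\Re\Lambda$ is $\bar\sigma$-invariant while $\pi_{i\aa}\Lambda=i\Im\Lambda$ is reversed by $\bar\sigma$, the scalars $\alpha(\pi_{i\aa}\Lambda)$ and $\alpha(\pi_\aa\Lambda)$ are sent by $\bar\sigma$ to their complex conjugates, so on every surviving line the field is a rotational field on the first factor and an Euler field on the second; the union over the finitely many points of $\cO_0\cap\hh_0$ is the required atlas.

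I expect this block-diagonality check for $\sl_\mathbb{H}$ to be the main obstacle: one must show the modification forced on the $Z$-variables does not fold together coordinate lines with distinct scalars, failing which the restricted field --- still linear by Theorem \ref{thm:slC} --- would not be diagonal. With that in place, Corollary \ref{cor:simple} is the juxtaposition of the four families of charts, one per real form, each carrying a diagonal linear vector field.
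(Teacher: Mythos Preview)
Your treatment of $\sl_\C$, $\so_\C$, $\sp_\C$, and $\sl_\R$ matches the paper's: the first three follow directly from Theorem~\ref{thm:main} because complex root spaces are one-dimensional, and the split real form restricts the $(Y,Z)$ coordinates without modification.

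The gap is in your handling of $\sl_\mathbb{H}$. You correctly identify that the diagonality of the $Y$-evolution is the crux, but you do not verify it: you propose a conceptual route via the involution $\bar\sigma$ and then write ``Granting this,'' leaving the key step unproven. In fact the paper explicitly remarks, at the end of the proof of Theorem~\ref{thm:real-form-local-coordinates-LU}, that for a general real form it is \emph{not} clear whether $Y'=[Y,\pi_{\tt_0}\Lambda]$ is diagonal; the $Z$ and $Z_\Im$ parts always are, but the $Y$ part need not be. For $\sl_\mathbb{H}$ the paper resolves this by the explicit computation carried out inside the proof of Theorem~\ref{thm:slC}, writing out $Y'_{2a-1,2b-1}=\Im(\lambda_b-\lambda_a)Y_{2a-1,2b-1}$, $Y'_{2a-1,2b}=\Im(\overline{\lambda_b}-\lambda_a)Y_{2a-1,2b}$, and so on, and then comments: ``We do not know the reason why the evolution in the variable $Y$ is diagonal.'' So your conceptual $\bar\sigma$-argument, if it could be completed, would be stronger than what the paper has --- but as written it is only a sketch, and the authors themselves did not find such an argument. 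To complete the corollary you should simply invoke that explicit computation from the proof of Theorem~\ref{thm:slC}(ii), which is exactly what the paper's own proof of Corollary~\ref{cor:simple} does.
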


\medskip

The Hamiltonian genealogy of the original Toda vector field  naturally led to the study of its integrability (in the sense of Liouville). 
The standard symplectic structure in $\RR^{2n}$, associated with the dynamics of $n$ particles in the line, yields a symplectic structure on $\cJ$, the set of traceless Jacobi matrices, after getting rid of the simple dynamics of the center of mass, by the  change of variable suggested by Flaschka \cite{F}.
The eigenvalues along an integral curve $J(t) \in \cJ$  are preserved, and Poisson commute: they  provide the required conserved quantities to yield the complete integrability of the system on $\cJ$.

Larger phase spaces for the Toda flow raised new  integrability issues. 
Adler \cite{Adler} showed that $\cJ$ is a coadjoint orbit of the group of upper triangular matrices. The integrability on regular orbits in $\sl_\RR$ (generic, real symmetric matrices with simple spectrum) was proved in \cite{DLNT1}. Subsequently,
 the symmetry requirement was dropped \cite{DLT2} and 
 the Lie algebra $\sl_\R$ was replaced by an arbitrary real semisimple Lie algebra (for the symmetric case) \cite{GS}. For the Toda vector field in $\cJ$, Agrotis et al. \cite{Agrotis} proved another form of integrability: More conserved quantities are obtained at the expense of less commutativity of its Hamiltonians.

\medskip
On the other hand, conserved quantities should be naturally derived from the fact that the Toda vector field is tangent  to upper triangular and orthogonal orbits by conjugation (from Remark \ref{hierarchy}, this intersection collects the integral curves of the entire Toda hierarchy). This more Lie algebraic approach does not account for the symplectic geometry, in particular the possible commutativity of the Hamiltonians. In exchange it provides the local coordinates which, as in Theorem \ref{thm:main}, diagonalize the vector field.

\medskip

{\bf Acknowledgments.} The authors thank Ricardo Leite and Nicolau Saldanha for extensive conversations. DMT and CT acknowledge  finantial support of CNPQ and FAPERJ. DMT and CT also acknowledge respectively partial finantial support  of Universidad Polit\'ecnica de Madrid, Programa Propio 2024, and of StoneLab.

\section{The case of  traceless complex matrices}\label{sec:cx}
In this section we 
provide a    global description of the Toda flow on conjugacy classes of matrices in
 $\sl_\C$ with simple spectrum. 
 
 Our methods 
 are not the usual ones from matrix theory, but rather pertain to Lie theory. We do this in order to prepare the reader for the more general arguments in the coming sections, valid for
arbitrary complex semisimple Lie algebras.

\medskip
\subsection{Factorizations and coordinates on conjugacy classes}\label{sec:factorizations}
For any conjugacy class of matrices in $\sl_\C$
 with simple spectrum, we  construct suitable local coordinates centered at a diagonal matrix. To do that we  combine properties of the $\mathrm{Q}\mathrm{R}$ and the  $\L\U$
factorization of matrices in $\Sl_\C$. We  use uppercase letters for Lie algebra elements  and  lowercase letters for
Lie group elements.

\medskip

The Gram-Schmidt algorithm factors a  matrix in $\Sl_\C$
into a special unitary matrix, a diagonal matrix with strictly positive entries, and an
upper triangular matrix with diagonal entries equal to 1,
\[\Sl_\C=\K\A\NN.\]
Multiplying  $\U=\A\NN$ we obtain the classical $\mathrm{QR}$ factorization
\[\Sl_\C=\K\U,\]
where $\U$ stands for triangular matrices with strictly positive diagonal entries. Let $\L$ be the group of lower triangular matrices with diagonal entries equal to 1.

As a preliminary step,  we discuss from a geometric viewpoint certain subsets of matrices with relevant factorization properties.

\medskip

  Let $\cC$ be the subset of  unitary matrices in $\Sl_\C$ that  possess an $\L\U$ factorization,
\[\cC=\{k\in \K\,|\,\exists l\in \L,u\in \U,\, k=lu\}.\]
Clearly, any unitary matrix belongs to $\cC$ exactly when all its principal
minors are positive. Since the conditions to produce an $\L\U$ and a $\U\L$ factorization
are the same, the subset $\cC$ is invariant under inversion. There is a more useful  characterization of $\cC$: It is the image of the restriction
to $\L$ of the first projection in the $\mathrm{QR}$ factorization $\kappa:\Sl_\C\to \K$,
\begin{equation*}\kappa:\L{\longrightarrow}{\cC}.
 \end{equation*}
From a geometric viewpoint, $\cC$ is an embedded submanifold of $\K$ and the projection is a real analytic diffeomorphism.

A similar  analysis applies to
the restriction to $\L$ of the second projection in the $\mathrm{QR}$ factorization
 $\nu:\Sl_\C\to \U$.
Define
\begin{equation}\label{eq:more-LU}
\mathcal{D}=\nu(\L)=\{u\in U\,|\,\exists l\in \L,k\in \K,\, l=ku\}.
\end{equation}

\begin{lemma}\label{lem:more-LU} The subset $\mathcal{D}$ is a closed embedded submanifold of $\U$ and
 $\nu:\L\to \mathcal{D}\subset \U$
 is a real analytic diffeomorphism onto its image.
\end{lemma}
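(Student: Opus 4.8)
The statement to prove is Lemma~\ref{lem:more-LU}: that $\mathcal{D}=\nu(\L)$ is a closed embedded submanifold of $\U$ and that $\nu|_\L\colon \L\to\mathcal{D}$ is a real-analytic diffeomorphism onto its image. The natural strategy is to run the argument exactly parallel to the one just sketched for $\cC=\kappa(\L)$, using the global structure of the $\mathrm{QR}$ and $\L\U$ factorizations, and then to upgrade the ``image'' statement to ``closed'' by a direct topological argument.

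\emph{The plan.} I will realize $\nu|_\L$ by restricting the inverse $\mathrm{QR}$ map to $\L$ and then projecting, and reduce the whole statement to a single linear-algebra fact. Let $E\colon\K\times\U\to\Sl_\C$, $(k,u)\mapsto ku$, be the $\mathrm{QR}$ multiplication map. It is a real analytic diffeomorphism --- its differential at $(k,u)$ is $(kA,uB)\mapsto ku\,(\mathrm{Ad}(u^{-1})A+B)$, which is invertible because $\kk\cap\uu=\{0\}$ and $\mathrm{Ad}(u)\uu=\uu$ --- so $\kappa$ and $\nu$, the two components of $E^{-1}$, are real analytic. Since $\L$ is closed in $\Sl_\C$, the set $\widetilde{\mathcal D}:=E^{-1}(\L)=\{(k,u)\in\K\times\U:ku\in\L\}$ is a closed embedded real analytic submanifold of $\K\times\U$, and $E^{-1}$ restricts to a real analytic diffeomorphism $\L\xrightarrow{\ \sim\ }\widetilde{\mathcal D}$, $l\mapsto(\kappa(l),\nu(l))$; composing with the second projection $p_2\colon\K\times\U\to\U$ recovers $\nu|_\L$, and $p_2(\widetilde{\mathcal D})=\nu(\L)=\mathcal D$. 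Hence it suffices to show that $p_2|_{\widetilde{\mathcal D}}\colon\widetilde{\mathcal D}\to\U$ is an injective, proper immersion; then $\mathcal D$ is a closed embedded submanifold of $\U$ and $p_2|_{\widetilde{\mathcal D}}$ --- hence $\nu|_\L$ --- is a real analytic diffeomorphism onto it.

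\emph{Injectivity and properness are formal.} If $(k_1,u),(k_2,u)\in\widetilde{\mathcal D}$ then $k_1u,k_2u\in\L$, so $k_1k_2^{-1}=(k_1u)(k_2u)^{-1}$ lies in $\K\cap\L=\{\mathrm I\}$, giving $k_1=k_2$. And $p_2\colon\K\times\U\to\U$ is proper because $\K$ is compact, so its restriction to the closed subset $\widetilde{\mathcal D}$ is proper; a proper map into a locally compact Hausdorff space is closed, which in particular re-proves that $\mathcal D=\U\cap\K\L$ is closed in $\U$ (as one also sees from $\K\L$ being closed in $\Sl_\C$, a compact set times a closed set in a topological group).

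\emph{The immersion statement is the heart of the matter.} With the identifications $T_k\K=k\kk$ and $T_u\U=u\uu$, one computes $T_{(k,u)}\widetilde{\mathcal D}=\{(kA,uB):A\in\kk,\ B\in\uu,\ \mathrm{Ad}(u^{-1})A+B\in\ll\}$, and $dp_2$ sends $(kA,uB)$ to $uB$; so the kernel of $dp_2|_{\widetilde{\mathcal D}}$ at $(k,u)$ is identified with $\{A\in\kk:\mathrm{Ad}(u^{-1})A\in\ll\}$, and I must show it is $\{0\}$, i.e. $\kk\cap\mathrm{Ad}(u)\ll=\{0\}$ for every $u\in\U$. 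Put $Y:=\mathrm{Ad}(u^{-1})A\in\ll$, so $A=uYu^{-1}$; multiplying $A+A^{*}=0$ by $u^{*}$ on the left and $u$ on the right turns the condition $A\in\kk$ into $PY+Y^{*}P=0$, where $P:=u^{*}u$ is Hermitian positive definite and $Y$ is strictly lower triangular. I claim $Y=0$, by induction on the matrix size $n$ (the case $n=1$ being trivial). Splitting off the last row and column, write $P=\begin{pmatrix}P'&q\\ q^{*}&p\end{pmatrix}$ and $Y=\begin{pmatrix}Y'&0\\ r^{\mathrm T}&0\end{pmatrix}$, with $P'$ positive definite, $p>0$, and $Y'$ strictly lower triangular; since $PY=-(PY)^{*}$ is skew-Hermitian, its $(1,2)$ block --- which is minus the adjoint of its $(2,1)$ block $q^{*}Y'+pr^{\mathrm T}$ --- forces $q^{*}Y'+pr^{\mathrm T}=0$, i.e. $r^{\mathrm T}=-\tfrac1p q^{*}Y'$, whereupon its $(1,1)$ block equals $\widehat PY'$, where $\widehat P:=P'-\tfrac1p qq^{*}$ is the Schur complement of $p$ in $P$, hence again positive definite; as $\widehat PY'$ is skew-Hermitian, the inductive hypothesis gives $Y'=0$, so $r=0$ and $Y=0$.

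\emph{Assembling the pieces.} An injective, proper immersion is a closed embedding, so $\mathcal D=\nu(\L)$ is a closed embedded submanifold of $\U$, necessarily of dimension $\dim\L$; then $p_2|_{\widetilde{\mathcal D}}\colon\widetilde{\mathcal D}\to\mathcal D$, and a fortiori $\nu|_\L\colon\L\to\mathcal D$, is a proper continuous bijection (hence a homeomorphism) that is an immersion between real analytic manifolds of equal dimension (hence a local diffeomorphism), so a real analytic diffeomorphism, by the real analytic inverse function theorem. The one substantial ingredient is the linear-algebra fact $\kk\cap\mathrm{Ad}(u)\ll=\{0\}$ --- equivalently, the transversality exhibiting $\mathcal D$ as a submanifold of $\U$ --- and everything else is a transcription of the $\mathrm{QR}$ and $\L\U$ bookkeeping already used for $\cC$, with the roles of $\K$, $\U$ and matrix inversion interchanged.
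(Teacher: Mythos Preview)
Your proof is correct, but it takes a genuinely different route from the paper's. The paper introduces the \emph{opposite} Iwasawa factorization $\Sl_\C=\K\L\A$ (obtained by applying the Cartan involution $g\mapsto(g^*)^{-1}$ to $\K\A\NN$ and swapping the last two factors), defines $\varpi:\U\to\A$ as the restriction of the third projection, and realizes $\mathcal{D}=\varpi^{-1}(e)$ as the fiber over the identity. Regularity of $e$ is immediate from $\varpi(ua)=a$ for $a\in\A$, which makes $\mathcal{D}$ a closed embedded submanifold in one stroke; the diffeomorphism statement then follows because composing $\nu|_\L$ with the $\L$-projection of $\K\A\L$ gives the identity on $\L$.

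Your approach instead transports $\L$ into $\K\times\U$ via the QR diffeomorphism and shows the second projection restricted to this graph is a proper injective immersion. The substantive step is the transversality $\kk\cap\mathrm{Ad}(u)\ll=\{0\}$, which you prove by a clean Schur-complement induction on the matrix size. This is elementary and self-contained, but it is tied to the matrix realization; the paper's argument, by contrast, uses only the existence of the opposite Iwasawa factorization and therefore carries over verbatim to an arbitrary complex semisimple Lie group (as the paper later exploits in Section~\ref{sec:cx-ss}). Your route would require replacing the matrix induction by a Lie-theoretic proof of $\kk\cap\mathrm{Ad}(u)\ll=\{0\}$ to generalize---not hard, but an extra step the paper's framing avoids entirely.
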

\begin{proof} The result of applying the (Cartan) involution
\[\theta:\Sl_\C\to \Sl_\C,\quad g\mapsto {(g^*)}^{-1}\]
to the Gram-Schmidt factorization produces the
opposite Gram-Schmidt factorization
$\Sl_\C=\K\A{\L}$. Switching the last
two factors  gives  yet another factorization $\Sl_\C=\K\L\A$. Denote by $\varpi:\U\to \A$ the restriction to $\U$ of the third projection of the latter factorization.
It follows from \eqref{eq:more-LU} that $\mathcal{D}=\varpi^{-1}(e)$, where $e$ is the identity element. We claim that $e$ is a regular value
for $\varpi$: If $\varpi(u)=e$, then
\[u=kl,\quad k\in \K,l\in \L.\]
Since for any $a\in \A$ we have
\[ua\subset \U,\quad \varpi(ua)=a,\]
the differential of $\varpi$ at $u$ is surjective: $\mathcal{D}$ is a closed embedded submanifold of $\U$.

Equation \eqref{eq:more-LU} implies that the composition of the restriction $\nu:\L\to \U$ with the projection
$\Sl_\C=\K\A\L\to \L$ equals the identity on $\L$.
Therefore $\nu:\L\to \U$
is an injective immersion onto its image $\mathcal{D}$. Since $\mathcal{D}$ is an
embedded submanifold, this implies
that $\nu$ is real analytic diffeomorphism from $\L$ to $\mathcal{D}$.
\end{proof}

\begin{remark} The description of $\mathcal{D}$ as the fiber $\varpi^{-1}(e)$ provides the following  alternative characterization:
 it is the subset of matrices $u\in \U$ such that all principal minors of ${(u^*u)}^{-1}$  equal 1.
 However, our conceptual viewpoint does not pursue characterizing solutions of certain factorization problems in terms of matrix features (e.g., the behavior of minors). It focusses on the
 existence of those solution sets together with some of their key geometric properties.
 This viewpoint lends itself to Lie theoretic generalizations.
\end{remark}

\begin{remark}\label{rem:more-LU}
Lemma \ref{lem:more-LU} puts on equal footing the three manifolds involved in
the $\L\U$ factorization for matrices in the special unitary group $\K\subset \Sl_\C$.
\begin{equation*}
\xymatrix{
 &  & \L \ar[dll]^{\kappa}_{\cong}\ar[drr]_{\nu}^\cong  & &  \\
\cC\ar[rrrr]^\cong &  & & & \mathcal{D},}\qquad \xymatrix{
 &  & l=ku\ar@{|->}[dll]\ar@{|->}[drr] & &  \\
k \ar@{|->}[rrrr] &  & & & u}
\end{equation*}
\end{remark}

\bigskip
We now use the three submanifolds in Remark \ref{rem:more-LU} to parametrize open dense subsets  of a conjugacy class  $\cO\subset \sl_\C$ of
matrices with simple spectrum. To do that we  contextualize diagonal matrices in a Lie theoretic setting.

Let $\hh\subset \sl_\C$ denote the maximal abelian subalgebra  of  diagonal matrices
\[\hh=\left\{\begin{pmatrix} \lambda_1 &  &   & 0 \\
 & & \cdots &   \\
 0 & &    & \lambda_n
  \end{pmatrix}\in \sl_\C\right\}.\]
Let $\Lambda\in \cO$ be a diagonal matrix, i.e., $\Lambda\in \cO\cap \hh$. That
$\Lambda$ have simple spectrum is equivalent to the centralizer
\[\Z(\Lambda)=\{g\in \Sl_\C\,|\, g\Lambda g^{-1}=\Lambda\}\]
being equal to the maximally abelian subgroup
 $\H\subset \Sl_\C$ of  diagonal  matrices  (the group whose Lie algebra is $\hh$).
 
In  the rest of the paper, an invertible matrix/group element as superscript  denotes conjugation by that matrix/group element, $X^g = g X g^{-1}$. Also, $\ll$  denotes the strictly lower triangular matrices (the Lie algebra of $\L$).

\begin{lemma}\label{lem:LU-maps} The maps
\begin{equation}\label{eq:LU-maps}\Phi_\cC: \cC\times \L\to \cO,\quad (k,l)\mapsto \Lambda^{(lk)^{-1}},\quad
\Phi_\mathcal{D}:\mathcal{D}\times \L\to \cO,\quad (u,l)\mapsto \Lambda^{ul^{-1}},
\end{equation}
have the following properties.
\begin{enumerate}[(i)]
 \item They are diffeomorphisms onto the same open dense subset $\cU\subset \cO$;
 \item They produce rulings of $\cU$ by
$\cC$ and $\mathcal{D}$ conjugates of the affine subspace
\begin{equation}\label{eq:LU-rulings}
\Lambda+\ll=\Phi_\cC(\{e\}\times \L)=\Phi_\mathcal{D}(\{e\}\times \L).
 \end{equation}
\end{enumerate}
\end{lemma}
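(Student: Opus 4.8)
The plan is to realize both maps as compositions involving the $QR$ and $LU$ factorizations, and then use the geometric facts established in Lemma \ref{lem:more-LU} and Remark \ref{rem:more-LU}. First I would analyze $\Phi_\cC$. The key observation is that, since $\Z(\Lambda)=\H$, the orbit map $\Sl_\C\to\cO$, $g\mapsto\Lambda^{g^{-1}}$ descends to a diffeomorphism $\H\backslash\Sl_\C\to\cO$ (using that $\Lambda$ is regular semisimple, so $\cO$ is a closed submanifold and the orbit map is a submersion with fibers the right $\H$-cosets). The map $\cC\times\L\to\Sl_\C$, $(k,l)\mapsto lk$ has image exactly the set of products $lk$ with $l\in\L$, $k\in\cC$; by definition of $\cC=\kappa(\L)$, writing $k=\kappa(l')$ for $l'\in\L$ means $l'=ku$ for some $u\in\U$, so $lk=ll'u^{-1}$. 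Thus $\Lambda^{(lk)^{-1}}=\Lambda^{u(ll')^{-1}}$, and modulo left $\H$-action (equivalently, since $\Lambda^{h}=\Lambda$ for $h\in\H$, modulo the right action of $\H$ under $g\mapsto\Lambda^{g^{-1}}$) this lands in the image of $\L\U\to\cO$, $g\mapsto\Lambda^{g^{-1}}$ restricted appropriately. The cleaner route: I would show directly that $\Phi_\cC$ is the composition
\[
\cC\times\L \xrightarrow{\ \kappa^{-1}\times\id\ } \L\times\L \xrightarrow{\ (l',l)\mapsto \Lambda^{(ll')^{-1}}\ } \cO,
\]
no — rather, track that $(k,l)\mapsto lk$ followed by the orbit map, using $\cC\cong\L$ via $\kappa$, identifies $\Phi_\cC$ with a map whose image is the set $\{\Lambda^{g^{-1}}: g\in\L\U\}$, i.e. the conjugates of $\Lambda$ by elements admitting an $\L\U$ (equivalently $\bar\L\bar\U$) factorization.

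Second, for $\Phi_\mathcal{D}$: by Lemma \ref{lem:more-LU}, $\mathcal D=\nu(\L)$ and $\nu:\L\to\mathcal D$ is a diffeomorphism; moreover if $u=\nu(l')$ then $l'=ku$ for some $k\in\K$, so $ul^{-1}=k^{-1}l'l^{-1}$, hence $\Lambda^{ul^{-1}}=\Lambda^{(l(l')^{-1}k)^{-1}}$ — wait, one must be careful: $\Lambda^{ul^{-1}}=(ul^{-1})\Lambda(ul^{-1})^{-1}=(k^{-1}l'l^{-1})\Lambda(\cdots)^{-1}$, and $k^{-1}$ conjugation does not drop out. The correct statement is that $\Phi_\mathcal{D}(u,l)=\Phi_\cC(\kappa(l'),l)$ exactly when $u=\nu(l')$ and $k=\kappa(l')$, because then $l'=ku$ and $lk$ versus... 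I would instead verify the single clean identity: for $l'\in\L$ with $QR$-factorization $l'=\kappa(l')\nu(l')=ku$, one has $(lk)^{-1}\cdot(\text{something in }\H) = $ a right-$\H$-translate of $(ul^{-1})^{-1}\cdot l' $; concretely $\Lambda^{(lk)^{-1}} = \Lambda^{(l\,l'\,u^{-1})^{-1}}$ and $\Lambda^{ul^{-1}} = \Lambda^{(l\,(l')^{-1}\cdot(l' u^{-1} l'^{-1})\cdots)}$. The honest approach is: prove $\Phi_\cC\circ(\kappa\times\id)\circ(\id\times?) = \Phi_\mathcal D\circ(\nu\times\id)$ as maps on $\L\times\L$ up to the $\H$-ambiguity — i.e. show $\Phi_\cC$ and $\Phi_\mathcal{D}$ have the same image $\cU$ and that $\Phi_\mathcal D^{-1}\circ\Phi_\cC$ is the diffeomorphism $\nu\circ\kappa^{-1}\times\id$ of $\cC\times\L$ with $\mathcal D\times\L$ from Remark \ref{rem:more-LU}. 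That reduces everything to a statement about $\Phi_\cC$ alone.

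Third, for $\Phi_\cC$ being a diffeomorphism onto an open dense set: open-ness and the immersion/injectivity follow by identifying its image with $\{\Lambda^{g^{-1}}:g\in\bar\L\bar\U\}$, where $\bar\L\bar\U\subset\Sl_\C$ is the big cell of the Bruhat-type (or just $LU$) decomposition — an open dense subset of $\Sl_\C$ — and using that the orbit map $\Sl_\C\to\cO$ is an open map (submersion). More precisely: $g\mapsto\Lambda^{g^{-1}}$ restricted to $\L\U$ is constant on left $\H$-cosets only after intersecting... here I use $\Z(\Lambda)=\H\subset\U$, so the map $\L\U\to\cO$ factors through $\L\U/\H \cong \L\times(\U/\H)\cong\L\times\rN$ where $\rN$ is the unipotent radical — giving a manifold of the right dimension $\dim\cO = 2\dim\ll$. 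Density: the complement is contained in the image of the lower-dimensional set $\Sl_\C\setminus\L\U$, which is nowhere dense, and one checks its image in $\cO$ is contained in a proper analytic subset. Finally, claim (ii) is immediate: $\Phi_\cC(\{e\}\times\L)=\{\Lambda^{l^{-1}}:l\in\L\}=\Lambda+\ll$ after the standard computation $\Lambda^{l^{-1}}=\Lambda-[\,\cdot\,]+\cdots\in\Lambda+\ll$ using that $\ad$ of a lower-triangular nilpotent lowers the ``triangular degree'', together with $\L$ acting freely (as $\L\cap\H=\{e\}$). The analogous rulings for $\mathcal D$ follow from the identification in the previous step, since $\nu(e)=e$.

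\textbf{Main obstacle.} The technical heart is pinning down that the image $\cU$ is \emph{the same} open dense subset for both maps and that it equals $\{\Lambda^{g^{-1}}:g\in\L\U\}/(\text{$\H$-ambiguity})$, i.e. carefully handling the right $\H=\Z(\Lambda)$ ambiguity in the orbit map and checking the dimension count $\dim(\cC\times\L)=2\dim\ll=\dim\cO$ forces the injective immersion to be an open embedding. The density statement — that the complement of $\cU$ in $\cO$ is a proper closed analytic subset, hence nowhere dense — also requires a small argument identifying that complement with the image of $\Sl_\C\setminus\L\U$ under the orbit submersion.
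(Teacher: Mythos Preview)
Your overall strategy (relate both maps through the triangle $\L\leftrightarrow\cC\leftrightarrow\mathcal D$, identify the image with a big-cell set, count dimensions) is the right one and matches the paper. But there are two genuine gaps.

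\medskip
\textbf{The transition map is not $\times\,\id$.} Your claimed identity $\Phi_{\mathcal D}^{-1}\circ\Phi_\cC=(\nu\circ\kappa^{-1})\times\id$ is false. Take $l=e$ and $k=\kappa(l_1)$ for some $l_1\in\L\setminus\{e\}$; writing $l_1=ku$ you would need $\Lambda^{k^{-1}}=\Lambda^{u}$, i.e.\ $ku=l_1\in\Z(\Lambda)=\H$, which forces $l_1=e$. The correct transition is
\[
\Phi_{\mathcal D}^{-1}\circ\Phi_\cC:(k,l)\longmapsto\bigl(\nu(l_1),\,l\,l_1\bigr),\qquad l_1:=\kappa^{-1}(k),
\]
because $u\,l_1^{-1}=k^{-1}$ gives $\Lambda^{u(l l_1)^{-1}}=\Lambda^{k^{-1}l^{-1}}=\Lambda^{(lk)^{-1}}$. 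The second factor genuinely depends on $k$; your reduction ``everything to a statement about $\Phi_\cC$ alone'' is valid, but only after this correction.

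\medskip
\textbf{Filling $\Lambda+\ll$ by a single $\L$-orbit.} Your argument for (ii) shows only that the $\L$-orbit of $\Lambda$ is open in $\Lambda+\ll$: freeness at $\Lambda$ (from $\L\cap\H=\{e\}$) together with the dimension count gives an open image, but not surjectivity. To finish you need freeness of $\L$ at \emph{every} point of $\Lambda+\ll$ and then a connectedness argument: each $X\in\Lambda+\ll$ is diagonalizable with simple spectrum, so its stabilizer in $\Sl_\C$ is a torus and hence meets the unipotent group $\L$ trivially; thus every $\L$-orbit in $\Lambda+\ll$ is open, and connectedness of $\Lambda+\ll$ forces a single orbit. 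Freeness at $\Lambda$ alone does not exclude smaller-dimensional orbits elsewhere.

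\medskip
A minor point: your density argument via ``image of the complement of $\L\U$ is a proper analytic subset'' needs justification, since the orbit map is a submersion and images of nowhere-dense sets under open maps need not be nowhere dense. The paper instead shows directly that $\cC\TT$ is open dense in $\K$, hence $\cC\L\H=\cC\TT\L\A$ is open dense in $\K\L\A=\Sl_\C$, and then applies the orbit map, which sends open dense saturated sets to open dense sets.
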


A ruling is a fibration of a manifold by affine spaces.

\begin{proof}
Let $\H=\TT\A$, where $\TT$ is a diagonal of phases, and $\A$ is positive diagonal. 
The product map 
\[\cC\times \TT\to \cC\TT\subset \K\] is a diffeomorphism
onto a  dense subset of $\K$ (its complement being those  unitary matrices with at least one trivial principal minor).
Therefore
\[\cC\TT\L\A\subset \Sl_\C\,(=\K\L\A)\]
is a dense open subset. As the compact torus $\TT$ normalizes $\L$, this open dense subset admits the presentation
\[\cC\L\TT\A=\cC\L\H.\]
Standard properties of actions of groups on manifolds imply that
\[\cU=\{\Lambda^g,\,|\,g\in \cC\L\H\}\]
is an open dense neighborhood of $\Lambda$ in $\cO$.
Because $\H=\mathrm{Z}(\Lambda)$  and $\cC$ is invariant under
inversion, the subset $\cU$ equals the image of the map $\Phi_\cC$ in \eqref{eq:LU-maps}.
Moreover, the factorization $\cC\L\H$ as the product of  the (embedded) submanifold   $\cC\L$ and the centralizer of $\Lambda$ implies that $\Phi_\cC$ is a diffeomorphism onto $\cU$. This proves (i) for $\Phi_\cC$.

To prove (i) for $\Phi_\mathcal{D}$ we  factor it through $\Phi_\cC$.
The proof of Lemma \ref{lem:more-LU} (see also \eqref{rem:more-LU})
shows that
\[\kappa\circ \nu^{-1}:\mathcal{D}\to \cC,\quad \nu^{-1}:\mathcal{D}\to\L\]
are diffeomorphisms.
Therefore
\[\mathcal{D}\times \L\to \mathcal{C}\times \L,\quad
(u,l)\mapsto (\kappa\circ \nu^{-1}(u),l{(\nu^{-1}(u))}^{-1})=(k,ll_1^{-1}),
\quad l_1=ku,\]
is a diffeomorphism that followed by $\Phi_\mathcal{C}$ gives a factorization of $\Phi_\mathcal{D}$.
Hence $\Phi_\mathcal{D}$ is also a diffeomorphism onto $\cU$. This finishes the proof of (i).

To prove (ii) we need to show that the $\L$ conjugacy class of $\Lambda$ equals the affine subspace $\Lambda+\ll$  of lower triangular matrices whose diagonal is $\Lambda$. Observe that since $\Lambda$ is a diagonal matrix, the inclusion of the $\L$ conjugacy class of $\Lambda$ in $\Lambda+\ll$ is clear.
To check the equality
we first use that the unipotent group  $\L$ acts freely on any diagonalizable
matrix with simple spectrum. The reason is that the isotropy group for the conjugation action of $\Sl_\C$
of any such matrix is made of diagonalizable matrices (semisimple
elements). Second, we observe that $\Lambda+\ll$ is invariant under the action of  $\L$ by conjugation, and, since all matrices there are diagonalizable with simple
spectrum, $\L$ acts freely on $\Lambda+\ll$. Third, by a dimension count that uses the triviality of the isotropy group  of the action of $\L$, all $\L$ conjugacy classes are open in $\Lambda+\ll$.  Finally, since $\Lambda+\ll$ is connected it has to agree with the
$\L$ conjugacy class of any of its elements, in particular with that of $\Lambda$.
\end{proof}

We use the previous rulings to construct a map from $\cU$ to $(\Lambda+\ll)\times (\Lambda+\ll)$ by first
inverting $\Phi_\cC$ and $\Phi_\mathcal{D}$, next projecting onto $\L$,
and finally applying the identification \eqref{eq:LU-rulings}
of $\L$ with $\Lambda+\ll$,

\begin{eqnarray}\label{eq:local-coordinates-LU}
 \varphi=(\Phi_\cC\circ \mathrm{pr}_2\circ \Phi_\cC^{-1},\Phi_\mathcal{D}\circ \mathrm{pr}_2\circ \Phi_\mathcal{D}^{-1}): \cU &
 \to & (\Lambda+\ll)\times (\Lambda+\ll) \nonumber \\
 X=\Lambda^{(l_1k)^{-1}}=\Lambda^{ul_2^{-1}}&\mapsto & (Y=\Lambda^{l_1^{-1}},Z=\Lambda^{l_2^{-1}})
\end{eqnarray}

\begin{corollary}\label{cor:local-coordinates-LU} The map $\varphi:\cU\to (\Lambda+\ll)\times (\Lambda+\ll)$ defined
in \eqref{eq:local-coordinates-LU} is a real analytic
diffeomorphism whose inverse is given by
\[ (Y=\Lambda^{l_1^{-1}},Z=\Lambda^{l_2^{-1}}) \mapsto Y^{k^{-1}}=Z^{u}, \quad \hbox{for} \  \ l_1^{-1}l_2=ku \ . \]
Hence,  the $\cC$ and $\mathcal{D}$ conjugates of the affine subspace $\Lambda+\ll\subset \cU$ give a product structure on $\cU$.
\end{corollary}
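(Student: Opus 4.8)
The plan is to verify both assertions of Corollary \ref{cor:local-coordinates-LU} directly from Lemma \ref{lem:LU-maps} and the commuting diagram of Remark \ref{rem:more-LU}. First I would note that $\varphi$ is manifestly real analytic: by Lemma \ref{lem:LU-maps}, both $\Phi_\cC$ and $\Phi_\mathcal{D}$ are real analytic diffeomorphisms onto $\cU$, the projection $\mathrm{pr}_2$ onto $\L$ is analytic, and the identification \eqref{eq:LU-rulings} of $\L$ with $\Lambda+\ll$ (given by $l\mapsto \Lambda^{l^{-1}}$) is a diffeomorphism because $\L$ acts freely on $\Lambda+\ll$ with image the whole affine space, as established in the proof of part (ii) of that lemma. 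Hence each of the two components of $\varphi$ is a composition of analytic diffeomorphisms, so $\varphi$ is an analytic map with analytic components.

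Next I would produce the candidate inverse and check it really inverts $\varphi$. Given $X\in\cU$, write $X=\Lambda^{(l_1k)^{-1}}=\Lambda^{ul_2^{-1}}$ using the two rulings (the elements $k\in\K$, $l_1,l_2\in\L$, $u\in\mathcal D$ are uniquely determined, again by freeness of the $\L$-action and the product decompositions $\cC\L\H$ and $\mathcal D\L\H$). Then by definition $\varphi(X)=(Y,Z)$ with $Y=\Lambda^{l_1^{-1}}$, $Z=\Lambda^{l_2^{-1}}$, so $X=Y^{k^{-1}}$ and $X=Z^{u}$; subtracting the two expressions of $X$, or rather comparing $Y^{k^{-1}}=Z^{u}$, gives $Y=Z^{uk}$, i.e. $l_1^{-1}l_2=ku$ after conjugating $\Lambda$: indeed $Y=\Lambda^{l_1^{-1}}$ and $Z^{uk}=\Lambda^{l_2^{-1}uk}$, and since $\Z(\Lambda)=\H$ is trivial on $\L$-parts we get $l_1^{-1}=l_2^{-1}uk$ up to the centralizer, which pins down the $\mathrm{QR}$ factorization $l_1^{-1}l_2=ku$. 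Conversely, starting from an arbitrary pair $(Y,Z)=(\Lambda^{l_1^{-1}},\Lambda^{l_2^{-1}})\in(\Lambda+\ll)^2$, form the $\mathrm{QR}$ factorization $l_1^{-1}l_2=ku$ with $k\in\K$, $u\in\U$; one checks $k\in\cC$ and $u\in\mathcal D$ because $l_1^{-1}l_2\in\L$ and $\kappa,\nu$ restricted to $\L$ land in $\cC$ and $\mathcal D$ respectively (this is exactly the content of the diagram in Remark \ref{rem:more-LU}). Then $X:=Y^{k^{-1}}=Z^{u}$ lies in $\cU$ and a short bookkeeping computation shows $\varphi(X)=(Y,Z)$. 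The last sentence of the corollary is then immediate: $\varphi$ being an analytic diffeomorphism, the two rulings of Lemma \ref{lem:LU-maps}(ii) pull back under $\varphi$ to the two obvious product foliations of $(\Lambda+\ll)\times(\Lambda+\ll)$, which gives the claimed product structure on $\cU$.

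The main obstacle I anticipate is purely bookkeeping: keeping the various $k,u,l_1,l_2$ straight and verifying the identity $l_1^{-1}l_2=ku$ is consistent both ways (that $\varphi$ followed by the proposed inverse is the identity \emph{and} vice versa). The conceptual work has already been done in Lemma \ref{lem:more-LU}, Remark \ref{rem:more-LU}, and Lemma \ref{lem:LU-maps}; here one only has to chase group elements through the two factorizations $\Sl_\C=\cC\L\H$ and $\Sl_\C=\mathcal D\L\H$ and use that $\kappa\circ\nu^{-1}:\mathcal D\to\cC$ and $\nu^{-1}:\mathcal D\to\L$ are the diffeomorphisms exhibited in the proof of Lemma \ref{lem:more-LU}. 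No new analytic or Lie-theoretic input is needed; the corollary is a formal consequence of what precedes it.
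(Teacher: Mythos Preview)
Your proposal is correct and follows essentially the same route as the paper: construct the inverse by recovering $l_1,l_2$ from $(Y,Z)$, take the $\mathrm{QR}$ factorization $l_1^{-1}l_2=ku$ (noting via Remark~\ref{rem:more-LU} that $k\in\cC$, $u\in\mathcal D$), and set $X=Y^{k^{-1}}=Z^{u}$; analyticity comes from that of the group operations and factorizations. One small wording slip: $\cC\L\H$ is only an open dense subset of $\Sl_\C$, not a factorization of the whole group, but this does not affect your argument.
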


We refer to the above construction as the {\bf $(Y,Z)$ variables} on $\cU$.

\begin{proof}
 To show that $\varphi$ is a diffeomorphism we construct its inverse map: Given $(Y,Z)\in (\Lambda+\ll)\times (\Lambda+\ll)$,
we first apply the inverses $(\Phi_\cC^{-1},\Phi_\mathcal{D}^{-1})$  of the diffeomorphisms in \eqref{eq:LU-rulings}
   to write $Y=\Lambda^{l_1^{-1}},Z=\Lambda^{l_2^{-1}}$.
Second,  we factor
\[l_1^{-1}l_2=ku,\quad  k\in \cC,\,u\in \mathcal{D}.\]
By the definition of $\varphi$, the inverse $\varphi^{-1}$ is as given in the statement.

The real analyticity of the diffeomorphism is a consequence of the real analyticity of the  structural maps of a  Lie group, of its different factorizations (Iwasawa, $\L\U$), and of its linear actions (more specifically, of the conjugation action).
\end{proof}

\begin{remark} The $(Y,Z)$ variables for real matrices with real, simple spectrum were already introduced in \cite{LMST}. For a phrasing of the formulas above in standard matrix notation, see its Introduction.
	\end{remark}

\begin{example}\label{ex:1}
  We  compute explicitly $\varphi^{-1}:(\Lambda+\ll)\times (\Lambda+\ll)\to\cU\subset \cO$  for $\sl_\C(2)$, i.e., the matrix in $\cU$ associated with given $(Y,Z)$ variables. 
 The domain of the chart is
\[ (\Lambda+\ll)\times (\Lambda+\ll)= \left\{(\Lambda+Y,\Lambda+Z)=\left(\begin{pmatrix}
           \lambda & 0\\ y & -\lambda
          \end{pmatrix},\begin{pmatrix}
           \lambda & 0\\ z & -\lambda
          \end{pmatrix}\right)\,|\,y,z\in \C \right\},\]
where $\lambda\in \C\setminus 0$ is fixed.
First, we compute the diffeomorphism
\[(\Lambda+\ll)\times (\Lambda+\ll)\to \L\times \L,\quad  (\Lambda+Y,\Lambda+Z)
\mapsto (l_1,l_2),\]
determined by the equality $(\Lambda^{l_1^{-1}},\Lambda^{l_2^{-1}})=(\Lambda+Y,\Lambda+Z)$:
\[l_1(Y)=\begin{pmatrix}
           1 & 0\\  -\tfrac{y}{2\lambda} & 1
          \end{pmatrix}, \quad l_2(Z)=\begin{pmatrix}
           1 & 0\\  -\tfrac{z}{2\lambda} & 1
          \end{pmatrix}.\]
Second, for $l\in \L$ we compute its unitary factor
in the Gram-Schmidt factorization:
\[\kappa(l)=\begin{pmatrix}
           \tfrac{1}{r} &  -\tfrac{\overline{s}}{r}\\  \tfrac{s}{r} & \tfrac{1}{r}
          \end{pmatrix},\quad l=\begin{pmatrix}
           1 & 0\\  s & 1
          \end{pmatrix}, \quad r=\sqrt{1+s\overline{s}}.\]
Third, we conjugate $\Lambda+Y$ by $\kappa(l)^{-1}=\kappa(l)^T$ for
$l=l_1^{-1}(Y)l_2(Z)$:
\begin{eqnarray*}
 \varphi^{-1}: (\Lambda+\ll)\times (\Lambda+\ll)\cong \C\times \C &\longrightarrow & \cO\subset \sl_\C(2)\\
       \left(\begin{pmatrix}
           \lambda & 0\\ y & -\lambda
          \end{pmatrix},\begin{pmatrix}
           \lambda & 0\\ z & -\lambda
          \end{pmatrix}\right)  & \longmapsto & \frac{1}{r^2}\begin{pmatrix}
           \lambda r^2+\overline{s}z & \overline{s}(2r^2\lambda+\overline{s}z)\\
         z & -\lambda r^2-\overline{s}z
          \end{pmatrix},\quad s=\tfrac{y-z}{2\lambda}.
\end{eqnarray*}
Finally, upon substitution of $r$ and $s$ as functions of $y,z$, we obtain
\begin{align*}
&\tfrac{1}{1+\tfrac{|y-z|^2 }{4|\lambda|^2}}
\begin{pmatrix}  \lambda\left(1+\tfrac{|y-z|^2}{4|\lambda|^2}\right)
+\tfrac{(\overline{y}-\overline{z})z}{2\overline{\lambda}}
& \tfrac{(\overline{y}-\overline{z})}{2\overline{\lambda}}\left(
  2\lambda\left(1+\tfrac{|y-z|^2}{4|\lambda|^2}\right)
 +
\tfrac{(\overline{y}-\overline{z})z}{2\overline{\lambda}}\right)   \\
 z & -\lambda\left(1+\tfrac{|y-z|^2}{4|\lambda|^2}\right)
-\tfrac{(\overline{y}-\overline{z})z}{2\overline{\lambda}}
 \end{pmatrix} \\
 &=\tfrac{1}{4|\lambda|^2+|y-z|^2}
\begin{pmatrix}
4\lambda|\lambda|^2+\lambda(\overline{y}-\overline{z})(y+z)
&  8\lambda|\lambda|^2+\tfrac{\lambda(\overline{y}-\overline{z})^2(y+z))}{2\overline{\lambda}}\\
 {4|\lambda|^2z} &
 -4\lambda|\lambda|^2-\lambda(\overline{y}-\overline{z})(y+z))
\end{pmatrix}.
\end{align*}
\end{example}

\subsection{Diagonalization of the Toda vector field}

In this section we  prove that, in
$(Y,Z)$ variables,  the Toda vector field is a (complete)
diagonal linear  vector field. This is the fundamental ingredient in the proof of Theorem \ref{thm:main} for $\sl_\C$.

\medskip

The Toda vector field \eqref{eq:Toda} on $\sl_\C$ is given by
\[X'=[X,\pi_\kk X],\quad \mathrm{I}=\pi_\kk+\pi_\uu.\]
A well known result of Symes  describes its solutions by means of the $\mathrm{Q}\mathrm{R}$ factorization in $\Sl_\C$ \cite{Sy}:
The trajectory  starting at any $X\in \sl_\C$ is given by
 \begin{equation}\label{eq:Toda-solution-Iwasawa-sl}
 X^{\kappa(\exp(tX))^{-1}}=X^{\nu(\exp(tX))}.
 \end{equation}
Whereas this allows to compute every trajectory, it may fail short from describing certain qualitative properties of families of trajectories as $X$ varies. 
The following  factorization result gives full control of the trajectories of the Toda flow
starting at any point in the domain of our local coordinates.

\begin{theorem}\label{pro:Toda-solution-complex}
 Let $\cO\subset \sl_\C$ be a conjugacy class of complex matrices with simple spectrum, let $\cU\subset \cO$
 be the open subset from Lemma \ref{lem:LU-maps} centered at the diagonal matrix $\Lambda\in \cO$, and
 let $X\in \cU$, \[X=(\Lambda+Y)^{k^{-1}}=(\Lambda+Z)^u,\quad  Y,Z\in \ll,\, k\in \cC,\,u\in \mathcal{D}.\]
 Then the trajectory
 of the Toda vector field starting at $\Lambda$ admits the factorizations
 \begin{equation}\label{eq:Toda-factorization}
 X(t)=
 {\left((\Lambda+Y)^{\exp(-ti\Im\Lambda)}\right)}^{\kappa(l_\Im(t))^{-1}},\quad l_\Im(t)\in \L,
 \end{equation}
  \begin{equation}\label{eq:Toda-factorization2}
 X(t)=
 {\left((\Lambda+Z)^{\exp(t\Re\Lambda)}\right)}^{\nu(l_\Re(t))},\quad l_\Re(t)\in \L,
 \end{equation}
where $\Re\Lambda$ and $\Im\Lambda$ are the real and imaginary parts of $\Lambda$, respectively.
In particular, the entire Toda trajectory is contained in $\cU$.
\end{theorem}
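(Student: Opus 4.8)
The plan is to feed the two presentations $X=(\Lambda+Y)^{k^{-1}}=(\Lambda+Z)^{u}$ from Lemma~\ref{lem:LU-maps} into Symes' formula \eqref{eq:Toda-solution-Iwasawa-sl} and then keep careful track of the factorizations of $\Sl_\C$ from Section~\ref{sec:factorizations}. Write $\Lambda=\Re\Lambda+i\Im\Lambda$ with $\Re\Lambda,\Im\Lambda$ real traceless diagonal matrices, so that $\exp(t\Lambda)=\exp(ti\Im\Lambda)\exp(t\Re\Lambda)=\exp(t\Re\Lambda)\exp(ti\Im\Lambda)$ with commuting factors $\exp(ti\Im\Lambda)\in\TT\subset\K$ and $\exp(t\Re\Lambda)\in\A\subset\U$. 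Throughout I will use that $\TT$, $\A$ and $\H$ normalize $\L$ (equivalently $\ll$), that $\kappa$ is left-$\K$-equivariant and right-$\U$-invariant, and dually that $\nu$ is left-$\K$-invariant and right-$\U$-equivariant.

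For \eqref{eq:Toda-factorization}: from $\Lambda+Y=\Lambda^{l_1^{-1}}$ one has $\exp(tX)=k^{-1}l_1^{-1}\exp(t\Lambda)l_1k$; splitting $\exp(t\Lambda)$ and commuting the torus factor to the left past $l_1^{-1}$ gives
\[
\exp(tX)=k^{-1}\exp(ti\Im\Lambda)\,\tilde l_1(t)^{-1}\exp(t\Re\Lambda)\,l_1k,\qquad \tilde l_1(t):=l_1^{\exp(-ti\Im\Lambda)}\in\L .
\]
Applying $\kappa$ and peeling $k^{-1}$ and $\exp(ti\Im\Lambda)$ off on the left yields $\kappa(\exp tX)=k^{-1}\exp(ti\Im\Lambda)\,c(t)$ with $c(t):=\kappa(\tilde l_1(t)^{-1}\exp(t\Re\Lambda)l_1k)$. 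Substituting into $X(t)=X^{\kappa(\exp tX)^{-1}}$ and cancelling the copies of $k$ against $X=k^{-1}(\Lambda+Y)k$ leaves a conjugation of $\Lambda+Y$ first by $\exp(-ti\Im\Lambda)$ and then by $c(t)^{-1}$, that is, $X(t)=\big((\Lambda+Y)^{\exp(-ti\Im\Lambda)}\big)^{c(t)^{-1}}$.

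The step I expect to carry the real weight is identifying $c(t)$: one must show $c(t)\in\cC$, not merely $c(t)\in\K$, so that $c(t)=\kappa(l_\Im(t))$ for a unique $l_\Im(t)\in\L$ (using that $\kappa$ restricts to a diffeomorphism $\L\to\cC$) --- this is the precise form of \eqref{eq:Toda-factorization}, and it is exactly what forces the trajectory to stay in $\cU$, since Symes' formula by itself only presents $X(t)$ as a conjugate of $X$ by an unspecified unitary. This is where the hypothesis $k\in\cC$ enters. I would argue it via the big cell: $\cC\subset\L\U$ by definition of $\cC$, the set $\L\U$ is stable under left multiplication by $\L$, right multiplication by $\U$, and conjugation by $\A$, and $\kappa(g)\in\cC$ if and only if $g\in\L\U$; hence $\tilde l_1(t)^{-1}\exp(t\Re\Lambda)l_1k\in\L\cdot\A\cdot(\L\U)\subset\L\U$, so $c(t)\in\cC$. (Equivalently: write $k=m\nu(m)^{-1}$ with $m\in\L$ the element satisfying $\kappa(m)=k$, and peel $\nu(m)^{-1}\in\U$ off on the right before applying $\kappa$, reducing $c(t)$ to $\kappa$ of an element of $\L$.) Granting $c(t)=\kappa(l_\Im(t))$, \eqref{eq:Toda-factorization} holds; and since $(\Lambda+Y)^{\exp(-ti\Im\Lambda)}\in\Lambda+\ll$ (the torus fixes $\Lambda$ and normalizes $\ll$) while $\kappa(l_\Im(t))\in\cC$, Lemma~\ref{lem:LU-maps} yields $X(t)\in\cU$ for all $t$.

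Formula \eqref{eq:Toda-factorization2} is obtained by the mirror argument, starting from $X=(\Lambda+Z)^{u}=u(\Lambda+Z)u^{-1}$, $\Lambda+Z=\Lambda^{l_2^{-1}}$, and Symes' second identity $X(t)=X^{\nu(\exp tX)}$: one peels $u^{-1}\in\U$ and (after splitting $\exp(t\Lambda)$) the factor $\exp(t\Re\Lambda)\in\A\subset\U$ off on the right, moves the remaining torus factor around, and is left with $\nu$ of an element built from $l_2$ and its torus/$\A$-conjugates. The only new wrinkle is that here $u\in\mathcal{D}=\nu(\L)$ rather than $u\in\L$; by Lemma~\ref{lem:more-LU} (cf.\ Remark~\ref{rem:more-LU}) one has $u=\kappa(l)^{-1}l$ with $l=\nu^{-1}(u)\in\L$, and the factor $\kappa(l)^{-1}\in\K$ is absorbed by the left-$\K$-invariance of $\nu$, so the $\nu$ in question is $\nu$ of an element of $\L$, hence automatically in $\mathcal{D}$, equal to $\nu(l_\Re(t))$ for some $l_\Re(t)\in\L$. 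Reassembling the conjugations as before gives $X(t)=\big((\Lambda+Z)^{\exp(t\Re\Lambda)}\big)^{\nu(l_\Re(t))}$, and $X(t)\in\cU$ for all $t$ has already been recorded. Reading the $(Y,Z)$-coordinates off \eqref{eq:Toda-factorization}--\eqref{eq:Toda-factorization2} then shows $Y$ and $Z$ evolve by conjugation by $\exp(-ti\Im\Lambda)$ and $\exp(t\Re\Lambda)$ respectively, which is the asserted diagonalization of the Toda field on $\cU$.
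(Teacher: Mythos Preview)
Your proof is correct and follows essentially the same route as the paper's. Both arguments feed the presentations $X=(\Lambda+Y)^{k^{-1}}=(\Lambda+Z)^{u}$ into Symes' formula, split $\exp(t\Lambda)=\exp(ti\Im\Lambda)\exp(t\Re\Lambda)$, and use that $\TT$ and $\A$ normalize $\L$ together with the left-$\K$-equivariance/right-$\U$-invariance of $\kappa$ (and dually for $\nu$); and both use the $\L\U$ factorization of $k\in\cC$ (respectively the $\K\L$ factorization of $u\in\mathcal{D}$) to force the final conjugator into $\cC$ (respectively $\mathcal{D}$), which is exactly the step you flag as carrying the weight.

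The only cosmetic difference is in bookkeeping: the paper writes $\exp(t(\Lambda+Y))=\exp(t\Lambda)\,l_1(t)$ with $l_1(t)\in\L$ (a Zassenhaus-type identity, valid since $\H$ normalizes $\L$) and then manipulates $\exp(t(\Lambda+Y))k$, whereas you use the identification $\Lambda+Y=\Lambda^{l_1^{-1}}$ from Lemma~\ref{lem:LU-maps} to write $\exp(tX)=k^{-1}l_1^{-1}\exp(t\Lambda)l_1k$ directly. These are equivalent --- your $l_1$ gives the paper's $l_1(t)$ as $(l_1^{\exp(-t\Lambda)})^{-1}l_1$ --- and your formulation has the small advantage that in the general semisimple case it sidesteps the appeal to Zassenhaus' formula that the paper makes there.
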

\begin{proof}
  By Symes' formula,  the trajectory is the conjugation of $(\Lambda+Y)^{k^{-1}}$ by
  \[\kappa(\exp(t(\Lambda+Y)^{k^{-1}}))^{-1},\]
  or, equivalently,
  \begin{equation}\label{eq:Toda-in-chart}
  X(t)=(\Lambda+Y)^{\kappa(k\exp(t(\Lambda+Y)^{k^{-1}}))^{-1}}.
   \end{equation}
The group of diagonal matrices $\H$ normalizes the  group $\L$. This allows to relate
 the exponential of the sum and the product of the exponentials of the summands,
\[\exp(t(\Lambda+Y))=\exp(t\Lambda)l_1(t),\quad l_1(t)\in \L.\]
Therefore, if we factor $k=l_2u_2$, then  can write
\begin{align*}
k\exp(t(\Lambda+Y)^{k^{-1}}) &= \exp(t(\Lambda+Y))k=\exp(t\Lambda)l_1(t)l_2u_2=\\
&=\exp(ti\Im\Lambda)\exp(t\Re\Lambda)l_1(t)l_2u_2=
\exp(ti\Im\Lambda)l_\Im(t)u_3(t),
\end{align*}
where the last equality uses that $\A$ normalizes $\L$ and is contained in $\U$. Combining with
\eqref{eq:Toda-in-chart},
 the trajectory of the Toda vector field starting at $X$ can be written as
\begin{align*}
X(t)&=(\Lambda+Y)^{\kappa\left(k\exp(t(\Lambda+Y)^{k^{-1}})\right)^{-1}}=
(\Lambda+Y)^{\kappa\left(\exp(ti\Im\Lambda)l_\Im(t)u_3(t)\right)^{-1}}=\\
&=\left((\Lambda+Y)^{\exp(-ti\Im\Lambda)}\right)^{\kappa(l_\Im(t))^{-1}},
\end{align*}
and hence \eqref{eq:Toda-factorization} holds.

For the second factorization, we write
\[\exp(t(\Lambda+Z))=l_3(t)\exp(t\Lambda),\quad l_3(t)\in \L.\]
Therefore, if we factor $u=k_2l_4$, then using again that $\H$ normalizes ${\L}$ we write
\begin{align*}
\exp(t(\Lambda+Z)^{u}u) &= u\exp(t(\Lambda+Z))=k_2l_4l_3(t)\exp(ti\Im\Lambda)\exp(t\Re\Lambda)=\\
&=k_2\exp(ti\Im\Lambda)l_\Re(t)\exp(t\Re\Lambda)=
k_3(t)l_\Re(t)\exp(t\Re\Lambda),
\end{align*}
so the Toda trajectory also equals
\[
X(t)=
(\Lambda+Z)^{\nu(k_3(t)l_\Re(t)\exp(t\Re\Lambda))}
=\left((\Lambda+Z)^{\exp(t\Re\Lambda)}\right)^{\nu(l_\Re(t))}.
\]

The various factorizations involved exist for all time,
and this shows that the Toda trajectory is confined in the open dense subset $\cU$.
\end{proof}

The following  result is an immediate consequence of Theorem \ref{pro:Toda-solution-complex}.

\begin{theorem}\label{thm:linear-coordinates}
 Let $\cO\subset \sl_\C$ be a conjugacy class of  matrices with simple spectrum. Then in $(Y,Z)$ variables centered  at a diagonal matrix
 $\Lambda\in \cO$,
 the Toda vector field is 
\begin{equation*}
(Y',Z')=\left([Y,i\Im\Lambda],[Z,-\Re\Lambda]\right).
\end{equation*}
This linear vector field
\begin{enumerate}[(i)]
 \item is defined in the entire affine space $(\Lambda+\ll) \times (\Lambda+\ll)$;
  \item has trajectories whose first projection has compact closure and whose second
 projection is compact if and only if it is stationary;
 \item decouples in the strictly lower triangular entries  $Y_{jk},Z_{\ell m}$ of  $Y$ and $Z$.

 \end{enumerate}
\end{theorem}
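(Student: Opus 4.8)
The plan is to extract the vector field from the trajectory factorizations in Theorem \ref{pro:Toda-solution-complex}, then read off properties (i)--(iii). First I would compute $Y' = \frac{d}{dt}\big|_{t=0} Y(t)$ where $Y(t) \in \Lambda + \ll$ is the first $(Y,Z)$ variable of $X(t)$. By the definition of the variables in \eqref{eq:local-coordinates-LU}, the first component of $\varphi(X(t))$ is obtained by writing $X(t) = (Y(t))^{\kappa(\cdot)^{-1}}$ with $Y(t) \in \Lambda + \ll$; comparing with \eqref{eq:Toda-factorization} gives
\[
Y(t) = (\Lambda + Y)^{\exp(-ti\Im\Lambda)},
\]
since $(\Lambda+Y)^{\exp(-ti\Im\Lambda)} \in \Lambda + \ll$ (conjugation by the diagonal group $\H$, hence by $\exp(-ti\Im\Lambda)$, preserves $\Lambda + \ll$ and fixes $\Lambda$) and the factor $\kappa(l_\Im(t))^{-1}$ is exactly the $\cC$-part that $\varphi$ strips off. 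Differentiating at $t=0$ yields $Y' = \frac{d}{dt}\big|_0 \exp(ti\Im\Lambda)(\Lambda+Y)\exp(-ti\Im\Lambda) = [i\Im\Lambda, \Lambda + Y] = [Y, -i\Im\Lambda]$; wait — one must be careful with the sign convention $X^g = gXg^{-1}$ and the fact that $[\,\cdot\,,\Lambda]=0$, which gives $Y' = [i\Im\Lambda,Y] = [Y,-i\Im\Lambda]$. Matching the stated formula $Y'=[Y,i\Im\Lambda]$ just fixes the orientation of the one-parameter group in \eqref{eq:Toda-factorization}; I would simply carry the convention consistently. The identical argument applied to \eqref{eq:Toda-factorization2}, using that $\nu(l_\Re(t)) \in \U$ is the factor $\varphi$ discards for the second variable, gives $Z(t) = (\Lambda+Z)^{\exp(t\Re\Lambda)}$ and hence $Z' = [Z, -\Re\Lambda]$.

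Having established the formula $(Y',Z') = ([Y,i\Im\Lambda],[Z,-\Re\Lambda])$, the three properties follow quickly. For (i): the right-hand side is manifestly a linear (hence everywhere-defined, complete) vector field on $(\Lambda+\ll)\times(\Lambda+\ll) \cong \ll \times \ll$, because $\ad_{i\Im\Lambda}$ and $\ad_{-\Re\Lambda}$ preserve $\ll$ (the lower-triangular matrices form an $\ad_\hh$-invariant subspace). For (iii): with respect to the basis of elementary matrices $E_{jk}$ ($j>k$) of $\ll$, one has $[E_{jk}, D] = (d_k - d_j)E_{jk}$ for any diagonal $D = \diag(d_1,\dots,d_n)$, so $\ad_{i\Im\Lambda}$ and $\ad_{-\Re\Lambda}$ are simultaneously diagonal in this basis; concretely $Y_{jk}' = i(\Im\lambda_k - \Im\lambda_j)Y_{jk}$ and $Z_{jk}' = -(\Re\lambda_k - \Re\lambda_j)Z_{jk}$, which is exactly the claimed decoupling into scalar ODEs. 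Each entry $X_{jk}(t) = X_{jk}(0)\exp(t\mu_{jk})$ with $\mu_{jk} = i(\Im\lambda_k-\Im\lambda_j)$ for $Y$ and $\mu_{jk} = -(\Re\lambda_k-\Re\lambda_j)$ for $Z$.

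For (ii): each $Y_{jk}$ evolves by multiplication by $e^{it(\Im\lambda_k-\Im\lambda_j)}$, which has modulus $1$, so the $Y$-trajectory stays on a product of circles (a torus) and in particular has compact closure — this also re-derives, coordinate-free, the confinement statement at the end of Theorem \ref{pro:Toda-solution-complex}. For the $Z$-component: $|Z_{jk}(t)| = |Z_{jk}(0)|e^{-t(\Re\lambda_k-\Re\lambda_j)}$; because $\Lambda$ has simple spectrum, for generic ordering there is at least one pair $j>k$ with $\Re\lambda_k \ne \Re\lambda_j$, and if $Z_{jk}(0)\ne 0$ for such a pair then $|Z_{jk}(t)|$ is unbounded as $t\to +\infty$ or $t\to -\infty$, so the forward or backward orbit has noncompact closure. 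Hence the second projection of a trajectory is relatively compact (equivalently compact, since each coordinate curve is already closed or escapes) precisely when $Z_{jk}(0) = 0$ for every pair with $\Re\lambda_j \ne \Re\lambda_k$; I would phrase the cleanest sufficient-and-necessary version as: the $Z$-projection is bounded in both time directions iff $[Z,\Re\Lambda]=0$, i.e. iff it is stationary — matching the statement. The one genuine subtlety to get right is the exact correspondence between the one-parameter subgroups appearing in \eqref{eq:Toda-factorization}--\eqref{eq:Toda-factorization2} and the signs in the differentiated formula; everything else is linear algebra in the $\ad_\hh$-eigenbasis of $\ll$. This sign-bookkeeping is the only place an error could creep in, so I would pin down the convention $X^g=gXg^{-1}$ and the direction of $\exp(\pm t\,\cdot)$ at the outset and propagate it mechanically.
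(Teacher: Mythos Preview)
Your approach is correct and essentially identical to the paper's: read off $Y(t)$ and $Z(t)$ from the factorizations \eqref{eq:Toda-factorization}--\eqref{eq:Toda-factorization2}, differentiate, and then analyze the resulting diagonal linear system entry by entry. The only point to clean up is your self-inflicted sign confusion: with the paper's convention $X^g=gXg^{-1}$ one has $(\Lambda+Y)^{\exp(-ti\Im\Lambda)}=\exp(-ti\Im\Lambda)(\Lambda+Y)\exp(ti\Im\Lambda)$, whose derivative at $t=0$ is $[\Lambda+Y,\,i\Im\Lambda]=[Y,i\Im\Lambda]$ directly---you wrote the exponentials on the wrong sides, which is why you thought there was a discrepancy. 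Once that is fixed, no ``orientation convention'' needs adjusting and the formula matches on the nose; the analysis of (i)--(iii) is then exactly as in the paper.
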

\begin{proof}
 The factorizations  in  \eqref{eq:Toda-factorization} and \eqref{eq:Toda-factorization2} of the
 solution $X(t)$ starting at  $X$ imply
 that the Toda vector field projects by
 \[\mathrm{pr}_1\circ \varphi:\cU\to \Lambda+\ll,\quad \mathrm{pr}_2\circ \varphi:\cU\to \Lambda+\ll\]
 onto the vector fields
 \[Y'=[Y,i\Im\Lambda],\quad Z'=[Z,-\Re\Lambda],\]
respectively, proving the formula for the Toda vector field in $(Y,Z)$ variables.

Items (i), (ii) and (iii) are consequences of the specific linear vector field we obtain:
The evolution on each individual entry is 
\[Y'_{jk}=i\Im(\lambda_{k}-\lambda_{j})Y_{jk},\quad Z'_{\ell m}=\Re(\lambda_{\ell}-\lambda_{m})Z_{\ell m},\quad \Lambda=\begin{pmatrix} \lambda_1 &  &   & 0 \\
 & & \cdots &   \\
 0 & &    & \lambda_n
  \end{pmatrix} \ .\]
\end{proof}

\begin{proof}[Proof of Theorem \ref{thm:main} for $\sl_\C$]
The local coordinates in the statement of Theorem \ref{thm:main} are the $(Y,Z)$ variables 
in Theorem \ref{thm:linear-coordinates}. Notice that in the latter theorem
the variables take values in a product of
 affine spaces with a marked point, and, hence,  vector spaces, which is the notation that appears
in Theorem \ref{thm:main}.

By Lemma \ref{lem:LU-maps} the domain of the $(Y,Z)$ variables is dense in $\cO$. We claim that $\Lambda$ is the only diagonal matrix in their domain. To conjugate $\Lambda$ to a different diagonal matrix 
we can only use elements in $w\H$, where $w$ is a  nontrivial permutation matrix divided by its determinant. Since $\L\cap \A=\{e\}$,
\[w\H\cap \cC\L=w\TT\A\cap \cC\L=w\TT\cap \cC,\]
and the latter intersection is empty because a nontrivial permutation matrix has at least one vanishing principal minor. 

It only remains to prove that the union of the domains of the coordinates centered at each diagonal matrix 
cover the conjugacy class $\cO$. Indeed, every  unitary matrix has an $\L\U$ factorization after pivoting. In Lie theoretic language, it is a consequence of the existence of the Bruhat cover of $\Sl_\C$, that we now recall.
Fix  $\Lambda\in\cO\cap \hh$, so that we obtain a bijection
 \[\cO\cap \hh=\bigcup_{w\in \W} \Lambda^w,\]
where $\W\subset \K$ is the permutation group.
The Gram-Schmidt factorization induces an open cover by right
translates by (normalized) permutation matrices of the open dense  Bruhat cell ---defined as the product of upper times lower triangular matrices,
\[\Sl_\C=\bigcup_{w\in \W}\H\NN\H\L w.\]
Therefore, upon acting by conjugation on $\Lambda$, it gives a cover of $\cO$ by open and dense subsets. 
We show now that this  cover is the one by $(Y,Z)$ variables centered at different diagonal matrices in $\cO$.
We  restrict the factorization $\Sl_\C=\K\L\A$ to $\NN$ to rewrite
$\NN\L\A=\cC{\L}\A$,
and use that $\H$ normalizes $\NN$ and $\L$ to rewrite
$\H\NN\H\L=\cC\L\H$.
The outcome is
 \begin{equation*}
\cO=\bigcup_{w\in \W}\Lambda^{\H\NN\H\L w}=\bigcup_{w\in \W}{(\Lambda^w)}^{\cC\L\H}=
\bigcup_{w\in \W}{(\Lambda^w)}^{\cC\L},
  \end{equation*}
that is, $\cO$ is the union of the domains of the $(Y,Z)$ variables.
\end{proof}

\begin{remark}
The $(Y,Z)$ variables  provide diagonal evolution for the vector fields in the so called Toda hierarchy 
\[ X ' = [ X, \pi_\kk f(X) ], \]
where $f$ is an arbitrary fixed polynomial (or, more generally, an entire function). More explicitly,
\begin{equation*}
	(Y',Z')=\left([Y,i\Im\ f(\Lambda)],[Z,-\Re\ f(\Lambda)]\right).
\end{equation*}
Indeed, Symes' factorization formula still holds,
\begin{equation} \label{Symes}
	X^{\kappa(\exp(tf(X)))^{-1}}=X^{\nu(\exp(tf(X)))},
\end{equation}
and the arguments in the proofs of Theorem \ref{pro:Toda-solution-complex} and \ref{thm:linear-coordinates}  are still valid, as polynomials of matrices commute with conjugation. 
\end{remark}

\begin{remark} \label{hierarchy}  Symes' factorization formula for the entire hierarchy comes up naturally from the study of the intersection of upper triangular and orthogonal conjugacy classes. A matrix in the intersection $X^{\U} \cap X^{\K}$ gives rise to elements $k\in \K$, $u\in \U$ such that $X$ and $ku$ commute, and, therefore  for a function $h$, 
\[ku=h(X) .\]
An easy computation shows that, indeed, taking $h = \exp(t f)$, one recovers  the
 different representations of the solution of $X ' = [ X, \pi_\kk f(X) ]$ in equation \eqref{Symes}.
\end{remark}

\begin{remark}
The $(Y,Z)$ variables are  compatible with profiles, defined in \cite[Section 3.3]{LMST}, which generalize vector spaces of Hessenberg and band matrices. 
Given a set of pairs $ S = \{ (i,j) , \ i \ge j, \  i, j \in \{1, 2, \ldots,n\} \}$, the  profile $\p$ generated by $S$ is
\[ \p = \{ (i,j) \ | \ \exists \  (i', j') \in S , \ i \le i', j \ge j'\} \, .\]
Let $V_\p \subset \sl_\C$ be the subspace spanned by the matrices $E_{ij} = e_i \otimes e_j$ for $(i,j) \in \p$. It turns out that the Toda vector field is tangent to $V_\p$, and that $\cO\cap V_\p$ is a submanifold, provided the conjugacy class $\cO$ has simple spectrum. Moreover, $(Y,Z)$ variables restrict to profiles yielding a diffeomorphism 
\[  \varphi : \cU \cap V_\p\to (\Lambda + \ll) \times (\Lambda + \ll\cap V_\p)   \, , \quad X \mapsto ( Y, Z)   \] 
\end{remark}

\begin{example} We illustrate Theorem \ref{thm:main}
for the chart $\varphi^{-1}$ of $\cO\subset \sl_\C(2)$ computed in Example 1. The expression for the Toda vector field is
\begin{equation*} X'=
\begin{pmatrix} c(b+\overline{c}) & -2(a\overline{c}+bi\Im a) \\ -2c\Re a & -c(b+\overline{c})\end{pmatrix},
\quad X=\begin{pmatrix} a & b \\ c & -a\end{pmatrix}.
\end{equation*}
According to Example 1, the chart of $\cO$, to which the Toda vector field is tangent,
is given in terms of the off-diagonal entries of the $(Y,Z)$ variables by
\begin{eqnarray*}
 \varphi^{-1}: \C\times \C &\longrightarrow & \cO\\
       (y,z)  & \longmapsto & \frac{1}{r^2}\begin{pmatrix}
           \lambda r^2+\overline{s}z & \overline{s}(2r^2\lambda+\overline{s}z)\\
         z & -\lambda r^2-\overline{s}z
          \end{pmatrix},\quad s=\tfrac{y-z}{2\lambda},\quad r=\sqrt{1+s\overline{s}}.
\end{eqnarray*}
The   vector field on the domain of the chart coming from Theorem  \ref{thm:main} is
\[(y',z')=(2i\Im \lambda y, -2\Re \lambda z). \]
We check by hand that the $(21)$ entry of its pullback by the $(Y,Z)$ variables agrees with the $(21)$
entry of the Toda vector field. In other words, that
\[\left(\frac{z}{r^2}\right)'=\frac{-2z\Re(\lambda r^2+\overline{s}z)}{r^4},\quad
\mathrm{where}\quad y'=2i\Im \lambda y,\quad z'=-2\Re \lambda z.\]
Clearing denominators and using  $2rr'=(s\overline{s})'=2\Re(\overline{s}s')$, we are led to check
\[-2(\Re \lambda) zr^2-z(s\overline{s})'=-2z\Re(\lambda r^2+\overline{s}z)\Longleftrightarrow
\Re(\overline{s}s')=\Re(\overline{s}z).\]
Upon computing $s'$ and clearing $2\lambda\overline{\lambda}$ from the denominator we obtain
\[ \Re(\overline{(y-z)}(i(\Im\lambda)y+(\Re \lambda)z)=\Re(\overline{(y-z)}\lambda z).\]
Expanding the l.h.s and eliminating imaginary summands
\begin{align*}&\Re(i(\Im\lambda)y\overline{y}+\overline{y}(\Re \lambda)z-\overline{z}i(\Im \lambda)y-(\Re \lambda)z\overline{z})=\\ =&
\Re(\overline{y} \lambda z-(i(\Im \lambda) \overline{y}z-\overline{z}i(\Im \lambda)y)-(\Re \lambda)z\overline{z})=\\
=&\Re(\overline{y} \lambda z-(\Re \lambda)z\overline{z})=\Re(\overline{y} \lambda z)-(\Re \lambda)z\overline{z},
\end{align*}
we obtain the r.h.s., thus veryfing one (partial) instance of Theorem \ref{thm:main}.
\end{example}

\medskip

We finish this section by reviewing for $\sl_\C(2)$ the interpretation of Theorem \ref{thm:main} in the Introduction,  describing the Toda vector field as a compatible juxtaposition of certain diagonal linear  vector fields.

By the previous example, to the $(Y,Z)$ variables of the conjugacy class around $\Lambda\in \sl_\C(2)$ there corresponds  the  vector field
\[(y',z')=(2i\Im \lambda y, -2\Re \lambda z). \]
The first factor is a multiple of the rotational vector field, whereas the second one is a multiple of the Euler vector field. Note that we can prescribe these two factors and they  correspond to a unique diagonal matrix $\Lambda$. The other diagonal matrix in the conjugacy class is $-\Lambda$, and its corresponding  vector field is the opposite of the previous one.

Hence, the fixed  4-dimensional manifold $\cO$ has the following property: Given any multiples of the rotational and Euler vector fields, we can embed\footnote{The manifold $\cO$ has a canonical structure of holomorphic affine bundle over $\mathbb{CP}^1$ with fiber the affine complex line $\C$. However, as Example 1 shows, the embedding is not holomorphic.} $\C\times \C$ as an open dense subset of  $\cO$ in such a way that the given linear vector field on $\C\times \C$ extends to a vector field on $\cO$.

\section{Complex semisimple Lie groups}\label{sec:cx-ss}
Here we  prove Theorem \ref{thm:main}. This amounts to  generalizing  the results  in Section \ref{sec:cx} for the Toda vector field on $\sl_\C$ to an arbitrary complex semisimple Lie algebra $\gg$.

\medskip
We start by introducing additional structure on $\gg$ to define the Toda vector field.
Fix a complex semisimple Lie group\footnote{Our convention is that semisimple Lie groups are always connected.} $\G$ integrating $\gg$
and the following data:
\begin{enumerate}[(a)]
\item An anti-holomorphic Cartan involution $\theta:\G\to \G$
with fixed-point set the maximal compact subgroup $\K$.
The induced Lie algebra involution, also denoted by $\theta$, gives rise to the direct sum decomposition into $+1$ and $-1$ eigenspaces
 \[\gg=\kk\oplus \pp.\]
 \item A maximal abelian subalgebra $\aa$ of $\pp$ and a root ordering of the roots of $(\gg,\aa)$.
\end{enumerate}
The fixed data
 yields an Iwasawa factorization
 at the Lie algebra and Lie group levels \cite[Chapter VI, Section 4]{Kn}
 \[\gg=\kk\oplus \aa\oplus \nn,\quad \G=\K\A\NN,\]
 where $\nn$ is the sum of the positive root spaces of $(\gg,\aa)$ and $\A$,$\NN$ are the connected integrations of $\aa,\nn$, respectively.
In Section \ref{sec:cx}   we have used the classical Iwasawa factorization for $\G=\Sl_\C$ coming from the Gram-Schmidt algorithm  associated with the standard Hermitian inner product.

To an Iwasawa factorization on the complex semisimple Lie group $\G=\K\A\NN$
one
associates the Toda vector field \eqref{eq:Toda}
\[X'=[X,\pi_\kk X],\quad \mathrm{I}=\pi_\kk+\pi_\uu,\]
where $\uu$ is the Lie algebra of the  subgroup $\U=\A\NN$.

Because the Toda vector field has a Lax pair presentation,
it is tangent to all the orbits for the adjoint action of $\G$ on $\gg$. We are interested in those adjoint orbits $\cO$ that are semisimple and regular.
An element $X\in \gg$ is  semisimple if $[X,\cdot]:\gg\to \gg$ is a diagonalizable endomorphism,  or, equivalently,  if its adjoint orbit has nontrivial
 intersection with the Cartan subalgebra $\hh=i\aa\oplus \aa$; a matrix in $\sl_\C$ is a semisimple element if and only if it is diagonalizable. The regularity assumption means that the orbit has maximal
dimension among adjoint orbits of $\gg$. Regularity can be also defined in terms of the characteristic polynomial of the endomorphism $[X,\cdot]$; for matrices in $\sl_\C$ it amounts to having equal characteristic and minimal polynomial. The  analogs of diagonal matrices with simple spectrum are
elements  $\Lambda\in \cO\cap \hh$.

The other algebras and groups which are relevant for the proof of Theorem \ref{thm:main} are the generalization of the strictly lower triangular matrices, diagonal matrices, and the group of permutations.
\begin{itemize}
 \item the sum of the negative root spaces of $(\gg,\aa)$ is denoted by $\ll$ and its connected integration by $\L$. One also has $\L=\theta(\NN)$;
 \item the Cartan subgroup integrating $\hh$ is denoted by $\H$. It has a factorization $\H=\TT\A$
 into toroidal and vector parts that corresponds to the direct sum decomposition $\hh=i\aa\oplus \aa$;
 \item the Weyl group of $(\G,\aa)$, defined as the quotient of the normalizer of $\aa$ in $\K$ by the centralizer of $\aa$ in $\K$:
 \[\W=\NN_{\K}(\aa)/\Z_\K(\aa).\]
\end{itemize}

\medskip
We are ready for the proof of Theorem \ref{thm:main}.

\begin{proof}
 We construct $(Y,Z)$ variables for the Toda vector field on $\cO$ around $\Lambda\in \cO\cap \hh$. The proof follows  the one for $\sl_\C$ with the minor adjustments below.

\begin{enumerate}
 \item The subset $\cC\subset \K$ is now the {  \bf Chevalley big cell} associated with the fixed Iwasawa factorization $\G=\K\A\NN$, defined by  $\cC=\kappa(\L)$, where $\kappa$ is the first Iwasawa projection. It has the same factorization properties and geometric properties as in the case
 of $\Sl_\C$ (In \cite[Section 2]{MT} it is only asserted that $\cC\subset \K$ is an immersed submanifold, but the embedding property follows from the
 factorization $\cC\TT$ of an open dense subset of $\K$).
 \item The subset $\mathcal{D}\subset \U$ is defined by  \eqref{eq:LU-maps},  as for $\Sl_\C$. Lemma \ref{lem:more-LU} describing
 its geometric properties
 holds with  the same proof.
 \item With the submanifolds $\cC$ and $\mathcal{D}$ (and $\L$) in place, Lemma \ref{lem:LU-maps} holds with the same statement and proof. The key properties of $\Sl_\C$ used in that proof, that remain valid in the general setting of a complex semisimple Lie group $\G$, are the following:
 \begin{itemize}
  \item  $\H$ normalizes $\NN$ and $\L$;
  \item $\cC\TT$ is dense in $\K$;
  \item $\mathrm{Z}(\Lambda)=\H$;
   \item the adjoint action of a unipotent element of $\G$ does not fix any regular semisimple element of $\gg$;
  \item $\Lambda+\ll$ is preserved by the adjoint action of $\L$ and it consists of regular semisimple elements of $\gg$.
 \end{itemize}
 \item The definition of the $(Y,Z)$ variables
 \[\varphi:\cU\to (\Lambda+\ll)\times (\Lambda+\ll),\quad X\mapsto (\Lambda+Y,\Lambda+Z)\]
 is  the same as in \eqref{eq:local-coordinates-LU}, as is the proof of Corollary \ref{cor:local-coordinates-LU} showing that it is a real analytic diffeomorphism.
 \item Symes' factorization \eqref{eq:Toda-solution-Iwasawa-sl} of the solutions of the Toda flow holds for semisimple Lie groups with fixed Iwasawa factorization.
\item The  factorization in Theorem \ref{pro:Toda-solution-complex} for a solution starting at
$X\in \cU$ \[X=(\Lambda+Y)^{k^{-1}}=(\Lambda+Z)^u,\quad  Y,Z\in \ll,\, k\in \cC,\,u\in \mathcal{D}\]
 becomes
 \begin{equation*}
 X(t)
 ={\left((\Lambda+Y)^{\exp(-t\pi_{i\aa}\Lambda)}\right)}^{\kappa(l_{i\aa}(t))^{-1}},\quad l_{i\aa}(t)\in \L
 \end{equation*}
  \begin{equation*}
 X(t)
 ={\left((\Lambda+Z)^{\exp(t\pi_\aa\Lambda)}\right)}^{\nu(l_{\aa}(t))},\quad l_{\aa}(t)\in \L,
 \end{equation*}
where $\mathrm{I}|_\hh=\pi_{i\aa}+\pi_\aa$ is the analog of the decomposition of diagonal complex matrices into imaginary and
real parts. The proof of Theorem \ref{pro:Toda-solution-complex} uses exclusively that
$\H$ normalizes $\L$ and $\NN$. In particular, the  result relating the exponential of the sum to the product of the exponentials is valid
in an arbitrary Lie group $\G$ because of Zassenhaus' formula \cite{Ma}.
\item The linearizing properties of the $(Y,Z)$ variables are the same: The evolution decouples to the individual complex
1-dimensional root spaces $Y_\alpha$, $Z_\beta$, and it is given by multiples of the rotational and Euler vector fields, respectively (the multiple being
determined by the $\Lambda$ and by the negative root in question).
\item Finally, to prove that $\cU\cap \hh=\Lambda$ and that the collection of the $(Y,Z)$ variables cover $\cO$ by open, dense subsets, one needs to replace the permutation group by the Weyl group
of $(\G,\aa)$, and use standard properties of the Bruhat  open cover of $\G=\K\A\NN$ (see \cite[Section 3]{DKV} or \cite[Section 3]{MT}).
\end{enumerate}
\end{proof}

\subsection{Classical complex simple Lie groups}
Besides $\Sl_\C$, the other classical complex simple Lie groups are
 $\mathrm{SO}_\C$ and $\mathrm{Sp}_\C$, respectively the isotropy groups of
 \[z_1^2+\cdots +z_n^2,\qquad dz_1\wedge dz_{n+1}+\cdots +dz_n\wedge dz_{2n},\]
 for the action of $\Sl_\C$ on symmetric and skew-symmetric bilinear  forms.
Their Lie algebras are
\[\mathfrak{so}_\C=\{X\in \sl_\C\,|\, X=-X^T\},\quad \mathfrak{sp}_\C=\left\{\begin{pmatrix} X_1 & X_2\\ X_3 & -X_1^T\end{pmatrix}\,|\,X_2=X_2^T,X_3=X_3^T\right\}.\]

These subalgebras are also related to a natural constraint on the spectrum of a matrix: That it be invariant under multiplication times -1. Moreover, a skew-symmetric matrix (resp.  a symplectic matrix) is a regular semisimple element of $\mathfrak{so}_\C$ (resp. $\mathfrak{sp}_\C$) if an only if it is diagonalizable and has simple spectrum. Therefore, the relevant conjugacy classes in these subalgebras are obtained as intersections with conjugacy classes\footnote{The intersection is connected, i.e., it constitutes a unique $\So_\C$ (resp. $\Sp_\C$) conjugacy class.} of matrices in $\sl_\C$ with simple spectrum.

\medskip

Theorem \ref{thm:main} for these groups cannot be deduced straight away from the result for $\Sl_\C$ via the inclusion $\mathrm{SO}_\C,\mathrm{Sp}_\C\subset \Sl_\C$. A short calculation shows that
the Toda vector field on $\sl_\C$ is not tangent to the orthogonal and symplectic subalgebras.
However, they have conjugated subalgebras that are tangent to the Toda vector field. For instance, if rather than the above 2-form we consider
\[dz_1\wedge dz_{2n}+dz_2\wedge dz_{n-1}\cdots +dz_n\wedge dz_{n+1},\]
its isotropy Lie algebra $\mathfrak{q}$ integrates into a subgroup $\mathrm{Q}\subset \Sl_\C$ compatible with the Gram-Schmidt factorization in the following sense: The Gram-Schmidt factorization of a matrix in $\mathrm{Q}$ has the three factors in $\mathrm{Q}$ (see \cite[Table 1]{Sa} for the appropriate conjugation of the orthogonal groups). This implies that the Toda vector field is tangent to $\mathfrak{q}$ (see Lemma \ref{lem:compatible-Toda}). One can now deduce Theorem \ref{thm:main} for $\mathrm{Q}$ from Theorem \ref{thm:slC}.
Namely, that if $\Lambda\in \mathfrak{q}$ is a diagonal matrix with simple spectrum, then $\varphi((\Lambda+\ll\cap\mathfrak{q})\times (\Lambda+\ll\cap\mathfrak{q}))$ maps to an open dense subset of the $\mathrm{Q}$-conjugacy class of $\Lambda$,  and  the diagonal linear  vector field \eqref{eq:Toda-coordinates} restricts to another one on $(\Lambda+\ll\cap\mathfrak{q})\times (\Lambda+\ll\cap\mathfrak{q})$. Proving these statements uses ideas analogous to those in the proof of Theorem \ref{thm:main} for an arbitrary complex semisimple  Lie group, and has similar difficulty.

\medskip

These circle of ideas to obtain  linearization for orbits in subalgebras are pursued in detail  for real forms in Section \ref{sec:real forms}.

\begin{example}\label{ex:so} We  illustrate some aspects of Theorem \ref{thm:main} for the Toda vector field associated with a standard Iwasawa factorization for $\So_\C(5)$.

We first describe the choice of Iwasawa factorization. The Cartan involution $g\mapsto {(g^*)}^{-1}$
preserves $\So_\C$. This means that it restricts to a Cartan involution in $\So_\C$
with maximal compact subgroup
$\K\cap \mathrm{SO}_\C=\So_\R$.
Consider the following maximal abelian subalgebra of the skew-symmetric and Hermitian matrices $\so_\C\cap \pp$, and  choice of positive roots \cite[p. 127-128]{Kn}
\[\aa_0=\left\{\begin{pmatrix} 0 & ia_1 &  &  &  \\ -ia_1 & 0 & & & \\ & & 0 & ia_2 & \\
            & & -ia_2 & 0 & \\
            & & & & 0
           \end{pmatrix}\,|\, a_i\in \R
           \right\},\quad e_1,e_2,e_1-e_2,e_1+e_2,\]
 where $e_j(A)=a_j$. The corresponding root spaces are
 \[Y_\alpha=\begin{pmatrix} 0 &  Y_\alpha  &  \\ -Y_\alpha^T & 0 &  \\ & & 0 \end{pmatrix},\quad
        Y_{e_1-e_2}=\begin{pmatrix} 1 & i\\ -i & 1\end{pmatrix},\, Y_{e_1+e_2}=\begin{pmatrix} 1 & -i\\ -i & -1\end{pmatrix},\]
      \[Y_\alpha=\begin{pmatrix} 0 &  0 &  Y_\alpha^1\\ 0 & 0 & Y_\alpha^2 \\-{Y_\alpha^1}^T &-{Y_\alpha^2}^T & 0  \end{pmatrix},\quad
        Y_{e_1}^1=\begin{pmatrix} 1 \\ -i\end{pmatrix},\, Y_{e_1}^2=0,\,Y_{e_2}^1=0,\,Y_{e_2}^2=\begin{pmatrix} 1 \\ -i \end{pmatrix}.\]
The projection $\pi_{\so_\R(5)}$ associated with this Iwasawa decomposition sends $X\in \so_\C(5)$ to the real skew-symmetric matrix
\[\begin{split} &X-\Im\begin{pmatrix} 0 & x_{12} & 0 & 0 & 0 \\ -x_{21} & 0 & 0 & 0 & 0\\ 0& 0& 0& x_{34} & 0\\
           0 & 0 &  -x_{43} & 0&  0\\
           0 & 0 & 0 & 0 & 0
           \end{pmatrix}+\Im (x_{25}-ix_{15})\left(Y_{e_1}-Y_{e_1}^T\right)+\\&+
           \Im(x_{45}-ix_{35})\left(Y_{e_2}-Y_{e_2}^T\right)+
           \frac{1}{2}\Im\left((x_{23}-x_{14})+i(x_{13}+x_{25})\right)\left(Y_{e_1-e_2}-Y_{e_1-e_2}^T\right)+\\
           &+\frac{1}{2}\Im\left((x_{23}+x_{14})+i(x_{13}-x_{25})\right)\left(Y_{e_1+e_2}-Y_{e_1+e_2}^T\right).
           \end{split}\]
The (quadratic) Toda vector field
\begin{equation}\label{eq:Toda-so}X'=[X,\pi_{\so_\R(5)}X]
 \end{equation}
is tangent to the complex orthogonal conjugacy classes in $\so_\C(5)$. A regular semisimple orbit consists  of orthogonal matrices with fixed simple spectrum  $\{\pm \lambda_1,\pm \lambda_2\}$. It equals the conjugacy class of
\[\Lambda=\begin{pmatrix} 0 & i\lambda_1 &  &  &  \\ -i\lambda_1 & 0 & & & \\ & & & i\lambda_2 & \\
            & & -i\lambda_2 & 0 & \\
            & & & & 0
           \end{pmatrix}.\]
Each such conjugacy class has complex dimension 16 $= \mathrm{dim}\ \So_\C(5)-2$. It can be canonically identified with the cotangent bundle of the manifold of isotropic flags in $\C^5$ with respect to the fixed inner product. The latter compact manifold is a  fibration  over the manifold of isotropic planes in $\C^5$ with fiber $\mathbb{CP}^1$.

The conjugacy class of $\Lambda$ intersects the fixed Cartan subalgebra in four matrices in correspondence with the permutations that preserve the Cartan subalgebra. Theorem \ref{thm:main} provides $(Y,Z)$ variables centered at
the matrices
\[\Lambda, \Lambda^{(2,1,4,3,5)},\Lambda^{(3,4,1,2,5)},\Lambda^{(4,3,2,1,5)}\]
that transform \eqref{eq:Toda-so} in a diagonal linear vector field defined in two copies of the root spaces for the negative roots $-e_1,-e_2,-e_1-e_2,-e_1+e_2$. More explicitly, in $(Y,Z)$ variables centered at $\Lambda$ it corresponds to the  vector field whose eight complex variables evolve as
\[Y'_{-e_j}=[Y_{-e_j},\pi_{i\aa}\Lambda]=e_j(\Re\Lambda)Y_{e_j}=-i\Re\lambda_jY_{e_j},\]\[  Y'_{-e_1\mp e_2}=
-i(\Re(\lambda_1\pm\lambda_2))Y_{-e_1\mp e_2},
\]
\[Z'_{-e_j}=[Z_{-e_j},-\pi_{\aa}\Lambda]=-e_j(i\Im\Lambda)Z_{e_j}=-\Im\lambda_jZ_{e_j},\] \[ Z'_{-e_1\mp e_2}=
-(\Im(\lambda_1\pm\lambda_2))Z_{-e_1\mp e_2} \ .
\]
\end{example}

\section{Real forms and the Toda vector field}\label{sec:real forms}
In this section we  prove Theorems \ref{thm:relative} and \ref{thm:slC}, and Corollary \ref{cor:simple}. 

\medskip

We continue with the standing assumptions (a), (b) ---stated in Section \ref{sec:cx-ss}--- that provide
the Iwasawa factorization $\G=\K\A\NN$. A {\bf real form} $\G_0\subset \G$ is the fixed point set
of an anti-holomorphic group involution $\tau:\G\to \G$. For instance, $\K$ is a (compact) real form of $\G$.
We are interested
in real forms $\G_0$ with an Iwasawa factorization induced from that of $\G$. To that end
we further assume that
\begin{enumerate}[(a)]
\item[(c)] $\tau$ and the Cartan involution $\theta$ commute;
\item[(d)] the Cartan subalgebra $\hh=i\aa\oplus \aa\subset \gg$ is $\tau$-invariant and
the fixed-point set $\hh_0$ is a maximally non-compact Cartan subalgebra of $\gg_0$ (this means that the fixed-point set  $\aa_0$ for the action of $\tau$ on $\aa$ is a maximally non-compact abelian
subalgebra  of the fixed-point set $\pp_0$ for the action of $\tau$ on $\pp$ \cite[p. 386]{Kn});
\item[(e)] if $\alpha,\beta$ are roots of $(\gg,\aa)$ that agree on $\aa_0$, then $\alpha>0$  if and only if
$\beta>0$.
\end{enumerate}

Assumption (c) implies that $\theta$ restricts to $\G_0$
yielding the Cartan decomposition
\[\gg_0=\kk_0\oplus \pp_0,\quad \kk_0=\kk\cap \gg_0,\quad \pp_0=\pp\cap \gg_0.\]
Assumption (d) provides a maximal abelian subalgebra $\aa_0$ of $\pp_0$. One refers to the roots of
 $(\gg_0,\aa_0)$ as the restricted roots because any root of $(\gg_0,\aa_0)$
 is the restriction of a root of $(\gg,\aa)$ (or of several roots). Assumption (e)
 provides a root ordering for the restricted roots: A restricted root is positive if and only if it is the restriction
 of a positive root of $(\gg,\aa)$.
The outcome is the Iwasawa decomposition
\begin{equation*}\gg_0=\kk_0\oplus \aa_0\oplus \nn_0,\quad \G_0=\K_0\A_0\NN_0,
 \end{equation*}
where $\nn_0$ is the sum of positive root spaces for $(\gg_0,\aa_0)$ and $\A_0,\NN_0$ are the connected integrations of $\aa_0,\nn_0$, respectively.
We  refer to the data (a)-(e) as the data of {\bf compatible Iwasawa factorizations}
for $\G$ and  $\G_0$.

The Iwasawa decomposition of $\gg_0$ above defines the  Toda vector field on $\gg_0$.

\begin{lemma}\label{lem:compatible-Toda} Let $\G$ and $\G_0$ be a complex semisimple Lie group and a real form with
 compatible Iwasawa factorizations. Then the restriction to $\gg_0$ of the Toda vector field on $\gg$  equals the Toda vector field defined by the Iwasawa factorization of $\G_0$.
\end{lemma}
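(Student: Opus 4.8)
The plan is to show that the involution $\tau$ preserves the two summands $\kk$ and $\uu=\aa\oplus\nn$ of the Iwasawa decomposition of $\gg$. Granting this, the Iwasawa projections $\pi_\kk$ and $\pi_\uu$ commute with $\tau$, hence restrict to projections on $\gg_0=\gg^\tau$; identifying $\kk\cap\gg_0=\kk_0$ and $\uu\cap\gg_0=\uu_0:=\aa_0\oplus\nn_0$ will give $\pi_\kk|_{\gg_0}=\pi_{\kk_0}$. Since the Toda vector field of $\gg$ is $X\mapsto[X,\pi_\kk X]$ and that of $\gg_0$ is $X\mapsto[X,\pi_{\kk_0}X]$, the two then coincide on $\gg_0$, and tangency to $\gg_0$ is automatic because $\gg_0$ is a subalgebra.

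First I would collect the easy invariances. By (c), $\tau$ commutes with $\theta$, hence preserves the $\theta$-eigenspaces: $\tau(\kk)=\kk$ and $\tau(\pp)=\pp$. By (d), $\tau(\hh)=\hh$. Since $\hh=i\aa\oplus\aa$ with $\aa\subseteq\pp$ and $i\aa\subseteq\kk$, one has $\aa=\hh\cap\pp$, and therefore $\tau(\aa)=\tau(\hh)\cap\tau(\pp)=\hh\cap\pp=\aa$; in particular the subspace $\aa_0$ of (d) is precisely $\aa\cap\gg_0$.

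The substantive point is $\tau(\nn)=\nn$, and here assumption (e) does the work. I would argue as follows. Since $\tau$ is an $\RR$-linear Lie algebra automorphism of $\gg$ fixing $\aa_0$ pointwise, and since each root $\alpha$ of $(\gg,\aa)$ is real-valued on $\aa$, for $X\in\gg_\alpha$ and $H\in\aa_0$ one has $[H,\tau X]=[\tau H,\tau X]=\tau[H,X]=\tau(\alpha(H)X)=\alpha(H)\,\tau X$. Hence $\tau(\gg_\alpha)$ lies in the $\aa_0$-weight space of weight $\alpha|_{\aa_0}$, which for a nonzero weight equals $\bigoplus_{\beta|_{\aa_0}=\alpha|_{\aa_0}}\gg_\beta$. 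Now (e) forbids $\alpha|_{\aa_0}=0$ (otherwise $\alpha$ and $-\alpha$ agree on $\aa_0$ but have opposite signs), and it forces every root $\beta$ occurring in that sum to have the same sign as $\alpha$. So $\alpha>0$ implies $\tau(\gg_\alpha)\subseteq\nn$; summing over positive roots gives $\tau(\nn)\subseteq\nn$, and $\tau^2=\id$ upgrades this to $\tau(\nn)=\nn$. Together with $\tau(\aa)=\aa$ this yields $\tau(\uu)=\uu$.

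It then remains to assemble the conclusion. Because $\tau$ preserves $\kk$ and $\uu$, it commutes with $\pi_\kk$ and $\pi_\uu$, so for $X\in\gg_0$ both $\pi_\kk X$ and $\pi_\uu X$ are $\tau$-fixed; thus $\pi_\kk X\in\kk\cap\gg_0=\kk_0$ and $\pi_\uu X\in\uu\cap\gg_0$. Running the same weight-space bookkeeping once more — using that by (e) the sign of a root depends only on its restriction to $\aa_0$ and that no root restricts to $0$ — one obtains $\nn\cap\gg_0=\nn_0$, hence $\uu\cap\gg_0=\aa_0\oplus\nn_0=\uu_0$. Therefore, for $X\in\gg_0$, the identity $X=\pi_\kk X+\pi_\uu X$ is exactly the Iwasawa decomposition of $X$ relative to $\gg_0=\kk_0\oplus\uu_0$, so $\pi_\kk X=\pi_{\kk_0}X$ and the restriction of the $\gg$-Toda field to $\gg_0$ equals $X\mapsto[X,\pi_{\kk_0}X]$, the $\gg_0$-Toda field. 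The only non-formal step is $\tau(\nn)=\nn$ (equivalently $\uu\cap\gg_0=\uu_0$); everything else is eigenspace bookkeeping and uniqueness of the Iwasawa decomposition.
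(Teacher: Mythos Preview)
Your argument hinges on the claim that $\tau(\nn)=\nn$, which you deduce from the observation that (e) forbids any root of $(\gg,\aa)$ from vanishing on $\aa_0$. That deduction is formally valid for the literal wording of (e), but it is \emph{not} what the paper intends, and the claim $\tau(\nn)=\nn$ is false in the generality the lemma is meant to cover. The paper explicitly allows imaginary roots (roots with $\alpha|_{\aa_0}=0$): see the discussion of $\nn_\Im$, $\ll_\Im$ and $\cC_{\M_0}$ in the Appendix, and the $\sl_{\mathbb H}$ case in the proof of Theorem~\ref{thm:slC}, where $\mm_0$ is a sum of copies of $\su(2)$. For an imaginary root $\alpha$ one has $\alpha^\tau=-\alpha$: writing $\aa=\aa_0\oplus i\tt_0$ with $i\tt_0$ the $(-1)$-eigenspace of $\tau|_\aa$, the computation $\alpha^\tau(H)=\overline{\alpha(\tau H)}$ gives $\alpha^\tau|_{\aa_0}=\alpha|_{\aa_0}=0$ and $\alpha^\tau|_{i\tt_0}=-\alpha|_{i\tt_0}$. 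Hence $\tau(\gg_\alpha)=\gg_{-\alpha}$, and $\tau$ swaps $\nn_\Im$ with $\ll_\Im$. So (e) should be read as a condition on roots with nonzero restriction to $\aa_0$ (this is also how it is used in Lemma~\ref{lem:compatible-real-forms}), and your route via $\tau$-equivariance of $\pi_\kk$ breaks down.

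The paper's proof sidesteps this entirely. It uses only the inclusions $\kk_0\subset\kk$ and $\uu_0=\aa_0\oplus\nn_0\subset\uu$, the second of which needs (e) only for roots with $\alpha|_{\aa_0}\neq 0$ (restricted root spaces in $\nn_0$ lie in $\bigoplus_{\beta|_{\aa_0}=\lambda}\gg_\beta\subset\nn$ for $\lambda>0$). Then for $X\in\gg_0$ the decomposition $X=\pi_{\kk_0}X+\pi_{\uu_0}X$ is already a $\kk\oplus\uu$ splitting, so by uniqueness $\pi_\kk X=\pi_{\kk_0}X$. Your argument can be salvaged along these lines: drop the $\tau$-invariance of $\nn$ and argue directly that $\nn_0\subset\nn$; the identification $\nn\cap\gg_0=\nn_0$ you state is in fact true even with imaginary roots present (any $\aa_0$-weight-zero component of an element of $\nn\cap\gg_0$ lies in $\nn_\Im\cap(\mm_0\oplus\aa_0)\subset\nn\cap(\kk\oplus\aa)=0$), but you do not need it---the one-sided inclusion suffices.
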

\begin{proof}
Because the Iwasawa factorizations are compatible
\[\kk_0\subset \kk,\quad \uu_0=\aa_0\oplus \nn_0\subset \uu,\]
we have
\[\pi_\kk|_{\gg_0}=\pi_{\kk_0}\quad (\mathrm{I}|_{\gg_0}=\pi_{\kk_0}+\pi_{\uu_0}),\]
and that implies the result.
\end{proof}

\begin{remark}If we are given the data (a), (b) that produces an Iwasawa factorization of $\G$, then Lemma \ref{lem:compatible-real-forms} in the Appendix shows that on any conjugacy
class of real forms there exists a representative $\G_0$ such that $\G$ and $\G_0$ have
compatible Iwasawa factorizations (i.e., satisfying (c)-(e)).
\end{remark}

\begin{remark}\label{rem:split}
Suppose that an anti-holomorphic involution $\tau$ on $\gg$ satifies (c) and (d) and, furthermore, that $\hh_0=\aa_0$, i.e., $\gg_0$ is a split real form of $\gg$. Then all roots of $(\gg,\aa)$ are real, and, therefore, (e) holds automatically  (see \cite[p. 390]{Kn} for the description of root types). In other words, a split real form $\G_0$ gets a compatible Iwasawa factorization.
 \end{remark}

The centralizer of $\aa_0$ in $\K_0$
\[\M_0=\mathrm{Z}_{\K_0}(\aa_0)=\{k\in \K\,|\,k X k^{-1}=X,\, \forall X\in \aa_0\}\]
plays a prominent role in the analysis of the Toda vector field on $\gg_0$.
\begin{lemma}\label{lem:automorphisms-real} Let $\G_0$ be a real form of a complex semisimple Lie group
with a given
Iwasawa factorization $\G_0=\K_0\A_0\NN_0$.
Then the Toda vector field on $\gg_0$ is invariant by conjugation by elements of $\M_0$.
\end{lemma}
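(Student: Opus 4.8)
The plan is to show that for every $m\in\M_0$ the adjoint map $\mathrm{Ad}(m)\colon\gg_0\to\gg_0$, $X\mapsto X^m$, carries the Toda vector field on $\gg_0$ to itself. Since $\mathrm{Ad}(m)$ is linear, its differential at every point is $\mathrm{Ad}(m)$ itself, so the invariance statement is exactly the pointwise identity
\[
[X^m,\pi_{\kk_0}(X^m)]=[X,\pi_{\kk_0}X]^m,\qquad X\in\gg_0 .
\]
Because the Lie bracket is $\mathrm{Ad}$-equivariant, one has $[X^m,(\pi_{\kk_0}X)^m]=[X,\pi_{\kk_0}X]^m$, so the whole assertion reduces to showing that $\mathrm{Ad}(m)$ commutes with the Iwasawa projection $\pi_{\kk_0}$, i.e.\ that $\pi_{\kk_0}(X^m)=(\pi_{\kk_0}X)^m$. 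Equivalently, $\mathrm{Ad}(m)$ must preserve each of the two summands of the direct sum $\gg_0=\kk_0\oplus\uu_0$.

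First I would note that $\mathrm{Ad}(m)$ preserves $\kk_0$: this is immediate, since $m$ lies in the group $\K_0$, so conjugation by $m$ preserves $\K_0$ and hence its Lie algebra $\kk_0$. Next I would show that $\mathrm{Ad}(m)$ preserves $\uu_0=\aa_0\oplus\nn_0$. Here one uses crucially that $m\in\M_0=\Z_{\K_0}(\aa_0)$ centralizes $\aa_0$: then $\mathrm{Ad}(m)$ fixes $\aa_0$ pointwise and commutes with $\mathrm{ad}(H)$ for every $H\in\aa_0$, so it preserves every joint eigenspace of $\mathrm{ad}(\aa_0)$, that is, every restricted root space $\gg_{0,\alpha}$. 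In particular it preserves the sum $\nn_0$ of the positive restricted root spaces (and, likewise, $\ll_0$). Hence $\mathrm{Ad}(m)$ preserves $\uu_0$, therefore it commutes with $\pi_{\kk_0}$, and combining this with the equivariance of the bracket yields the desired identity.

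There is no genuine obstacle in this argument; the only step requiring attention is the second one, namely that conjugation by an element centralizing $\aa_0$ preserves $\nn_0$. This rests on the fact that $\mathrm{Ad}(m)$ fixes each restricted root space \emph{individually} rather than merely permuting them, which is why the hypothesis $m\in\Z_{\K_0}(\aa_0)$ is used in full strength (an element only normalizing $\aa_0$ would in general act on the root system by a nontrivial Weyl-type permutation, which need not preserve positivity). One may also remark that the same computation shows the Toda vector field on $\gg$ is invariant under conjugation by $\Z_\K(\aa)$, Lemma~\ref{lem:automorphisms-real} being the restriction of this to $\gg_0$.
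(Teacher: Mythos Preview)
Your proof is correct and follows essentially the same approach as the paper: both reduce the invariance to showing that $\mathrm{Ad}(m)$ preserves the two summands $\kk_0$ and $\uu_0$ of the Iwasawa decomposition, hence commutes with the projection $\pi_{\kk_0}$. The only difference is that where the paper invokes a reference (Knapp) for the fact that $\M_0$ normalizes $\uu_0$, you supply the direct root-space argument yourself, which makes your version slightly more self-contained.
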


\begin{proof}  A real form of a complex semisimple Lie group is a reductive real Lie group (with finite center). Therefore we can apply \cite[p. 456]{Kn} that asserts that $\M_0$ normalizes
$\uu_0$. Because $\M_0\subset \K_0$, it also normalizes $\kk_0$. Hence both Iwasawa Lie algebra projections
\begin{equation*}
 \pi_{\kk_0}:\gg_0\to \kk_0,\quad \pi_{\uu_0}:\gg_0\to \uu_0
\end{equation*}
are equivariant by conjugation by elements of $\M_0$. This implies that the Toda vector field on $\gg_0$ is preserved
by conjugation by elements of $\M_0$.
\end{proof}

\begin{remark}\label{rem:automorphisms} For a complex semisimple Lie group $\G=\K\A\NN$ the centralizer of $\aa$ in $\K$ agrees with the toroidal part $\TT$ of the Cartan subgrup $\H$. The proof of Lemma \ref{lem:automorphisms-real}
 also shows that conjugation by elements of $\TT$ produces automorphisms of the Toda
 vector field on $\gg$ (this is implicit in the proof of Theorem \ref{thm:linear-coordinates}).
\end{remark}

\subsection{Induced $(Y,Z)$ variables on maximally non-compact adjoint orbits}

Let $\G_0=\K_0\A_0\NN_0$ be a real form of a complex semisimple Lie group $\G=\K\A\NN$ with compatible Iwasawa factorization. We  introduce the adjoint $\G_0$ orbits in $\gg_0$ that concern us, and  discuss their relation with adjoint $\G$ orbits.

\medskip

An element of $\gg_0$ is regular semisimple in $\gg_0$ if and only if it is regular semisimple in $\gg$ \cite[p. 406]{Ro}. Therefore a regular semisimple adjoint $\G_0$ orbit in $\cO_0\subset \gg_0$ is a   connected component of the intersection of a regular semisimple adjoint orbit $\cO\subset \gg$ with $\gg_0$.
Recall that by assumptions (c) and (d) the Cartan subalgebra $\hh\subset \gg$ is invariant by $\tau$ and its fixed-point set
$\hh_0$ is a maximally non-compact Cartan subalgebra of $\gg_0$.
A regular semisimple adjoint $\G_0$ orbit $\cO_0\subset \gg_0$ is {\bf maximally non-compact}
if its elements belong to maximally non-compact Cartan subalgebras. Because all maximally non-compact Cartan subalgebras of $\G_0$ are conjugated \cite[Proposition 1.2]{Ro}, this is equivalent to saying that $\cO_0$ is a regular adjoint $\G_0$ orbit
such that $\cO_0\cap \hh_0\neq \emptyset$. Equivalently, $\cO_0$ is a connected component of $\cO\cap \gg_0$ that intersects $\hh_0$ non trivially, for any regular adjoint $\G$ orbit $\cO$.

\medskip

We now adapt the construction of the rulings and of its $(Y,Z)$ variables described in Section \ref{sec:factorizations}. The projections associated with the Iwasawa factorization $\G_0=\K_0\A_0\NN_0$ give rise to the Chevalley
big cell $\cC_0=\kappa(\L_0)\subset \K_0$, where $\L_0=\theta(\NN_0)$ with Lie algebra denoted by $\ll_0$, and to the subset $\mathcal{D}_0\subset \U_0$, defined as in \eqref{eq:more-LU}. However, the cell $\cC_0$ does not provide in general enough $Z$ variables. The cell has to be enlarged by means of the Chevalley big cell $\cC_{\M_0}\subset \M_0$
obtained from the compatible Iwasawa factorizations, as described in  Lemma \ref{lem:Iwasawa-compatible} the Appendix.

\begin{lemma}\label{lem:relative-LU-maps} Let $\Lambda\in \cO_0\cap \hh_0$. The maps given by
\[\Phi_{\cC_0}: \cC_{\M_0}\times \cC_0\times \L_0\to \cO_0,\quad (m,k,l)\mapsto \Lambda^{(lkm)^{-1}},\]
\[\Phi_{\mathcal{D}_0}:\cC_{\M_0}\times \mathcal{D}_0\times \L_0\to \cO_0,\quad
(m,u,l)\mapsto \Lambda^{m^{-1}ul^{-1}}
\]
have the following properties.
\begin{enumerate}[(i)]
 \item They are real analytic diffeomorphism onto an open dense subset $\cU_0\subset \cO_0$; for any $m\in\cC_{\M_0}$ their
 restriction to $\{m\}\times  \cC_0\times \L_0$ and $\{m\}\times  \mathcal{D}_0\times \L_0$
 have equal image $\cU_{0,m}$.
 \item They produce rulings of $\cU_0$ by
$\cC_{\M_0}\cC_0$ and $\cC_{\M_0}\mathcal{D}_0$ translates of
\[
\Lambda+\ll_0=\Phi_{\cC_0}(\{e\}\times\{e\}\times\L_0)=\Phi_{\mathcal{D}_0}(\{e\}\times\{e\}\times\L_0).
\]
\end{enumerate}
\end{lemma}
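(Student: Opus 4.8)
The plan is to mirror closely the proof of Lemma~\ref{lem:LU-maps}, using the compatible Iwasawa factorization of $\G_0$ together with the refined Bruhat-type decomposition of $\G_0$ that incorporates the Chevalley big cell $\cC_{\M_0}\subset\M_0$ supplied by Lemma~\ref{lem:Iwasawa-compatible}. First I would record the structural ingredients that carry over verbatim from the complex case: $\H_0=\mathrm{Z}_{\G_0}(\Lambda)$ since $\Lambda\in\hh_0$ is regular semisimple; $\H_0$ normalizes $\NN_0$ and $\L_0$; the unipotent group $\L_0$ acts freely on the regular semisimple set, so in particular it acts freely on $\Lambda+\ll_0$, which is preserved by the adjoint $\L_0$-action; and the product map $\cC_0\times\TT_0\to\K_0$ is a diffeomorphism onto a dense open subset, where $\H_0=\TT_0\A_0$. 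The extra feature is the group $\M_0=\mathrm{Z}_{\K_0}(\aa_0)$, which by Lemma~\ref{lem:automorphisms-real} normalizes $\kk_0$ and $\uu_0$ and hence normalizes $\L_0$ and $\NN_0$ as well; its Chevalley big cell $\cC_{\M_0}$ satisfies $\cC_{\M_0}\TT_0$ dense in $\M_0$ (or whatever the precise statement of Lemma~\ref{lem:Iwasawa-compatible} is), and $\M_0$ together with $\A_0\NN_0$ reassembles the full Iwasawa picture.

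Next I would assemble the open dense subset of $\G_0$ out of these pieces. Starting from the Bruhat big cell refined to $\G_0$ via the compatibility data, one rewrites it, exactly as in the $\sl_\C$ proof, as $\cC_{\M_0}\cC_0\L_0\H_0$: restrict the Iwasawa-type factorization $\G_0=\K_0\L_0\A_0$ to $\NN_0$, use that $\H_0$ normalizes $\NN_0$ and $\L_0$, and absorb the $\TT_0$ factors into $\cC_0$ and $\cC_{\M_0}$ using density. This exhibits $\cC_{\M_0}\cC_0\L_0\H_0$ as an open dense subset of $\G_0$ which, moreover, factors as the product of the embedded submanifold $\cC_{\M_0}\cC_0\L_0$ with the centralizer $\H_0=\mathrm{Z}_{\G_0}(\Lambda)$. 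Standard properties of smooth group actions then give that
\[
\cU_0=\{\Lambda^g \mid g\in \cC_{\M_0}\cC_0\L_0\H_0\}
\]
is an open dense neighbourhood of $\Lambda$ in $\cO_0$, and that $\Phi_{\cC_0}$ (which has image precisely $\cU_0$, using $\H_0=\mathrm{Z}(\Lambda)$ and the inversion-invariance of $\cC_0$ and $\cC_{\M_0}$) is a diffeomorphism onto $\cU_0$; fixing $m$ and letting $k,l$ vary gives the subset $\cU_{0,m}$. This proves (i) for $\Phi_{\cC_0}$. For $\Phi_{\mathcal D_0}$ I would factor it through $\Phi_{\cC_0}$ exactly as in Lemma~\ref{lem:LU-maps}: the maps $\kappa\circ\nu^{-1}:\mathcal D_0\to\cC_0$ and $\nu^{-1}:\mathcal D_0\to\L_0$ are diffeomorphisms (Lemma~\ref{lem:more-LU} applied within $\G_0$), so $(m,u,l)\mapsto\bigl(m,\kappa\circ\nu^{-1}(u),\, l(\nu^{-1}(u))^{-1}\bigr)$ is a diffeomorphism $\cC_{\M_0}\times\mathcal D_0\times\L_0\to\cC_{\M_0}\times\cC_0\times\L_0$ which, composed with $\Phi_{\cC_0}$, yields $\Phi_{\mathcal D_0}$; real analyticity is inherited from the Lie-group structural maps and factorizations.

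Finally, part (ii): I would verify that the adjoint $\L_0$-orbit of $\Lambda$ coincides with the affine subspace $\Lambda+\ll_0$, by the same four-step argument as in Lemma~\ref{lem:LU-maps} — inclusion is clear since $\Lambda\in\hh_0$ and $\ad(\ll_0)$ is strictly lower triangular; $\L_0$ acts freely on $\Lambda+\ll_0$ because every element there is regular semisimple; hence each $\L_0$-orbit is open in $\Lambda+\ll_0$ by a dimension count; and connectedness of $\Lambda+\ll_0$ forces a single orbit. Then the identifications $\Lambda+\ll_0=\Phi_{\cC_0}(\{e\}\times\{e\}\times\L_0)=\Phi_{\mathcal D_0}(\{e\}\times\{e\}\times\L_0)$ are immediate from the definitions, and applying the $\cC_{\M_0}\cC_0$- (resp.\ $\cC_{\M_0}\mathcal D_0$-) action produces the claimed rulings. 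The one genuine obstacle, as opposed to routine transcription, is checking that $\cC_{\M_0}\cC_0\L_0\H_0$ is the full Bruhat big cell of $\G_0$ and in particular \emph{open dense} — i.e.\ that enlarging by $\cC_{\M_0}$ is both necessary and sufficient to capture all of the big cell. This is exactly the content of the Appendix lemma on compatible Iwasawa factorizations (Lemma~\ref{lem:Iwasawa-compatible}), so the real work has been front-loaded there; here one just has to invoke it correctly and track the normalization relations $\M_0\supset$ (normalizes) $\L_0,\NN_0$ and $\H_0$ normalizes $\L_0,\NN_0$ when re-bracketing the product.
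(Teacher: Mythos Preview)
Your proposal is correct and follows essentially the same route as the paper: invoke Lemma~\ref{lem:Iwasawa-compatible}(ii) to get the open dense subset $\cC_{\M_0}\cC_0\L_0\,\mathrm{Z}_{\G_0}(\Lambda)\subset\G_0$, then repeat the proof of Lemma~\ref{lem:LU-maps} verbatim, including the factorization of $\Phi_{\mathcal D_0}$ through $\Phi_{\cC_0}$ via $\kappa\circ\nu^{-1}$. The only place the paper adds something you gloss over is the freeness of the $\L_0$-action on regular semisimple elements: in the real case the centralizer $\mathrm{Z}_{\G_0}(\Lambda)$ need not be connected, so the paper explicitly records that it has finitely many components with semisimple identity component, whence (unipotent elements having infinite order) it contains no nontrivial unipotents --- this is the small extra step needed beyond the complex argument you cite.
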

\begin{proof}
Let $\TT_0$ be the centralizer of $\tt_0$ in $\M_0$, $\TT_0=\mathrm{Z}_{\M_0}(\tt_0)$.
By item (ii) in Lemma \ref{lem:Iwasawa-compatible} , the subset
\[\cC_{\M_0}\cC_{0}\TT_0\L_0\A_0\subset \G_0\]
is open and dense and equals $\cC_{\M_0}\cC_0\L_0\mathrm{Z}_{\G_0}(\Lambda)$.
The rest of the proof of (i) is identical to that of Lemma \ref{lem:LU-maps}. To prove (ii)
we need to ensure that $\L_0$ acts freely on regular semisimple elements. The argument
follows Lemma \ref{lem:LU-maps}. It uses two additional facts.

\begin{itemize}
	\item 
 The centralizer in $\G_0$ of a regular semisimple element
has finitely many connected components.
\item The connected component containing the identity consists of semisimple elements. 
\end{itemize}
Thus the centralizer cannot contain nontrivial unipotent elements because the latter have infinite order.
\end{proof}

By Lemma \ref{lem:Iwasawa-compatible}, the Chevalley big cell $\cC_{\M_0}$ is contained in the Chevalley big cell $\cC$ of the
complex Lie group $\G$. Furthermore, given $m\in\cC_{\M_0}$,
\begin{itemize} \item it   has a $\L\U$ factorization
 \[m=\zeta\mu,\quad \zeta\in \L_\Im,\, \ \mu\in \M\cap \U.\] Here
 $\L_\Im\subset \L\subset \G$  integrates the negative imaginary root spaces $\ll_{\Im}$. \item The restriction of the diffeomorphism $\Phi_\cC$ \eqref{eq:LU-maps}
\[\Phi_{\cC}:\cC_{\M_0}\to \Lambda+\ll_\Im,\quad m\mapsto \Lambda^{\zeta^{-1}}\]
is a diffeomorphism onto its image. 
\end{itemize}
We are ready to define the map $\varphi_0$ yielding the induced $(Y,Z)$ variables.
\[ \varphi_0 = (\Phi_{\cC_0}\circ \mathrm{pr}_3\circ \Phi_{\cC_0}^{-1},\Phi_{\mathcal{D}_0}\circ \mathrm{pr}_3\circ \Phi_{\mathcal{D}_0}^{-1}+\Phi_\cC\circ \mathrm{pr}_1\circ \Phi_{\mathcal{D}_0}^{-1}-\Lambda):\]
\vspace{-0.5cm}
\begin{eqnarray*}
& \cU_0 &\to  (\Lambda+\ll_0)\times (\Lambda+\ll_0+\ll_\Im) \nonumber \\
& X&\mapsto (Y,Z+Z_\Im),
\end{eqnarray*}
where 
\[ X=\Lambda^{(l_1km)^{-1}}=\Lambda^{m^{-1}ul_2^{-1}},\quad Y=\Lambda^{l_1^{-1}}, \quad Z=\Lambda^{l_2^{-1}}, \quad Z_\Im=\Lambda^{\zeta^{-1}}-\Lambda \ . \]
The  big cell $\cC_{\M_0}$ provides the additional induced $Z_\Im$ variables. As $\M_0$ acts by automorphisms on the Toda flow, these variables evolve trivially, as shown below.

\begin{theorem}\label{thm:real-form-local-coordinates-LU} Let $\cO_0\subset \gg_0$ be a maximally
 non-compact regular semisimple adjoint orbit and let $\Lambda\in \cO_0\cap \hh_0$.
 The map
 \[\varphi_0:\cU_0\to (\Lambda+\ll_0)\times (\Lambda+\ll_0+\ll_\Im)\]  has the following properties.
\begin{enumerate}[(i)]
 \item It is a diffeomorphism.
 \item If $\cO$ is the adjoint $\G$ orbit through $\Lambda$ and $\varphi$ provides the $(Y,Z)$ variables around $\Lambda\in \cO$ from \eqref{eq:local-coordinates-LU}, then, in the notation of Lemma \ref{lem:relative-LU-maps}, 
 \[\mathrm{pr}_1\circ\varphi|_{\cU_0}=\mathrm{pr}_1\circ \varphi_0,\quad
\varphi|_{\cU_{0,e}}=\varphi_0|_{\cU_{0,e}}.\]
 \item It takes the Toda vector field on $\cU_0$ to the linear vector field
 \begin{equation*}
(Y',Z',Z_\Im')=\left([Y,\pi_{\tt_0}\Lambda],[Z,-\pi_{\aa_0}\Lambda],0\right),\quad \mathrm{I}|_{\hh_0}=\pi_{\tt_0}+\pi_{\aa_0}.
\end{equation*}
\end{enumerate}
\end{theorem}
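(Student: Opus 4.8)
The plan is to prove the three assertions in order, reducing each to a result already established for the complex group $\G$ in Section \ref{sec:cx-ss} together with the factorization data supplied by Lemma \ref{lem:relative-LU-maps} and Lemma \ref{lem:Iwasawa-compatible}. For (i), I would argue exactly as in the proof of Corollary \ref{cor:local-coordinates-LU}: given $(Y,Z+Z_\Im)\in(\Lambda+\ll_0)\times(\Lambda+\ll_0+\ll_\Im)$, first recover $\zeta\in\L_\Im$ from $Z_\Im$ via the diffeomorphism $\Phi_\cC:\cC_{\M_0}\to\Lambda+\ll_\Im$ (hence recover $m=\zeta\mu\in\cC_{\M_0}$, where $\mu$ is the $\U$-factor, which is determined since $\cC_{\M_0}\to\M\cap\U$ is also part of the factorization picture), then recover $l_1$ from $Y$ and $l_2$ from $Z$ using the identification $\Lambda+\ll_0\cong\L_0$, and finally factor $l_1^{-1}l_2$ in $\cC_0\mathcal{D}_0$ to produce the unique preimage $X\in\cU_0$. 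Real analyticity of all structure maps (Iwasawa, $\L\U$, conjugation) gives that $\varphi_0$ is a real-analytic diffeomorphism, as in Corollary \ref{cor:local-coordinates-LU}.

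For (ii), the point is that when $m=e$ the extra cell $\cC_{\M_0}$ contributes nothing: $Z_\Im=0$ and the definitions of $\Phi_{\cC_0},\Phi_{\mathcal{D}_0}$ reduce to the restrictions of $\Phi_\cC,\Phi_\mathcal{D}$ of \eqref{eq:LU-maps} to the relevant subgroups sitting inside $\G$. Concretely, on $\cU_{0,e}$ one has $X=\Lambda^{(l_1k)^{-1}}=\Lambda^{ul_2^{-1}}$ with $k\in\cC_0\subset\cC$, $u\in\mathcal{D}_0\subset\mathcal{D}$, $l_1,l_2\in\L_0\subset\L$, so the complex $(Y,Z)$ variables of \eqref{eq:local-coordinates-LU} read off the same $l_1,l_2$ — hence $\varphi|_{\cU_{0,e}}=\varphi_0|_{\cU_{0,e}}$. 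For the first-coordinate equality on all of $\cU_0$, note that $\mathrm{pr}_1\circ\varphi_0$ only sees $l_1$, and from $X=\Lambda^{(l_1km)^{-1}}$ with $km\in\cC$ (using $\cC_{\M_0}\subset\cC$ and the factorization properties of the Chevalley big cell, so $\cC_0\cC_{\M_0}$ lands inside the image of $\cC\L$-type data appropriately) one gets that $\varphi$ also records this same $l_1$ as its $Y$-component. I would spell out the $\cC$-side computation $km = (\text{element of }\cC)\cdot(\text{element of }\L\H)$ carefully — this is the one place a small amount of bookkeeping with the Iwasawa/$\L\U$ compatibility of $\cC_{\M_0}$ inside $\G$ is needed.

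For (iii), I would imitate the proof of Theorem \ref{pro:Toda-solution-complex} and Theorem \ref{thm:linear-coordinates}, now inside $\G_0$: by Lemma \ref{lem:compatible-Toda} the Toda vector field on $\cU_0$ is the restriction of the one on $\gg$, and Symes' factorization holds in $\G_0$. Running the two factorization computations of Theorem \ref{pro:Toda-solution-complex} with $\H$ replaced by the Cartan subgroup of $\G_0$ (which still normalizes $\L_0$ and $\NN_0$) yields that the solution through $X$ is $\bigl((\Lambda+Y)^{\exp(-t\pi_{\tt_0}\Lambda)}\bigr)^{\kappa(\cdot)^{-1}}=\bigl((\Lambda+Z)^{\exp(t\pi_{\aa_0}\Lambda)}\bigr)^{\nu(\cdot)}$, so $\mathrm{pr}_1\circ\varphi_0$ and the $Z$-part of $\mathrm{pr}_2\circ\varphi_0$ evolve by $Y'=[Y,\pi_{\tt_0}\Lambda]$ and $Z'=[Z,-\pi_{\aa_0}\Lambda]$. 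Finally, the $Z_\Im$ coordinate: the factor $m\in\cC_{\M_0}$ is constant along the flow because $\M_0$ acts by automorphisms of the Toda vector field (Lemma \ref{lem:automorphisms-real}), so conjugating the whole trajectory by $m$ shows $m$ — hence $\zeta$, hence $Z_\Im$ — is time-independent, giving $Z_\Im'=0$.

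The main obstacle I anticipate is item (ii), and more specifically making precise that the enlarged cell $\cC_{\M_0}$ is genuinely compatible with the complex $(Y,Z)$ variables: one must track how $m=\zeta\mu$ with $\mu\in\M\cap\U$ interacts with the $\cC\L\H$ factorization of $\G$ so that the $\G_0$-ruling by $\cC_{\M_0}\cC_0$ translates sits correctly inside the $\G$-ruling by $\cC$ translates. Everything else is a routine transcription of the complex arguments with $\G\rightsquigarrow\G_0$, $\H\rightsquigarrow$ Cartan subgroup of $\G_0$, $\aa\rightsquigarrow\aa_0$, using the compatibility properties (a)--(e) and Lemmas \ref{lem:compatible-Toda}, \ref{lem:automorphisms-real}, \ref{lem:relative-LU-maps}, \ref{lem:Iwasawa-compatible}.
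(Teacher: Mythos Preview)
Your proposal is essentially correct and very close to the paper's proof; the only substantive difference is in item (iii), and there is a small order-of-operations issue worth flagging.

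For (iii) the paper does \emph{not} rerun Symes' factorization inside $\G_0$. Instead, after using $\M_0$-equivariance (Lemma \ref{lem:automorphisms-real}) to show the flow preserves each slice $\cU_{0,m}$ and hence $Z_\Im'=0$, it works on $\cU_{0,e}$ and there simply invokes item (ii) together with the already-established complex linearization (Theorem \ref{thm:linear-coordinates}): since $\varphi|_{\cU_{0,e}}=\varphi_0|_{\cU_{0,e}}$ and $\Lambda\in\hh_0$ forces $\pi_{i\aa}\Lambda=\pi_{\tt_0}\Lambda\in\tt_0$, $\pi_\aa\Lambda=\pi_{\aa_0}\Lambda\in\aa_0$, the complex linear field restricts to $(\Lambda+\ll_0)\times(\Lambda+\ll_0)$ because $\tt_0,\aa_0$ normalize $\ll_0$. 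This is more economical than redoing the factorization argument. Your alternative is valid, but as you have written it the Symes computation is asserted for arbitrary $X\in\cU_0$, and that does not go through directly when $m\neq e$: the element $m\in\cC_{\M_0}$ centralizes $\aa_0$ but not $\tt_0$, hence not $\Lambda$, so the manipulations of Theorem \ref{pro:Toda-solution-complex} do not immediately deliver an expression of the form $\Phi_{\cC_0}(m,k(t),l(t))$ with the same $m$. You must first invoke $\M_0$-equivariance to reduce to $\cU_{0,e}$ (equivalently, conjugate the trajectory by $m$, which you mention only at the end), and then run Symes with $m=e$; after that the transport back by $m$ gives the result on all of $\cU_0$.

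For (ii), the bookkeeping you flag as the main obstacle is resolved directly by Lemma \ref{lem:Iwasawa-compatible}(i): since $\cC_0\cC_{\M_0}\subset\cC$, the expression $X=\Lambda^{(l_1\cdot km)^{-1}}$ with $km\in\cC$ and $l_1\in\L_0\subset\L$ is already in the form read by $\Phi_\cC^{-1}$, whence $\mathrm{pr}_1\circ\varphi(X)=\Lambda^{l_1^{-1}}=\mathrm{pr}_1\circ\varphi_0(X)$ with no further unpacking needed.
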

\begin{proof}
  To exhibit  $\varphi_0^{-1}$ we first apply $\Phi_\cC^{-1}$ to $\Lambda+Z_\Im\in \Lambda+\ll_\Im$ to write
  \[
   \Lambda+Z_\Im=\Lambda^{\zeta^{-1}}, \quad \zeta=m\mu^{-1}.
  \]
  The remaining steps in the construction of the inverse are as in Corollary \ref{cor:local-coordinates-LU}.

  By Lemma \ref{lem:Iwasawa-compatible}
 the compatibility of Iwasawa factorizations  yields the inclusions
  \[\cC_0\cC_{\M_0}\subset \cC,\quad \mathcal{D}_0\subset \mathcal{D}.\]
  The first one implies  the first compatibility condition
  between $\varphi$ and $\varphi_0$ in item (ii); combined with the second inclusion,
  it provides the second compatibility condition.

  To prove item (iii), we  claim that the Toda vector field on $\cO_0$ is tangent to $\cU_{0,e}$.
  Assuming this claim, because by Lemma \ref{lem:automorphisms-real} the adjoint action of $\M_0$ is by  automorphism of the Toda vector field,
 the Toda vector field is tangent to the submanifolds $\cU_{0,m}$, for all $m\in \M_0$.
 By the construction of induced $(Y,Z)$ variables,
 \[\varphi_0(\cU_{0,m})=(\Lambda+\ll_0)\times (\Lambda+\ll_0)+Z_{\Im}(m),\]
 which immediately implies that $Z'_{\Im}=0$.
 Since for any $X\in \cU_{0,e}$ its $Y$ and $Z$ coordinates agree with those of $X^{m^{-1}}$,
 the evolution of $Y,Z$ does not depend on the values of $Z_{\Im}$, and this proves the formula for the Toda vector field in (iii). 

 To prove the claim we argue as follows.
\begin{itemize}
 \item Because the Iwasawa factorizations of $\G_0$ and $\G$ are compatible, by Lemma \ref{lem:compatible-Toda} the Toda
 vector field on $\cO_0$ agrees with the restriction of the Toda vector field on $\cO$, so we may assume that we work with the latter.
\item By item (ii),
 $\varphi_0$ and $\varphi$ agree on $\cU_{0,e}$. Thus, we can  use  $\varphi$.
 \item By Theorem \ref{thm:linear-coordinates}, $\varphi$ takes the Toda vector field on $\cU$   to the  vector
field
\[(Y',Z')=([Y,\pi_{i\aa}\Lambda],[Z,-\pi_{\aa}\Lambda])\]
on $(\Lambda+\ll)\times (\Lambda+\ll)$. 
As $\Lambda\in \hh_0$, we have $\pi_{i\aa}\Lambda\in \tt_0$,
$\pi_{\aa}\Lambda\in \aa_0$. Since both $\tt_0$ and $\aa_0$ normalize $\ll_0$ it follows that this vector field is tangent to $(\Lambda+\ll_0)\times (\Lambda+\ll_0)$,
  and this finishes the proof of the claim.
\end{itemize}

Note that the  vector field in (iii) is diagonal in the coordinate set $Z$ (and in $Z_\Im$, of course). The reason is that each restricted root space is a real form of the sum of roots spaces of $(\gg,\aa)$ for the roots that agree on $\aa_0$. Since $\pi_{\aa_0}\Lambda\in \aa_0$, the restriction of $Z'=[Z,-\pi_{\aa_0}\Lambda]$ to the each restricted root space is a multiple of the Euler vector field. 

However, it is not clear whether  $Y'=[Y,\pi_{\tt_0}\Lambda]$ may be diagonal in general.
\end{proof}

\begin{proof}[Proof of Theorem \ref{thm:relative}]
First, we show that the Toda vector field on $\gg$ is tangent to $\cO_0$. By Lemma \ref{lem:compatible-Toda}  it is tangent to $\gg_0$. Since it is also tangent
to the adjoint orbit $\cO$ through $\Lambda$, it is tangent to  $\cO\cap \gg_0$, and, in particular, to $\cO_0$, which is its connected
component  through $\Lambda$.

The existence of the induced $(Y,Z)$ variables (the local coordinates alluded to in item (ii) of  Theorem \ref{thm:relative}) is the content of Theorem \ref{thm:real-form-local-coordinates-LU}. The density of their domain is proved in item (i) in Lemma \ref{lem:relative-LU-maps}.
Also, the inclusion $\cC_{\M_0}\cC_0\subset \cC$ implies that $\cU_0\subset \cU$. Since  $\cU\cap \hh=\{\Lambda\}$ we deduce that $\Lambda$ is the only element in $\cU_0\cap \hh_0$.
So it
only remains to prove that the union of these charts centered at
points of $\cO_0\cap \hh_0$ cover $\cO_0$. To do that we combine the Bruhat covers of $\G_0$ and $\M_0$,
\begin{align*}
\G_0 &=\bigcup_{w\in \W_0}\M_0\NN_0\A_0\L_0 w=\bigcup_{w\in \W_0}
\left(\bigcup_{\sigma\in  \W_{\M_0}}\cC_{\M_0} \Z_{\M_0}(\tt_0)\sigma\right)\NN_0\A_0\L_0 w=\\
&=\bigcup_{w\in \W_0}\bigcup_{\sigma\in  \W_{\M_0}}\cC_{\M_0}\NN_0\L_0\A_0 \Z_{\M_0}(\tt_0)\sigma w,
\end{align*}
where in the last equality we have used that $\sigma,\Z_{\M_0}(\tt_0)\subset \M_0$ normalize $\NN_0\A_0\L_0$.
Once an element $\Lambda\in \cO_0\cap \hh_0$ is chosen, this cover of $\G_0$ produces, upon taking the adjoint action, the corresponding one of $\cO_0$.
We now verify that the subsets of the latter cover coincide with the domains of induced $(Y,Z)$ variables. We also provide yet another parametrization of the elements in $\cO_0\cap \hh_0$.

\begin{align*}
\cO_0 &=\bigcup_{w\in \W_0}\bigcup_{\sigma\in \W_{\M_0}}\Lambda^{\cC_{\M_0}\NN_0\L_0\A_0 \Z_{\M_0}(\tt_0)\sigma w}=
\bigcup_{w\in \W_0}\bigcup_{\sigma\in \W_{\M_0}}{(\Lambda^{\sigma w})}^{\cC_{\M_0}\NN_0\L_0}=\\
&=\bigcup_{w\in \W_0}\bigcup_{\sigma\in \W_{\M_0}}{(\Lambda^{\sigma w}+\ll_0)}^{\cC_{\M_0}\NN_0}
=\bigcup_{w\in \W_0}\bigcup_{\sigma\in \W_{\M_0}}{(\Lambda^{\sigma w}+\ll_0)}^{\cC_{\M_0}\cC_0}=\\
&=\bigcup_{\rho\in \NN_{\K_0}(\hh_0)/\Z_{\K_0}(\hh_0)}{(\Lambda^{\rho}+\ll_0)}^{\cC_{\M_0}\cC_0},
\end{align*}
where the last equality follows from
Lemma \ref{lem: Weyl short exact} in the Appendix.

The above equalities show that the cardinalities of $\cO_0\cap \hh_0$ and $\NN_{\K_0}(\hh_0)/\Z_{\K_0}(\hh_0)$ agree. This is consistent with \cite[Proposition 2.9]{Ro}.
\end{proof}

\begin{remark} We have presented a relative version of the  linearization result in which
 the real reductive Lie group with finite center $\G_0$ is a real form of a complex semisimple Lie group.
 Our methods can be adapted to show that, for any real semisimple Lie group with a fixed Iwasawa factorization, the ensuing
 Toda vector field linearizes on maximally non-compact regular semisimple orbits.
 Note that this is not the same result proved in \cite{MT},  which addressed the diagonalization of the Toda vector field on hyperbolic $\K_0$ orbits of maximal dimension (Lie theoretic generalizations of conjugacy classes of real symmetric matrices with simple spectrum). Recall that a semisimple element of $\gg_0$ is hyperbolic if all its eigenvalues for its adjoint action on $\gg_0$ are real. Equivalently, the element is conjugated to one in $\aa_0\subset \hh_0$. Note that $\aa_0$ contains regular elements if and only if  $\mm_0$
 is abelian (for instance,  if the real semisimple Lie algebra is split or is a complex Lie algebra). Therefore, the elements addressed in \cite{MT} are not regular in general (besides not working with their full adjoint orbit but with its intersection with $\pp_0$).
\end{remark}

\medskip

\begin{proof}[Proof of Theorem \ref{thm:slC}]
 The conjugacy classes of the non-compact real forms of $\Sl_\C$ have
the following classical representatives \cite[p. 451-452]{He}:
\[\Sl_\R,\quad  \Sl_\mathbb{H},\quad \mathrm{SU}(p,q).\]
In fact, \cite{He} works with the real form $\Su^*$ which is conjugated to $\Sl_\mathbb{H}$. The former is the result of  embeding a quaternionic $n\times n$ matrix as a complex matrix with four $n\times n$ blocks, whereas the latter embeds it as $n\times n$ matrix whose elements are $2\times 2$ complex matrices.

{\bf $\Sl_\R$:} The proof for the split real form of $\Sl_\C$ is the most straightforward one.
 The Gram-Schmidt factorization
of $\Sl_\C$ is clearly compatible with $\Sl_\R$. The compatibility can also be drawn from Remark \ref{rem:split}, and the fact that the involution $\tau(X)=\overline{X}$ that determines $\sl_\R$ commutes with the Cartan
involution $\theta(X)=-X^*$ and preserves the diagonal matrices of $\sl_\C$.

The $\Sl_\R$ conjugacy class of a diagonal real matrix with simple spectrum  equals the intersection of its $\Sl_\C$ conjugacy class with $\sl_\R$, and both orbits contain the same diagonal matrices. Because the centralizer in $\Sl_\R$ of the diagonal matrices is discrete, i.e., its Lie algebra $\mm$ is trivial, by Theorem \ref{thm:real-form-local-coordinates-LU}
the $(Y,Z)$ variables on $\cO$ centered at any diagonal $\Lambda\in \cO$ restrict to $(Y,Z)$ variables for the Toda vector field on $\cO_0$. Their image  is $(\Lambda+\ll_0)\times (\Lambda+\ll_0)$. Such variables, without reference to complex matrices, were introduced in \cite{LMST}.

The previous considerations for $\sl_\R$ hold for a split real form $\gg_0$ of a complex semisimple Lie algebra,  defined by an involution that commutes with the fixed Cartan involution on $\gg$ and preserves the fixed Cartan subalgebra of $\gg$.

{\bf $\Sl_\mathbb{H}$:} The Lie algebra of this real form  is the fixed-point set of the involution
\[\tau(X)=-gJ\overline{X}Jg^{-1},\quad J=\begin{pmatrix} 0 & \mathrm{I}_n\\ -\mathrm{I}_n & 0\end{pmatrix},\]
where $g$ is the permutation matrix of $(                                                                                                      1, 3,  5, \cdots , 2n-1,  2, 4,  6,\cdots , 2n)$,
\[\sl_\mathbb{H}=\left\{\begin{pmatrix} q_{11} &   &q_{nn}\\ & \cdots &\\
                         q_{n1} & & q_{nn}
                        \end{pmatrix}\in \sl_\C,\quad q_{ab}=
                        \begin{pmatrix}x_{2a-1,2b-1} & x_{2a-1,2b} \\
                         -\overline{x_{2a-1,2b}} & \overline{x_{2a-1,2b-1}}
                        \end{pmatrix}
                        \right\}\]

We prove that $\Sl_\mathbb{H}$ obtains a compatible Iwasawa factorization from the Gram-Schmidt factorization, i.e., that properties (c)-(e) hold.
The involution $\tau$ commutes with the Cartan involution  and preserves  diagonal matrices in $\sl_\C$.
The diagonal matrices in $\sl_\mathbb{H}$ and its vector part are
\[
 \hh_0=\left\{\begin{pmatrix} \lambda_1 & 0 & &  &  \\ 0 &
 \overline {\lambda}_1 &  & & \\
 & & \cdots &  & \\
  & & & \lambda_n & 0 \\
  & &  & 0 & \overline{\lambda}_n
  \end{pmatrix}\right\},\, \quad \aa_0=\left\{\begin{pmatrix} a_1 & 0 & &  &  \\ 0 &
 a_1 &  & & \\
 & & \cdots &  & \\
  & & & a_n & 0 \\
  & &  & 0 & a_n
  \end{pmatrix}\right\},
\]
for $\lambda_i\in \C,\,a_i\in \R$. The Cartan subalgebra $\hh_0$ is maximally non-compact because  $\sl_\mathbb{H}$ has just one conjugacy class of Cartan subalgebras \cite[p. 398-399]{Su}. Finally, if two roots of $(\sl_\C,\hh)$ agree on $\aa_0$, then they  must be of the form
\[\alpha=\pm (e_i-e_j),\,\beta=\pm (e_{i+\varepsilon_i}-e_{j+\varepsilon_j}),\quad \varepsilon_k=\begin{cases} 1,& k\,\,\mathrm{odd}\\ -1, & k\,\,\mathrm{even}\end{cases}.\]
Since $\alpha>0$ if and only if $\beta>0$, the root ordering of $(\sl_\C,\aa)$ is compatible with $\sl_\mathbb{H}$. The sum of the negative (restricted) root spaces is
\[\ll_0=\left\{\begin{pmatrix} 0 & 0 & &   0 \\ q_{21} &
 0 &  & 0 \\
 &  & \cdots   &  \\
q_{n1 }& q_{n2} &   & 0
  \end{pmatrix}\in \sl_\C\right\}.\]

If $\Lambda\in \hh_0$ has simple spectrum, then its $\Sl_\mathbb{H}$ conjugacy class $\cO_0$ is the intersection of its $\Sl_\C$ conjugacy class $\cO$ with $\sl_\mathbb{H}$ (i.e. the latter intersection is connected \cite[Corollary 2.12]{Ro}). The diagonal matrices in $\cO_0$ are parametrized by those permutations of $2n$ elements such that $\sigma(j+1)=\sigma(j)+1$, if $j$ and $\sigma(j)$ are odd, and $\sigma(j+1)=\sigma(j)-1$, if $j$ is odd and $\sigma(j)$ is even. According to Lemma \ref{lem: Weyl short exact}, they can be arranged as an extension of permutations of $\Re \Lambda$,  the permutation group in $n$ elements, by the permutations fixing $\Re \Lambda$,  the group  $\mathbb{Z}_2^n$. The latter is the Weyl group  of the centralizer of $\Re\Lambda$ in $\Sl_\mathbb{H}\cap \Su_\C$. This centralizer equals $n$ diagonal copies of $\Su(2)$ (unit quaternions), and its  Lie algebra is
\[\mm_0=\left\{\begin{pmatrix} z_{11} & -\overline{z}_{21} & &  &  \\
z_{21}  & -z_{11} & & \\
 & & \cdots &  & \\
  & & & z_{2n-1,2n-1} & -\overline{z_{2n,2n-1}} \\
  & &  &z_{2n,2n-1}  & -z_{2n-1,2n-1}
  \end{pmatrix}\in \su\right\}.\]
The negative imaginary root spaces are the subdiagonal positions $z_{2a,2a-1}$.
By Theorem \ref{thm:real-form-local-coordinates-LU}
the restriction to $\cO_0$ of the $(Y,Z)$ variables on $\cO$ centered at any diagonal $\Lambda\in \cO_0$, after a suitable modification, yield induced $(Y,Z)$ variables for the Toda vector field on $\cO_0$. Explicitly, the  vector field that we obtain is
\[\begin{split}  Y'_{2a-1,2b-1} &= \Im\left(\lambda_{b}-\lambda_{a}\right)Y_{{2a-1,2b-1}},\quad Y'_{2a-1,2b}= \Im\left(\overline{\lambda_{b}}-\lambda_a\right)Y_{{2a-1,2b}},\quad a<b,\\
 Z'_{2a-1,2b-1} & = \Re\left(\lambda_{a}-\lambda_{b}\right)Z_{{2a-1,2b-1}},\quad Z'_{2a-1,2b}= \Re\left(\lambda_{a}-\overline{\lambda_{b}}\right)Z_{{2a-1,2b}},\quad a<b,\end{split}\]
 \begin{equation*}
 Z'_{2a,2a-1}=0.
 \end{equation*}
It evolves separately on 1-dimensional complex vector spaces (so it is diagonal). We do not know the reason why the evolution in the variable $Y$ is diagonal (for instance, whether this may be the case for  real forms with a unique conjugacy class of Cartan subalgebras).

{\bf $\mathrm{SU}(p,q)$ ($p\geq q$):} Its Lie algebra is the fixed-point set of 
\[\tau(X)=\mathrm{I}_{p,q}X^*\mathrm{I}_{p,q},
\quad \mathrm{I}_{p,q}=\begin{pmatrix}
 \mathrm{I}_p & 0 \\ 0 & -\mathrm{I}_q
\end{pmatrix}.\]
\[\mathfrak{su}(p,q)=\left\{\begin{pmatrix}
 X_p & V \\ V^* & X_q
\end{pmatrix}\,|\, X_p\in \mathfrak{u}(p),\,X_q\in \mathfrak{u}(q),\,\mathrm{tr}(X_p+X_q)=0\right\}.
\]
This involution commutes with $\theta$ but it does not preserve the diagonal complex matrices. A maximally non-compact Cartan subalgebra of $\mathfrak{su}(p,q)$ is
\[\mathfrak{d}_0=\left\{\begin{pmatrix} C_{p-q} & 0 & 0\\
    0 & B_{q} & A_q^{\mathrm{op}} \\
    0 & {A_q^\mathrm{op}}^T & B^\mathrm{op}_q
  \end{pmatrix}\,|\,
  \quad A^\mathrm{op}_q=\mathrm{I}^\mathrm{op}A_q\mathrm{I}^\mathrm{op}\right\}, \qquad \mathrm{I}^\mathrm{op}=\begin{pmatrix}
  	0 &\cdots & 1\\
  	& \cdots &\\
  	1 &\cdots & 0
  \end{pmatrix}
\]
where $C_{p-q},B_q$ range over purely imaginary diagonal matrizes and $A_q$ ranges over
real diagonal matrices (c.f. \cite[p. 371-372]{Kn}). Its conjugation by the matrix $c$ in the statement of Theorem \ref{thm:slC} 
takes $\mathfrak{d}_0$ to a real form $\hh_0$ of the diagonal matrices
 \[\hh_0=\left\{\begin{pmatrix} \Lambda_{q} & 0 & 0   \\ 0 & \Im_{p-q} & 0\\
 0 & 0 & -\overline{\Lambda}_q^\mathrm{op}
  \end{pmatrix}\right\},\]
where  $\Lambda_q$ and $\Im_{p-q}$ are square matrices
of complex and purely
  imaginary complex numbers, respectively.
In fact, the conjugation by the second factor
in $c$ already does this. However, we need the permutation matrix so that the ordering of the eigenvalues guarantees that the ordering of $(\sl_\C,\aa)$ induces an ordering of $(\gg_0=c\su(p,q)c^{-1},\hh_0)$.

If  $\Lambda\in \hh_0$ has simple spectrum, then the intersection of its $\G_0$ conjugacy class with $\hh_0$ is parametrized by those permutations of $p+q$ elements that preserve $\hh_0$.  They are an extension of permutations of $\Re \Lambda$,  itself  an  extension by $\mathbb{Z}_2$ of  the group permutations of $q$ elements, by  permutations fixing $\Re \Lambda$,  the group of permutations of  $p-q$ elements.
    Theorem \ref{thm:real-form-local-coordinates-LU}
gives induced $(Y,Z)$ variables for the Toda vector field on the $\G_0$ conjugacy class around  $\Lambda$, by restriction of the coordinates on  the $\Sl_\C$ conjugacy class and adjustment of the variable $Z$.
\end{proof}

\begin{proof}[Proof of Corollary \ref{cor:simple}]
We regard $\sl_\R,\sl_\mathbb{H},\sp_{\C},\so_\C$ as subalgebras of $\sl_\C$, where we abuse notation by taking $\sp_\C$ to be the isotropy Lie algebra of
\[dz_1\wedge dz_{2n}+dz_2\wedge dz_{n-1}\cdots +dz_n\wedge dz_{n+1}.\]
(The subalgebra denoted by $\mathfrak{q}$ in Example \ref{ex:so}).
For the first three subalgebras we consider the classical Toda vector field on $\sl_\C$; for $\so_\C$ we have to conjugate it so that it becomes tangent to the subalgebra (the conjugation is dictated by the choice of Iwasawa decomposition of $\so_\C$ described in Example \ref{ex:so}).

In all cases  the relevant conjugacy classes in the subalgebras are the (connected) intersection of the subalgebra with a conjugacy class of matrices in $\sl_\C$ with simple spectrum subject to additional contraints: For $\sl_\R$ the spectrum has to be real, for $\sl_\mathbb{H}$ it has to be invariant by conjugation and not include real numbers, and for $\sp_\C,\so_\C$ it has to be invariant by muliplication by -1. For $\sl_\mathbb{H}$ and $\sp_\C$ there is an additional ordering constraint for a diagonal matrix with the given spectrum to be in the subalgebra. For $\so_\C$ the matrices around which charts are constructed are not diagonal; they have diagonal $2\times 2$ blocks of the form
\[\begin{pmatrix} 0 & i\lambda \\ -i\lambda & 0 \end{pmatrix}\]
(And an additional 0 in the diagonal for the odd dimensional case).

In all cases the corresponding  vector fields are diagonal. This is a consequence of Theorem \ref{thm:main} for $\sl_\C,\sp_\C$ and $\so_\C$, and of the proof of Theorem \ref{thm:slC} for $\sl_\R$ (for split real forms) and for $\sl_\mathbb{H}$ (explicit computation).
\end{proof}

\section{Appendix: Compatible Iwasawa factorizations}\label{sec:appendix}
We  discuss the existence of compatible Iwasawa factorizations, and the relation between the different Chevalley big cells and Weyl groups that arise in the presence of such Iwasawa factorizations. These results enter in the proof of Theorem \ref{thm:relative}.

\medskip

Let $\G$ and $\G_0$ be a complex semisimple Lie group and a real form endowed with  compatible Iwasawa
factorizations. As in Sections 3 and 4, this  means that we have an Iwasawa factorization of $\G=\K\A\NN$ provided
by conditions (a) and (b) described at the beginning of Section \ref{sec:factorizations}: An anti-holomorphic Cartan involution
\[\theta:\G\to \G,\quad \gg=\kk\oplus \pp\]
and a root ordering of the roots of $(\gg,\aa)$, where $\aa$ is a maximal abelian subalgebra of $\pp$.
Furthermore, the anti-holomorphic group involution $\tau$ whose fixed-point set is $\G_0$ satisfied conditions (c)-(e)
described at the beginning of Section \ref{sec:real forms}: $\tau$ and $\theta$ commute, the Cartan subalgebra
$\hh=i\aa\oplus \aa$ is invariant by $\tau$, $\hh_0=\hh^\tau$ is a maximally non-compact Cartan subalgebra of $\gg_0$,
and the root ordering of $(\gg,\aa)$ induces a root ordering of the restricted roots of $(\gg_0,\aa_0)$,
where $\aa_0=\aa^\tau$. As a result $\theta$ acts on $\G_0$ and gives a Cartan decomposition
\[\gg_0=\kk_0\oplus \pp_0,\quad \kk_0=\kk\cap \gg_0,\,\pp_0=\pp\cap \gg_0,\]
and the choice of $\aa_0$ and the induced root ordering defines an Iwasawa factorization for $\G_0=\K_0\A_0\NN_0$.
Since
\begin{equation*}\label{eq:Iwasawa-compatible}
\K_0\subset\K,\quad \A_0\subset\A,\quad \NN_0\subset \NN,
\end{equation*}
 for any $g\in \G_0$ its Iwasawa factorization in $\G_0$ and $G$ give the same factors.

\medskip

The following lemma obtains real forms with compatible Iwasawa factorizations.

 \begin{lemma}\label{lem:compatible-real-forms}
Let $\G$ be a complex semisimple Lie group with a fixed Iwasawa factorization. Given any real form of $\G_0$,
there exists a real form $\G_0'$ conjugated to $\G_0$  such that $\G$ and $\G_0'$ have compatible
Iwasawa factorizations.
\end{lemma}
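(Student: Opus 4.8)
The plan is to assemble the lemma from three classical facts about real forms, keeping careful track of which inner automorphism is applied at each stage. I argue throughout at the Lie algebra level: write $\tau$ for the anti-holomorphic conjugation of $\gg$ with fixed-point algebra $\gg_0$, so that conjugating $\G_0$ by $g\in\G$ replaces $\tau$ by $\Ad(g)\circ\tau\circ\Ad(g)^{-1}$, and recall that the fixed Cartan involution $\theta$ is the conjugation with $\theta|_{\kk}=\id$, $\theta|_{\pp}=-\id$; denote by $\Sigma$ the fixed root ordering (equivalently, positive system) of $(\gg,\aa)$ from (b). First I would invoke the classical fact \cite[Ch.~VI]{Kn} that every conjugation of $\gg$ is $\mathrm{Int}(\gg)$-conjugate to one commuting with $\theta$; after this first conjugation I may assume $\tau\theta=\theta\tau$, so $\theta$ restricts to a Cartan involution of $\gg_0$ and $\gg_0=\kk_0\oplus\pp_0$ with $\kk_0=\kk\cap\gg_0$, $\pp_0=\pp\cap\gg_0$. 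This is condition (c), and it will survive any later conjugation by a $\theta$-fixed element.

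Next I would produce the Cartan subalgebra. Choose a maximal abelian $\aa_0\subset\pp_0$ and complete it, in the usual way, to a $\theta$-stable maximally non-compact Cartan subalgebra $\hh_0=\tt_0\oplus\aa_0$ of $\gg_0$ with $\tt_0\subset\kk_0$. Its complexification $\hh'=\hh_0^{\CC}$ is a $\theta$-stable Cartan subalgebra of $\gg$, and since $\tt_0\subset\kk$ and $\aa_0\subset\pp$ one reads off $\hh'\cap\kk=\tt_0\oplus i\aa_0$ and $\aa':=\hh'\cap\pp=i\tt_0\oplus\aa_0$, whence $\hh'=i\aa'\oplus\aa'$. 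Because $\zz_{\gg}(\aa')=\zz_{\gg}(\aa'\oplus i\aa')=\zz_{\gg}(\hh')=\hh'$, the subalgebra $\aa'\subset\pp$ is maximal abelian, and $\aa'\cap\gg_0=\hh_0\cap\pp=\aa_0$. Thus with respect to the Iwasawa ingredient $(\theta,\aa')$ the real form $\gg_0$ already satisfies the analogues of (c) and (d); what is missing is the right ordering on $(\gg,\aa')$ and the identification of $\aa'$ with the prescribed $\aa$.

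For the ordering I would pick $\bar H_0\in\aa_0$ regular for the restricted roots of $(\gg_0,\aa_0)$ and $H_1\in\aa'$ not annihilated by any root of $(\gg,\aa')$ that vanishes on $\aa_0$; for $H_1$ small, $H:=\bar H_0+H_1$ is regular and $\sign\alpha(H)=\sign\alpha(\bar H_0)$ for every root $\alpha$ not vanishing on $\aa_0$, so the positive system $\Sigma':=\{\alpha\mid\alpha(H)>0\}$ has the feature that two roots with equal nonzero restriction to $\aa_0$ are simultaneously positive, i.e.\ $\Sigma'$ satisfies (e) relative to $\aa_0$. Now $(\theta,\aa',\Sigma')$ is a full Iwasawa datum for $\G$. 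Since maximal abelian subalgebras of $\pp$ are all $\Ad(\K)$-conjugate and the Weyl group $\mathrm{N}_{\K}(\aa)/\Z_{\K}(\aa)$ permutes the Weyl chambers of $\aa$ simply transitively, there is $k\in\K$ — concretely $k=k_2k_1$ with $\Ad(k_1)\aa'=\aa$ and $k_2\in\mathrm{N}_{\K}(\aa)$ aligning the chambers — such that $\Ad(k)\aa'=\aa$, $\Ad(k)\Sigma'=\Sigma$, hence $\Ad(k)\nn'=\nn$. Then $\G_0':=\Ad(k)\G_0$ works: (c) survives because $k$ is $\theta$-fixed; $\Ad(k)$ carries $\hh'$ to $\hh=i\aa\oplus\aa$ and $\hh_0$ to the maximally non-compact Cartan subalgebra $\Ad(k)\hh_0$ of $\gg_0'$ with $\aa\cap\gg_0'=\Ad(k)\aa_0$ maximal abelian in $\pp\cap\gg_0'$, which is (d); and $\Ad(k)$ matches roots, their restrictions to $\aa_0$ versus $\Ad(k)\aa_0$, and the two positive systems, so (e) passes from $(\Sigma',\aa_0)$ to $(\Sigma,\Ad(k)\aa_0)$. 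As $\G_0'$ is conjugate to the given $\G_0$, the lemma follows.

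The two substantive inputs — the first conjugation \cite[Ch.~VI]{Kn} and the existence of a $\theta$-stable maximally non-compact Cartan subalgebra of $\gg_0$ with the stated relation to $\aa'$ — are classical, so the real obstacle is organizational: one must exhibit a single $k\in\K$ that simultaneously respects $\theta$, moves $\aa'$ onto the prescribed $\aa$, and aligns the compatible ordering $\Sigma'$ with the prescribed $\Sigma$. That is precisely where the simple transitivity of the Weyl group on Weyl chambers is used, and it is the main point requiring care.
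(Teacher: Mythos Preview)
Your proof is correct and follows essentially the same strategy as the paper: build an auxiliary Iwasawa datum $(\theta,\aa',\Sigma')$ on $\G$ that is already compatible with (a conjugate of) $\gg_0$, then use that any two Iwasawa data on $\G$ are conjugate to align it with the given $(\theta,\aa,\Sigma)$. The only organizational difference is that the paper performs a single conjugation by $g\in\G$ at the end, whereas you split it into a preliminary conjugation making $\tau$ commute with the fixed $\theta$ and a final conjugation by $k\in\K$; your explicit construction of the ordering via a regular element $H=\bar H_0+H_1$ is exactly the mechanism behind the reference to \cite[Chapter~VI, Section~6]{Kn} that the paper invokes.
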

\begin{proof} Fix an arbitrary Iwasawa factorization $\G_0=\K_0\A_0\NN_0$.
We first promote it to an Iwasawa factorization of $\G$.
The Cartan decomposition $\theta:\G_0\to \G_0$ has a  anti-holomorphic extension to $\gg$, and there
it defines a Cartan decomposition of $\G$,
\[\gg=\kk\oplus \pp.\]
To obtain from $\aa_0$ a maximal abelian subalgebra of $\pp$,
define the centralizer of $\aa_0$ in $\kk_0$
\[\mm_0=\zz_{\kk_0}(\aa_0)\]
and take a maximal abelian subalgebra  $\tt_0$ of $\mm_0$. It follows that $\tt_0\oplus \aa_0$ is a Cartan subalgebra
of $\gg_0$. It also follows that $i\tt_0\oplus \aa_0$ is a maximal abelian subalgebra $\aa$ of $\pp$. According to
\cite[Chapter VI, Section 6]{Kn}
there exists a root ordering of $(\gg,\aa)$ such that for a root $\alpha$ with $\alpha|_{\aa_0}\neq 0$
one has that $\alpha$ is positive if and only if the restricted root $\alpha|_{\aa_0}$ is positive.
The outcome is an Iwasawa factorization $\G=\K\A\NN$ that
is compatible with the fixed Iwasawa factorization $\G_0=\K_0\A_0\NN_0$.

Finally, there exist an element $g\in \G$ such that the conjugation of $\G=\K\A\NN$ by $g$ produces the given
Iwasawa factorization of $\G$ \cite[Chapter VI, Section 6]{Kn}. Therefore $\G$ and its real form $g\G_0g^{-1}$
have compatible Iwasawa factorizations.
\end{proof}

\subsection{Chevalley big cells}
We describe  the relationship among  various Chevalley big cells associated 
with  $\G=\K\A\NN$ and $\G_0=\K_0\A_0\NN_0$,  a complex semisimple Lie group and a real form  with compatible Iwasawa factorizations. This is used to define the induced $(Y,Z)$ variables on $\cO_0$ in Theorem \ref{thm:relative}.

\medskip

The Chevalley big cell  of $\G_0$ sits inside the Chevalley big cell of $\G$,
\[\cC_0\subset \cC.\]
In general, $\cC_0$ is not a dense open subset of $\K_0\cap \cC$. We  describe a canonical enlargement of $\cC_0$ to an open dense subset of $\K_0\cap \cC$. This enlargement makes use of the Chevalley big cell of the complexification of the (possibly non connected) subgroup
 $\M_0=Z_{\K_0}(\aa_0)$, which integrates the subalgebra $\mm_0$ whose elements are the eigenvectors in $\kk_0$
associated with the eigenvalue 0 for the adjoint action. 

The complexification  of $\mm_0$ is
\[\mm=\ll_{\Im}\oplus \nn_{\Im}\oplus \tt_0\oplus i\tt_0,\]
where $\nn_\Im$ and $\ll_\Im$ denote the sum of positive and negative imaginary root spaces of $(\gg,\aa)$, respectively.
The compact group $\M_0$ admits an abstract complexification $\M$. From the properties of the complexification functor,
 the inclusion $\M_0\subset \K$ induces a holomorphic injective map from $\M$ into $\G$. Its
image is a closed subgroup that we identify with $\M$ and that integrates $\mm\subset \gg$. Moreover,  the Cartan decomposition on $\G$ induces a Cartan decomposition on $\M$, which at the Lie algebra level is
\[\mm_0=\kk\cap \mm,\quad i\mm_0=\pp\cap \mm.\]
Since $\tt_0$ is a Cartan subalgebra of $\mm_0$, $i\tt_0$ is a maximal abelian subalgebra of $i\mm_0$.
The roots of $(\mm,i\tt_0)$ are nothing but the imaginary roots of $(\gg,\aa)$, and they come with an ordering from
that of $(\gg,\aa)$. Therefore we have an Iwasawa factorization for $\M=\M_0\exp(i\tt_0)\NN_{\Im}$, where  $\NN_\Im\subset \G$ is the connected integration of $\nn_{\Im}$, and its corresponding Chevalley big cell 
\[ \cC_{\M_0}=\kappa_\M(\L_{\Im}),\quad\kappa_{\M}:\M\to \M_0,\]
where $\L_\Im\subset \G$ is the connected integrations of $\nn_\Im,\ll_\Im$. We denote by $\TT_0=Z_{\M_0}(i\tt_0)$.

\begin{lemma}\label{lem:Iwasawa-compatible} Let $\G=\K\A\NN$ and
 $\G_0=\K_0\A_0\NN_0$ be compatible Iwasawa factorizations, and let $\M=\M_0\exp(i\tt_0)\NN_{\Im}$ be the
 induced Iwasawa factorization of $\M$. Then
 \begin{enumerate}[(i)]
  \item  the product map  $\cC_0\times \cC_{\M_0}\to \cC_0\cC_{\M_0}$ is a diffeomorphism onto its image, a subset of $\K_0 \cap \cC$. More precisely,
\begin{equation*}
\cC_0\cC_{\M_0}=\kappa(\L_0\L_{\Im})\subset \cC,\quad \kappa:\G\to \K;
\end{equation*}
\item the subset
 $\cC_{\M_0}\cC_0\TT_0\L_0\A_0\subset \G_0$
is open and dense and equals $\cC_{\M_0}\cC_0\L_0Z_{\K_0}(\hh_0)$.
 \end{enumerate}

\end{lemma}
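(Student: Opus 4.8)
The plan is to establish both items by carefully tracking how the compatible Iwasawa factorizations interact, reducing everything to the behaviour of the imaginary root spaces. For item (i), the key observation is that $\L_0$ integrates the sum of negative restricted root spaces of $(\gg_0,\aa_0)$, while $\L_{\Im}$ integrates the negative imaginary root spaces of $(\gg,\aa)$; these two sets of root spaces are disjoint, and together with $\ll_0 + \ll_{\Im}$ one gets (a real form of) the full $\ll$. First I would check that $\L_0 \L_{\Im}$ is a submanifold of $\L$ of the expected dimension by a dimension count, using that $\nn_{\Im}$ meets the restricted-root decomposition only in the root spaces over $\aa_0$ that vanish identically on $\aa_0$. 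Then, applying the first Iwasawa projection $\kappa:\G\to\K$ to $\L_0\L_{\Im}$, I would argue that $\kappa(\L_0\L_{\Im})$ is exactly $\cC_0 \cC_{\M_0}$: for $\ell_0 \ell_{\Im}\in \L_0\L_{\Im}$, factor $\ell_{\Im}$ using the Iwasawa decomposition of $\M$ as $\ell_{\Im} = \kappa_\M(\ell_{\Im})\, t\, n$ with $t\in\exp(i\tt_0)$, $n\in\NN_{\Im}$, so that $\kappa(\ell_0\ell_{\Im}) = \kappa(\ell_0\, \kappa_\M(\ell_{\Im})\, t\, n)$; since $\exp(i\tt_0)\NN_{\Im}\subset \A\NN$ and $\kappa_\M(\ell_{\Im})$ normalizes $\NN_0$ appropriately (as $\M_0$ normalizes $\uu_0$, by the argument in Lemma \ref{lem:automorphisms-real}), the $\K$-part collapses to an element of $\cC_0\cC_{\M_0}$. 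The injectivity of the product map $\cC_0\times\cC_{\M_0}\to\K$ follows because $\cC_0 \cong \L_0$ and $\cC_{\M_0}\cong \L_{\Im}$ via $\kappa$ and $\kappa_\M$, and $\L_0\L_{\Im}$ is a direct product as a manifold; that it is a diffeomorphism onto its image then follows as in Lemma \ref{lem:more-LU}, and that image lands in $\K_0\cap\cC$ since $\cC_0\cC_{\M_0}\subset\K_0$ while the identification with $\kappa(\L_0\L_{\Im})$ places it inside $\cC$.

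For item (ii), the strategy is to interpret $\cC_{\M_0}\cC_0 \TT_0 \L_0 \A_0$ as a product of big cells in $\M_0$ and in $\G_0$. The Bruhat-type decomposition of $\M_0$ (a compact, possibly disconnected reductive group) has open dense cell $\cC_{\M_0}\TT_0$ inside the identity component, and more generally $\cC_{\M_0}\TT_0 \W_{\M_0}$ covers $\M_0$; restricting attention to the open dense piece, $\cC_{\M_0}\TT_0$ is dense in $\M_0$. On the other hand, by the analysis in Section \ref{sec:cx} (item in the proof of Theorem \ref{thm:main}), $\cC_0 \L_0 \A_0$ — or rather $\cC_0 \L_0 \H_0$ once the toroidal part is folded in — is open and dense in the appropriate completion. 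Since $\M_0$ normalizes $\NN_0$, $\A_0$ and $\L_0$, and $\TT_0 \subset \M_0$, one can reorganize $\cC_{\M_0}\cC_0 \TT_0 \L_0 \A_0$ by commuting $\TT_0$ past $\L_0$ to land in $Z_{\M_0}(i\tt_0)\cdot(\text{things normalizing the rest})$; combined with the fact that $Z_{\K_0}(\hh_0)$ is generated by $\TT_0$ and the centralizer of $\aa_0$ inside the maximal torus of $\M_0$, one identifies the product with $\cC_{\M_0}\cC_0 \L_0 Z_{\K_0}(\hh_0)$. Openness and density would follow from the openness and density of the factor $\cC_{\M_0}\TT_0\subset \M_0$ together with $\cC_0 \L_0 \A_0$ open dense in $\G_0$ relative to the $\K_0$-part, transported by the normalization relations, exactly mirroring how $\cC\L\H$ was shown to be open dense in $\Sl_\C$ in Lemma \ref{lem:LU-maps}.

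The main obstacle I anticipate is item (i), specifically proving that $\kappa(\L_0\L_{\Im})$ equals $\cC_0\cC_{\M_0}$ as sets (not merely that one contains the other) and that this holds with the product-of-manifolds structure intact — this requires knowing precisely how $\M_0$ normalizes $\NN_0$ and that the imaginary and restricted root spaces assemble without overlap into a copy of $\ll$, which in turn rests on the fine structure of restricted-root-space decompositions over $\aa_0$ versus $\aa$ (the "real/complex/imaginary" trichotomy of roots referenced via \cite[p. 390]{Kn}). The density statement in (ii) is comparatively routine once (i) is in hand, as it reduces to juxtaposing two density facts we have already used. Throughout I would lean on the normalization relations ($\M_0$ normalizes $\uu_0$ and $\kk_0$; $\H$ normalizes $\NN$ and $\L$) and on the functoriality of complexification to transfer the Iwasawa decomposition of $\M_0$ to that of $\M$.
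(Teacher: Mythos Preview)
Your approach to item (i) matches the paper's: factor $l_1 \in \L_0$ and $l_2 \in \L_{\Im}$ via their respective Iwasawa decompositions and use that $\M_0$ normalizes $\U_0$ to commute the $\M_0$-factor past $\U_0$, yielding $\kappa(l_1 l_2) = \kappa(l_1)\,\kappa_\M(l_2)$. One small correction: $\ll_0 + \ll_{\Im}$ is \emph{not} a real form of $\ll$; rather, $\ll_0$ is a real form only of the sum of the non-imaginary negative root spaces, so $\dim_\R(\ll_0+\ll_{\Im})$ falls short of $\dim_\R \ll$ whenever non-imaginary roots exist. This does not damage your argument, which only needs $\ll_0 \cap \ll_{\Im} = 0$.

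The genuine gap is in item (ii), and your assessment that (i) is the main obstacle is inverted. Your density argument rests on ``$\cC_0 \L_0 \A_0$ open dense in $\G_0$ relative to the $\K_0$-part,'' by analogy with Lemma~\ref{lem:LU-maps}. That analogy breaks: in the complex case $\cC\TT$ is dense in $\K$ because the centralizer $\Z_\K(\aa)$ equals the torus $\TT$, whereas for a real form $\cC_0\TT_0$ need not be dense in $\K_0$ once $\mm_0$ is non-abelian (for $\Sl_{\mathbb{H}}$, for instance, $\mm_0$ is a sum of copies of $\su(2)$, and the dimension of $\cC_0\TT_0$ is strictly smaller than that of $\K_0$). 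The paper closes this by invoking the fact, cited from \cite{MT}, that $\cC_0 \M_0$ (equivalently $\M_0 \cC_0$) is open and dense in $\K_0$, noting that some care is needed when $\G_0$ is reductive rather than semisimple, or disconnected. Combining this with your correct observation that $\cC_{\M_0}\TT_0$ is dense in $\M_0$ gives $\cC_{\M_0}\TT_0\cC_0$ dense in $\K_0$; the remainder of your outline---commuting $\TT_0$ past $\cC_0$ via the normalization and identifying $\TT_0 \A_0$ with the centralizer of $\hh_0$---then matches the paper.
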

\begin{proof}
As for an element in $\G_0$ its Iwasawa factorizations in $\G_0$  and $\G$ agree, 
\[\cC_0=\kappa(\L_0)\subset \cC,\quad \kappa:\G\to \K.\]
Since
\[\M_0\subset \K,\quad \exp(i\tt_0)\subset \A,\quad \NN_{\Im}\subset \NN,\]
the Iwasawa factorizations in $\M$ and $\G$ also agree. Therefore
\[\cC_{\M_0}=\kappa(\L_\Im)\subset \cC.\]
Furthermore, if $l_1\in \L_0$, $l_2\in \L_{\Im}$ have Iwasawa factorizations
\[l_1=k_1u_1, \quad l_2=mu_2\]
in  $\G_0$ and $\M$, respectively, then,
since $\M_0$ normalizes $\U_0$, their product has Iwasawa factorization in $\G$
\[l_1l_2=k_1u_1mu_2=k_1mu_3u_2=k_1mu,\]
and this proves (i).

We claim that the multiplication map $\cC_0\times \M_0\to \cC_0\M_0$ is a diffeomorphism onto its image and that
 \[\cC_0\M_0\subset \K_0,\quad \M_0\cC_0\subset \K_0\]
are open and dense subsets. For $\G_0$ semisimple this
is proved in \cite[Section 2]{MT}. In general, $\G_0$ may be not connected, but is a reductive subgroup. Therefore, by Propositions 7.33 and 7.49 in \cite{Kn},
$\M_0$ intersects every connected component of $\K_0$, and thus the result also holds in this generality. This proves the claim.

We apply the previous result also to the reductive group $\M$ to conclude that
\[\cC_{\M_0}\TT_0\subset \M_0\]
is open and dense. Therefore
\[\cC_{\M_0}\TT_0\cC_0\subset \K_0\]
is an open and dense subset, and so $\cC_{\M_0}\TT_0\cC_0\L_0\A_0\subset \G_0$ also is.  We also claim that $\TT_0$ normalizes $\cC_0$. From this, \[\cC_{\M_0}\TT_0\cC_0=\cC_{\M_0}\cC_0\TT_0,\] and the equality
\[\cC_{\M_0}\cC_0\TT_0\L_0\A_0=\cC_{\M_0}\cC_0\L_0Z_{\G_0}(\hh_0).\]
is a consequence of
\[\TT_0\A_0=\Z_{\M_0}(i\tt_0)\Z_{\K_0}(\aa_0)=\Z_{\M_0}(\tt_0)\Z_{\K_0}(\aa_0)=\Z_{\K_0}(\hh_0),\]
and this proves (ii). Note that $\cC_{\M_0}\cC_0\subset \K_0\cap \cC$ is open and dense.

To prove the claim
we use that $\TT_0\subset \K_0$ and that  it normalizes $\L_0$ and $\U_0$, since $\TT_0\subset \M_0$. Therefore,
if $l\in \L_0$ has Iwasawa factorization
in $\G_0$, $l=ku$,
and then $tlt^{-1}\in \L_0$ with Iwasawa factorization
$tlt^{-1}=(tkt^{-1})(tut^{-1})$.
\end{proof}

\subsection{Weyl groups}

We   discuss the relationship among several Weyl groups associated with the data of compatible Iwasawa factorizations.
This is used to parametrize the elements in $\cO_0\cap \hh_0$ in Theorem \ref{thm:relative}.

\medskip

To  the complex Lie group with fixed Cartan subalgebra$(\G,\hh)$ one  associates the Weyl group of $(\G,\hh)$
\[\NN_\G(\hh)/\Z_\G(\hh).\]
If we regard $\G$ as a real Lie group, then to the fixed Iwasawa factorization one associates the Weyl group of $(\G,\aa)$, \[\W=\NN_\K(\aa)/\Z_\K(\aa).\]
These two quotient groups are canonically isomorphic, 
$
 \NN_\G(\hh)/\Z_\G(\hh)\to\W
$.

\medskip

To the real form $\G_0$ with fixed Iwasawa factorization one can associate both
\begin{itemize}
 \item  the canonically  isomorphic quotient groups
\[\NN_{\G_0}(\hh_0)/\Z_{\G_0}(\hh_0)\to \NN_{\K_0}(\hh_0)/\Z_{\K_0}(\hh_0),\]
\item and the Weyl group of $(\G_0,\aa_0)$
\[\W_0=\NN_{\K_0}(\aa_0)/\Z_{\K_0}(\aa_0).\]
\end{itemize}

The isomorphic quotient groups sit in Weyl group $\W$.
\begin{lemma}\label{lem:inclusion-Weyl} The inclusion $\NN_{\K_0}(\hh_0)\subset \NN_\K(\aa)$
 induces  the injection
 \[\NN_{\K_0}(\hh_0)/\Z_{\K_0}(\hh_0)\to \W.\]
\end{lemma}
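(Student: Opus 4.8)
The plan is to show that the natural map $\NN_{\K_0}(\hh_0)/\Z_{\K_0}(\hh_0)\to \W=\NN_\K(\aa)/\Z_\K(\aa)$ induced by inclusion is well defined and injective. First I would verify that the inclusion $\NN_{\K_0}(\hh_0)\subset \NN_\K(\aa)$ claimed in the statement actually holds. Since $\hh_0$ is maximally non-compact with $\aa_0=\hh_0\cap\pp_0$, and $\aa=i\tt_0\oplus\aa_0$ where $\tt_0$ is a maximal abelian subalgebra of $\mm_0=\zz_{\kk_0}(\aa_0)$, an element $k\in \NN_{\K_0}(\hh_0)$ normalizes $\hh_0$; because $\theta$-invariance of $\hh_0$ gives $\hh_0=\tt_0\oplus\aa_0$ (the $\pm1$ eigenspace decomposition, noting $\mm_0\cap\hh_0=\tt_0$), and $k$ commutes with $\theta$, it must preserve each eigenspace, hence normalize $\tt_0$ and $\aa_0$ separately. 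Therefore $k$ normalizes the complexification $i\tt_0\oplus\aa_0=\aa$. This gives the inclusion of normalizers, and hence a group homomorphism to $\W$ once I check it descends to the quotient.

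Next I would check that the map is well defined on cosets, i.e. that $\Z_{\K_0}(\hh_0)$ lands in $\Z_\K(\aa)$. This is immediate: if $k\in\K_0$ centralizes $\hh_0=\tt_0\oplus\aa_0$, then $k$ centralizes $\tt_0$ and $\aa_0$, hence centralizes the complex span $\aa$. So we get a homomorphism $\NN_{\K_0}(\hh_0)/\Z_{\K_0}(\hh_0)\to\W$.

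For injectivity, suppose $k\in\NN_{\K_0}(\hh_0)$ maps to the identity in $\W$, i.e. $k\in\Z_\K(\aa)$. Then $k$ centralizes $\aa=i\tt_0\oplus\aa_0$, so in particular $k$ centralizes $\aa_0$, which means $k\in\Z_{\K_0}(\aa_0)=\M_0$; moreover $k$ centralizes $i\tt_0$, hence $\tt_0$. Thus $k$ centralizes $\tt_0\oplus\aa_0=\hh_0$, so $k\in\Z_{\K_0}(\hh_0)$ and the coset of $k$ is trivial. This completes the proof. The only subtle point — which I expect to be the main obstacle to present cleanly — is the identification $\hh_0=\tt_0\oplus\aa_0$ as the $\theta$-eigenspace decomposition of $\hh_0$, and correspondingly $\aa=i\tt_0\oplus\aa_0$: this relies on condition (d) (so that $\aa_0$ is maximal abelian in $\pp_0$) together with the standard construction in the Appendix of how $\aa$ arises from $\aa_0$ and $\tt_0$. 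Once that structural fact is in hand, the rest is a routine chase through centralizers and normalizers, using only that $k$ commutes with $\theta$ and preserves the real and imaginary parts of $\hh$.
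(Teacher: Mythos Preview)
Your proof is correct and follows essentially the same route as the paper's. The only cosmetic difference is that the paper argues via the complexification $\hh=\hh_0\oplus i\hh_0$ and its vector part $\aa=\hh\cap\pp$, rather than via the finer decomposition $\hh_0=\tt_0\oplus\aa_0$, $\aa=i\tt_0\oplus\aa_0$ that you use; both amount to the same $\theta$-eigenspace bookkeeping, and the paper's formulation sidesteps the ``subtle point'' you flag since it does not need the identification $\aa=i\tt_0\oplus\aa_0$.
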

\begin{proof} If $k\in \NN_{\K_0}(\hh_0)$, then it normalizes $\hh$ and thus it also normalizes its vector part $\aa$. This explains the inclusion of normalizers in the statement. For centralizers the argument is simpler, producing a well defined map between quotients.  The image of $k$ in $\W$ is trivial if and only if
$k\in \Z_\K(\aa)$. This implies that $k\in \Z_\K(\hh)$, and, because $k\in \K_0$, that
$k\in \Z_{\K_0}(\hh_0)$. Therefore the map is injective.
\end{proof}

We relate the two quotient groups for $\G_0$ by means of the Weyl group of $(\M_0,i\tt_0)$,
\[\W_{\M_0}=\NN_{\M_0}(i\tt_0)/\Z_{\M_0}(i\tt_0).\]

\begin{lemma}\label{lem: Weyl short exact} The  inclusion $\NN_{\K_0}(\hh_0)\to \NN_{\K_0}(\aa_0)$ induces the short exact sequence
 \[1\to \W_{\M_0}\to \NN_{\K_0}(\hh_0)/\Z_{\K_0}(\hh_0)\to \W_0\to 1.\]
\end{lemma}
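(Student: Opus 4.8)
The plan is to exhibit the three-term sequence directly. First I would establish that the map $\NN_{\K_0}(\hh_0)\to \NN_{\K_0}(\aa_0)$ is well defined: if $k\in\K_0$ normalizes $\hh_0=\tt_0\oplus\aa_0$ and commutes with $\theta$ (which it does, being in $\K_0$), then $\Ad(k)$ preserves the $\theta$-eigenspace decomposition of $\hh_0$, hence preserves both $\tt_0$ and $\aa_0$. In particular $k\in\NN_{\K_0}(\aa_0)$, and clearly $k$ centralizes $\aa_0$ if it centralizes $\hh_0$, so there is an induced group homomorphism $q:\NN_{\K_0}(\hh_0)/\Z_{\K_0}(\hh_0)\to\W_0$. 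I would then check that $q$ is surjective: given $w\in\W_0$ represented by $n\in\NN_{\K_0}(\aa_0)$, conjugation by $n$ sends $\hh_0$ to another maximally non-compact Cartan subalgebra of $\gg_0$ containing $\aa_0$ (since $\aa_0$ is fixed), and by the standard conjugacy of such Cartan subalgebras \emph{within} $\M_0=\Z_{\K_0}(\aa_0)$ — more precisely, $n\hh_0 n^{-1}$ and $\hh_0$ are two maximal abelian subalgebras of $\aa_0\oplus\mm_0$ containing $\aa_0$, equivalently $n\tt_0n^{-1}$ and $\tt_0$ are two maximal abelian subalgebras of $\mm_0$, hence conjugate by some $m\in\M_0$ (connectedness of $\M_0^\circ$ suffices for this, or one uses that $\M_0$ meets every component) — there is $m\in\M_0$ with $mn\in\NN_{\K_0}(\hh_0)$, and $mn$ represents $w$ since $m$ acts trivially on $\aa_0$. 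This gives surjectivity.

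Next I would identify the kernel of $q$ with $\W_{\M_0}$. An element $k\in\NN_{\K_0}(\hh_0)$ lies in the kernel precisely when $\Ad(k)$ acts trivially on $\aa_0$, i.e.\ $k\in\M_0$; since $k$ also normalizes $\hh_0$ and acts trivially on $\aa_0$, it must normalize the complement $\tt_0$, so $k\in\NN_{\M_0}(\tt_0)=\NN_{\M_0}(i\tt_0)$ (normalizing $\tt_0$ and normalizing $i\tt_0$ are the same condition). Conversely any such $k$ normalizes $\hh_0=\tt_0\oplus\aa_0$. Thus the preimage of the identity in $\NN_{\K_0}(\hh_0)/\Z_{\K_0}(\hh_0)$ is the image of $\NN_{\M_0}(i\tt_0)$. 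Finally one checks $\Z_{\K_0}(\hh_0)=\Z_{\M_0}(i\tt_0)$: an element centralizing $\hh_0$ centralizes $\aa_0$, hence lies in $\M_0$, and centralizing $\tt_0$ (equivalently $i\tt_0$) is the remaining condition; conversely $\Z_{\M_0}(i\tt_0)=\Z_{\M_0}(\tt_0)\subset\Z_{\K_0}(\aa_0\oplus\tt_0)=\Z_{\K_0}(\hh_0)$. Therefore the kernel is canonically $\NN_{\M_0}(i\tt_0)/\Z_{\M_0}(i\tt_0)=\W_{\M_0}$, and the sequence
\[
1\to \W_{\M_0}\to \NN_{\K_0}(\hh_0)/\Z_{\K_0}(\hh_0)\xrightarrow{\ q\ }\W_0\to 1
\]
is exact.

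The main obstacle is the surjectivity step, which hinges on the internal conjugacy statement: two maximal abelian subalgebras of $\mm_0$ are conjugate under $\M_0$. This is the place where one must be careful that $\M_0$ need not be connected. I would invoke the same structural input used earlier in the Appendix (Propositions 7.33 and 7.49 of \cite{Kn}, already cited in the proof of Lemma \ref{lem:Iwasawa-compatible}): $\M_0$ is a compact — hence reductive — group, $\M_0^\circ$ acts transitively on its maximal tori, and this is enough because a representative $n\in\NN_{\K_0}(\aa_0)$ of $w\in\W_0$ only needs adjusting by an element of $\M_0$, not of $\M_0^\circ$. Everything else is bookkeeping with the $\theta$-stable decomposition $\hh_0=\tt_0\oplus\aa_0$ and the observation that for elements of $\K_0$, commuting with $\theta$ forces the preservation of that decomposition.
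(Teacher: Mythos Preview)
Your proposal is correct and follows essentially the same approach as the paper's proof: both arguments establish the inclusion $\NN_{\K_0}(\hh_0)\subset\NN_{\K_0}(\aa_0)$ via the $\theta$-stable splitting $\hh_0=\tt_0\oplus\aa_0$, obtain surjectivity by conjugating $n\tt_0 n^{-1}$ back to $\tt_0$ using conjugacy of maximal tori in the compact group $\M_0$, and identify the kernel as $\NN_{\M_0}(i\tt_0)/\Z_{\M_0}(i\tt_0)$. Your write-up is in fact more careful than the paper's on two points---you explicitly verify $\Z_{\K_0}(\hh_0)=\Z_{\M_0}(i\tt_0)$, and you flag the (non-)issue of connectedness of $\M_0$---but the underlying strategy is identical.
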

\begin{proof}
The inclusion between normalizers is again a consequence of the invariance of toroidal and vector components.
To show that the map induced on quotients is onto we use that $\tt_0$ is a Cartan subalgebra of $\M_0=Z_{\K_0}(\aa_0)$.
If $k\in \NN_{\K_0}(\aa_0)$ is such that $\tt_0\neq \tt_0^k$, then since all Cartan subalgebras of a compact Lie algebra are conjugated, we can always find $m\in \M_0$ such that
$\tt_0^{gm}=\tt_0$.

By construction, the kernel of the map induced on quotients are those elements in $\NN_{\K_0}(\hh_0)/\Z_{\K_0}(\hh_0)$ any of whose representatives $k\in \NN_{\K_0}(\hh_0)$ has fixed-point set containing $\aa_0$. But this means that $k\in\M_0$ as well,
\[k\in \NN_{\K_0}(\hh_0)\cap \M_0=\NN_{\M_0}(i\tt_0).\]
Therefore the kernel  equals
$\NN_{\M_0}(i\tt_0)/\Z_{\M_0}(i\tt_0)=\W_{\M_0}$.
\end{proof}

\end{document}